\newcommand{\GL}{\mathrm{GL}}
\newcommand{\M}{\mathrm{M}}
\newcommand{\im}{\mathrm{Im}}
\newcommand{\upchi}{\raisebox{0.4ex}{$\chi$}}
\newcommand{\Char}{\upchi}
\newcommand{\Z}{\mathbb{Z}}
\newcommand{\R}{\mathscr{O}_2}
\newcommand{\Fq}{\mathbb{F}_q}
\newcommand{\Zen}{\mathscr{Z}}
\newtheorem{lemma}{Lemma}[section]
\newtheorem{proposition}[lemma]{Proposition}
\newtheorem{corollary}[lemma]{Corollary}
\theoremstyle{definition} 
\newtheorem{definition}[lemma]{Definition}
\newtheorem{example}[lemma]{Example}
\newtheorem{remark}[lemma]{Remark}
\newtheorem{question}{Question}
\title[Power maps on $\mathrm{GL}_n(\mathscr O_2)$]{Power maps on  General Linear groups over finite principal ideal local rings of length two}
\date{\today}
\author[Panja]{Saikat Panja\orcidlink{0000-0002-9639-3122}}
\email[(Panja)]{panjasaikat300@gmail.com}
\address{Indian Statistical Institute, Bengaluru Centre, 8th Mile, Mysore Rd, RVCE Post, Gnana Bharathi, Bengaluru, Karnataka 560059, India}
\author[Roy]{Ayon Roy}
\email[(Roy)]{ayonroy1999@gmail.com}
\address{Indian Institute of Science Education and Research Pune, Dr. Homi Bhabha Road, Pashan, Pune 411 008, India}
\author[Singh]{Anupam Singh}
\email[(Singh)]{anupamk18@gmail.com}
\address{Indian Institute of Science Education and Research Pune, Dr. Homi Bhabha Road, Pashan, Pune 411 008, India}
\thanks{Panja is supported by an NBHM postdoctoral fellowship, file number ending at R\&D-II/6746. Roy is supported by an IISER Pune PhD Fellowship. Singh is supported by an NBHM research grant 02011/23/2023/NBHM(RP)/RDII/5955}
\date{\today}
\dedicatory{We dedicate this paper to Maneesh Thakur for his wonderful mathematics.}
\subjclass[2020]{20G40, 20D06, 15A21, 20G25}
\keywords{word maps, General linear group over local ring, power map, conjugacy classes, canonical forms, generating functions}
\begin{document}
\begin{abstract}
Word maps have been studied for matrix groups over a field. We initiate the study of problems related to word maps in the context of the group $\mathrm{GL}_n(\mathscr O_2)$, where $\mathscr O_2$ is a finite local principal ideal ring of length two (e.g.  $\mathbb{Z}/p^2\mathbb{Z}$ and $\mathbb F_q[t]/\langle t^2\rangle$). We study the power map $g\mapsto g^L$, where $L$ is a positive integer. We consider $L$ to be coprime to $p$ (an odd prime), the characteristic of the residue field $k$ of $\mathscr O_2$. We classify all the elements in the image, whose mod-$\mathfrak m$ reduction in $\mathrm{GL}_n(k)$ are either regular semisimple or cyclic, where $\mathfrak m$ is the unique maximal ideal of $\mathscr O_2$. 
Our main tool is a Hensel lifting for polynomial equations over $\mathrm{M}_n(\mathscr O_2)$, which we establish in this work. 

A central contribution of this work is the construction of canonical forms for certain natural classes of matrices over $\mathscr O_2$. As applications, we derive explicit generating functions for the probabilities that a random element of $\mathrm{GL}_n(\mathscr O_2)$ is regular semisimple, $L$-power regular semisimple, compatible cyclic, or $L$-power compatible cyclic.
\end{abstract}
\maketitle
\tableofcontents

\section{Introduction}\label{sec:intro}
In 1951, {\O}ystein Ore proved that every element of the alternating group $A_n$ is a commutator and conjectured that the same holds for all finite non-abelian simple groups; see \cite{Ore1951}. In the early 1960s, R.~C.~Thompson verified the conjecture for the groups $\mathrm{PSL}_n(q)$, and further progress was made by Gow and O.~Bonten in related cases; see \cite{Thompson1961, Thompson1962}. In 1984, J.~Neubüser, H.~Pahlings, and E.~Cleuvers confirmed the conjecture for the sporadic simple groups. This was followed in 1993 by Bonten’s result for all exceptional groups of Lie rank at most $4$; see \cite{Bonten1993}. In 1998, E.~W.~Ellers and N.~L.~Gordeev proved the conjecture for all finite simple groups of Lie type over $\mathbb{F}_q$, assuming $q \geq 8$. The conjecture was finally settled in its entirety in 2010 by Liebeck, O'Brien, Shalev, and Tiep \cite{LiebeckBrienShalev10}, through the use of advanced tools from the character theory of finite groups of Lie type and asymptotic group theory.

This, and the Waring problem, opened up a whole new world of investigations, namely the question of finding images of \emph{word maps} on several groups, both finite and infinite. Given a \emph{word} $w$, an element of the free group $F_r$ on $r$ generators, and a group $G$, one defines a map $\widetilde{w} \colon G^r \rightarrow G$ by evaluation. Two fundamental questions being extensively studied in this subject are (a) what is the image of $\widetilde{w}$ on $G$, i.e., describe the set $\widetilde{w}(G^r) = \{g\in G\mid \text{there exist } h_1,\ldots, h_r \in G, \widetilde{w}(h_1, \ldots, h_r) = g\}$, and (b) does there exist $k_w$, a positive integer, such that the subgroup $\langle\widetilde{w}(G^r)\rangle = \widetilde{w}(G^r)^{k_w}$; where for a subset $S\subseteq G$, $S^\ell=\{s_1s_2\cdots s_\ell \mid s_i\in S\}$. Hereafter, the set $\widetilde{w}(G^r)$ is denoted by $w(G)$, as is the tradition in the subject. In subsequent years, results on word maps — particularly for groups of Lie type (not necessarily simple) — have primarily focused on the case over fields. As a full account of related work is beyond the scope of this article, we include a few selected references and acknowledge that many important contributions remain unmentioned. Borel in 1983 (independently by Larsen in 2004; \cite{Larsen2004}) proved that, given a semisimple algebraic group $G$ and a word map $w\colon G^r \rightarrow G$, it is a dominant map (that is, the image is dense in $G$ for the Zariski topology), and hence one gets that $w(G)^2 = G$; see \cite{Borel1983}.

The results of Liebeck, O'Brien, Shalev, and Tiep \cite{LiebeckBrienShalev10} were extended to full generality in \cite{Shalev2009}, which states that if $w\neq 1$ is a non-trivial group word, there exists $N(w)$ such that for every non-abelian finite simple group $G$ with $|G|> N(w)$ we have $w(G)^3 = G$, extending the results of \cite{LiebeckShalev2001}. The number $3$ was further reduced to $2$ in \cite{LarsenShalevTiep2011Annals}, generalizing the findings about $A_n$, the alternating group, in \cite{LarsenShalev2009JAMS}. The result $w(G)^2 = G$ is the best possible, as for the word $w = x_1^2$, $w(A_5)\neq A_5$. We must mention that the Waring-like problems have been studied in the case of Lie groups and Chevalley groups in \cite{HuiLarsenShalev15}, unipotent algebraic groups in \cite{Larsen19unipotent}, residually finite groups in \cite{LarsenShalev18residual}, $p$-adic and Adelic groups in \cite{AvniShalev13}, discrete group $\mathrm{SL_n(\Z)}$ in \cite{AvniMeiri2019} etc.

In recent times, the image of power maps on classical groups has attracted attention within statistical group theory. The investigation began with the finite general linear groups in \cite{KunduSinghGL}, and was subsequently extended to orthogonal and symplectic groups in \cite{PanjaSinghSymplectic25}, and to unitary groups in \cite{PanjaSinghUnitary24}. Related asymptotic results on the distribution of powers among regular semisimple, semisimple, and regular elements in finite reductive groups are found in \cite{KulshresthaKunduSingh2022}. Questions concerning the fibers of such maps, which naturally follow from studying their images, are explored in \cite{panja2024a}. Power maps —particularly the squaring map — also play a role in enumerating real conjugacy classes (those containing elements conjugate to their inverses), see \cite{panja2024b}. Moreover, they provide examples of word maps with dense \emph{image ratios}; for further details, see \cite{panja2024c}. For a survey regarding the study of power maps on other groups, one can see \cite{PanjaSingh2024Survey}. We also note that power maps, in the context of Lie groups and algebraic groups, are closely tied to the property of the group being exponential; this relationship has been studied by Chatterjee and Steinberg (see \cite{Chatterjee2002,Chatterjee2003,Steinberg2003}).

As discussed above, the word problems, in particular the power map, have been studied extensively on matrix groups over fields. We aim to extend this line of inquiry by addressing a basic yet fundamental question in a broader setting:
\begin{question}
Let $\R$ be a finite local principal ideal ring of length two, and its residue field $k$ is of characteristic $p$. For $L>0$, an integer, consider the power map $\Phi_L \colon \GL_n(\R) \rightarrow \GL_n(\R)$ given by $x \mapsto x^L$. What is the image of $\Phi_L$? One can further ask this question for other matrix groups. 
\end{question}
We address the above question for the classes of regular semisimple and cyclic elements of $\GL_n(\R)$, under the assumption that $\gcd(L, p) = 1$; see the relevant sections for the definitions of regular semisimple and cyclic elements. Our approach is to exploit the known result over the field by going modulo the maximal ideal to the residue field, and use Hensel lifting. Note that the complication here is at several levels, as the canonical form for the matrix may not lift in a compatible fashion, and the uniqueness of the polynomial invariants of the similarity classes might not behave well, either. Of all the results presented in this article, the following four constitute the main contributions.

\begin{restatable}{theorem}{theoremone}\label{thm:reg-sem-poly}
Let $\R$ be a finite local principal ideal ring of length two, and the corresponding residue field $k$ has characteristic $p$, an odd prime. Let $L>0$ be an integer coprime to $p$. Then, a regular semisimple element $A\in \GL_n(\R)$ is an $L$-th power if and only if each fundamental irreducible factor of the characteristic polynomial $\Char_{\R, A}(t)$ is an $L$-power polynomial.
\end{restatable}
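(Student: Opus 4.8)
The plan is to reduce the global condition ``$A$ is an $L$-th power'' to a product of local conditions, one for each fundamental factor, and then to detect each local condition on the residue field, where the coprimality $\gcd(L,p)=1$ renders the contribution of the maximal ideal invisible. Throughout write $q=|k|$.

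First I would use the Hensel lifting for polynomial equations over $\M_n(\R)$ to factor the characteristic polynomial. Since $A$ is regular semisimple, its reduction $\bar A\in\GL_n(k)$ is regular semisimple, so $\Char_{k,\bar A}(t)=\prod_i\bar f_i(t)$ is a product of pairwise coprime irreducibles; lifting this coprime factorization yields $\Char_{\R,A}(t)=\prod_i f_i(t)$ over $\R$, whose factors are the fundamental irreducible factors. Because $A$ is cyclic, $\R^n\cong\R[t]/(\Char_{\R,A}(t))$ as an $\R[A]$-module, and the Chinese Remainder Theorem gives $\R[A]\cong\prod_i R_i$ with $R_i:=\R[t]/(f_i(t))$. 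Each $R_i$ is a finite local ring of length two, unramified over $\R$, with residue field $\mathbb{F}_{q^{d_i}}$ where $d_i=\deg f_i$, and the centralizer of $A$ is $\R[A]^{\times}\cong\prod_i R_i^{\times}$.

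Next I would reduce being an $L$-th power to the factors. If $A=B^L$ then $B$ commutes with $A$, hence $B\in\R[A]^{\times}\cong\prod_i R_i^{\times}$; conversely every element of $\prod_i R_i^{\times}$ is a polynomial in $A$ and so lies in $\GL_n(\R)$. Writing $\alpha_i\in R_i^{\times}$ for the image of $A$ (a root of $f_i$), it follows that $A$ is an $L$-th power if and only if each $\alpha_i$ lies in $(R_i^{\times})^L$. The local step then exploits the hypotheses: the unit group sits in a split exact sequence $1\to 1+\mathfrak m_i\to R_i^{\times}\to\mathbb{F}_{q^{d_i}}^{\times}\to 1$, the splitting following from coprimality of the orders $q^{d_i}$ and $q^{d_i}-1$. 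As $\mathfrak m_i^2=0$, the kernel $1+\mathfrak m_i$ is a $p$-group, and since $\gcd(L,p)=1$ the $L$-th power map is a bijection on it; hence $\alpha_i\in(R_i^{\times})^L$ if and only if its residue $\bar\alpha_i$ lies in $(\mathbb{F}_{q^{d_i}}^{\times})^L$. By the finite-field theory \cite{KunduSinghGL}, $\bar\alpha_i$ is an $L$-th power precisely when $\bar f_i$ is an $L$-power polynomial over $k$, and transferring this across the lift matches it with $f_i$ being an $L$-power polynomial over $\R$, which closes both directions.

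The main obstacle I anticipate is not the local group theory, which is essentially forced by $\gcd(L,p)=1$, but the bookkeeping over the non-domain $\R$: making precise the notions of fundamental irreducible factor and of $L$-power polynomial over $\R$, and proving they are independent of the chosen Hensel lift. This is exactly the compatibility issue flagged in the introduction, since the canonical form and the polynomial invariants of a similarity class need not lift uniquely a priori; the crux is therefore to show that the $L$-power property descends to and lifts from the residue field without ambiguity, so that the last equivalence in the previous paragraph is genuinely well defined.
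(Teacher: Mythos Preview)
Your argument is correct and runs parallel to the paper's, but the key lifting step is packaged differently. Both proofs begin identically: factor $\Char_{\R,A}$ into pairwise coprime fundamental irreducibles, use CRT to block-diagonalize, and identify the centralizer of $A$ with $\R[A]^\times\cong\prod_i R_i^\times$. Where you diverge is in the local step. The paper establishes a matrix Hensel lemma (its Proposition~4.4): given $\widetilde B\in\GL_n(k)$ with $\widetilde B^L=\bar A$ and $L\widetilde B^{L-1}$ invertible, one lifts $\widetilde B$ to $B_0\in\R[A]$ and then corrects by an explicit Newton step $B=B_0+\pi D$ inside $\R[A]$. You instead observe directly that each $R_i$ is a length-two local ring with residue field $\mathbb F_{q^{d_i}}$, that $1+\mathfrak m_i$ is a $p$-group on which the $L$-th power map is bijective, and hence that $\alpha_i\in(R_i^\times)^L$ iff $\bar\alpha_i\in(\mathbb F_{q^{d_i}}^\times)^L$. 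These are the same fact in different clothing---the Newton step works precisely because multiplication by $L$ is invertible on $\mathfrak m_i$---but your formulation is cleaner for the problem at hand, while the paper's Proposition~4.4 is stated for a general polynomial $F$ and so is reusable elsewhere. (Note that the splitting of the exact sequence, while true, is not needed: bijectivity of $x\mapsto x^L$ on $1+\mathfrak m_i$ alone suffices to lift $L$-th roots.)

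Your flagged obstacle---the equivalence ``$\bar f_i$ is an $L$-power polynomial over $k$'' $\Leftrightarrow$ ``$f_i$ is an $L$-power polynomial over $\R$''---is handled in the paper as Proposition~4.6 and is indeed routine once $\gcd(L,p)=1$ forces $\bar f_i(t^L)$ to be separable: a degree-$d_i$ irreducible factor of $\bar f_i(t^L)$ is coprime to its cofactor and lifts by Hensel to a fundamental irreducible factor of $f_i(t^L)$ of the same degree, and the converse is immediate by reduction. The uniqueness of the fundamental irreducible factorization of $\Char_{\R,A}$ (needed for the statement to be well posed) is the paper's Lemma~2.5; you should invoke it explicitly rather than leave it implicit in the phrase ``lifting this coprime factorization''.
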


\begin{restatable}{theorem}{theoremtwo}\label{thm:cyc}
Let $\R$ be a finite local principal ideal ring of length two, and the corresponding residue field $k$ has characteristic $p$, an odd prime.  
Let $L>0$ be an integer coprime to $p$. Then, a compatible cyclic element $A\in \GL_n(\R)$ is an $L$-th power if and only if each fundamental irreducible factor of $\Char_{\R, A}(t)$ is an $L$-power polynomial. 
\end{restatable}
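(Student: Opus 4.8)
The plan is to mirror the strategy of \Cref{thm:reg-sem-poly}: transport the problem over $\R$ to the residue field $k$, solve it there, and lift, while taking care of the non-semisimple primary blocks that a cyclic element may carry. The organizing observation is that for a compatible cyclic $A$ the module $\R^n$ is a free cyclic $\R[A]$-module, so its centralizer is the commutative algebra $C_{\M_n(\R)}(A) = \R[A] \cong \R[t]/(\Char_{\R,A}(t))$, with $A$ corresponding to multiplication by $t$. Since any $B$ with $B^L = A$ automatically commutes with $A$, such a $B$ already lies in this commutative algebra. Hence $A$ is an $L$-th power in $\GL_n(\R)$ if and only if the class of $t$ is an $L$-th power in the unit group $\left(\R[t]/(\Char_{\R,A}(t))\right)^{\times}$, and no non-commuting $L$-th roots can occur. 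First I would establish this reduction carefully, invoking the canonical forms and the compatibility hypothesis to be sure the centralizer is exactly $\R[A]$ and that the $\R[A]$-module $\R^n$ is genuinely cyclic.

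Next I would split the commutative algebra into primary pieces. Reducing $\Char_{\R,A}(t)$ modulo $\mathfrak m$ gives $\prod_i \bar f_i(t)^{e_i}$ with the $\bar f_i$ distinct and irreducible over $k$; as the powers $\bar f_i^{e_i}$ are pairwise coprime, Hensel lifting produces a factorization $\Char_{\R,A}(t) = \prod_i F_i(t)$ into pairwise coprime monic fundamental factors with $F_i \equiv \bar f_i^{e_i} \pmod{\mathfrak m}$. Equivalently, the finite commutative ring $\R[t]/(\Char_{\R,A}(t))$ is a product of its local components. By the Chinese Remainder Theorem, $\R[t]/(\Char_{\R,A}(t)) \cong \prod_i R_i$ with $R_i := \R[t]/(F_i)$, and $A$ is an $L$-th power if and only if the image $t_i$ of $t$ in each $R_i^{\times}$ is an $L$-th power; this isolates the analysis to a single fundamental factor.

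Each $R_i$ is a finite local ring with residue field $\mathbb F_{q^{d_i}}$, where $d_i = \deg \bar f_i$, and maximal ideal $\mathfrak n_i$. Since $|R_i|$ is a power of $p$, its unit group splits as $R_i^{\times} \cong \mathbb F_{q^{d_i}}^{\times} \times (1 + \mathfrak n_i)$, the factor $1 + \mathfrak n_i$ being a $p$-group. Because $\gcd(L,p)=1$, raising to the $L$-th power is a bijection on $1 + \mathfrak n_i$ — this is exactly the Hensel lifting of $L$-th roots specialized to the commutative centralizer, where the differential of $X \mapsto X^L$ at a lift $\bar B$ of the residue-level root is multiplication by the unit $L\bar B^{L-1}$. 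Consequently $t_i$ is an $L$-th power in $R_i^{\times}$ if and only if its residue $\alpha_i \in \mathbb F_{q^{d_i}}^{\times}$, a root of $\bar f_i$, lies in $(\mathbb F_{q^{d_i}}^{\times})^L$; by definition this says precisely that $\bar f_i$, equivalently the fundamental factor $F_i$, is an $L$-power polynomial. Note that any $L$-th root of $\alpha_i$ generates the same extension $\mathbb F_{q^{d_i}}$, so the cyclic structure is preserved on both sides. Assembling the equivalences over all $i$ then yields the theorem, and simultaneously realizes the ``reduce modulo $\mathfrak m$ and lift'' scheme.

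The main obstacle I anticipate is the first step. Over a field the identity $C(A) = k[A]$ for a cyclic matrix is classical, but over the length-two ring $\R$ one must verify that a compatible cyclic $A$ really makes $\R^n$ a free cyclic $\R[A]$-module with $A$ acting as multiplication by $t$, so that the centralizer is the full commutative local algebra; this is where the compatibility hypothesis and the earlier canonical-form results are essential. A secondary delicate point, absent in the regular semisimple case where every $e_i = 1$, is that a cyclic element permits $e_i > 1$: the extra nilpotence then lives inside the residue-level factor $k[t]/(\bar f_i^{e_i})$ and must be absorbed into the $p$-group $1 + \mathfrak n_i$, which is exactly why the coprimality $\gcd(L,p)=1$ suffices to dispatch these higher blocks uniformly.
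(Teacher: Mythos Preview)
Your approach is correct and takes a genuinely different route from the paper's. The paper proceeds constructively: it first establishes an explicit Jordan-type canonical form $J_{\R,F}(r)$ for each primary block (\Cref{prop-canonical-form-type}, resting on the delicate Hensel-type lifting of \Cref{lem-Hensel-type-5}), and then, for each primary piece, builds an explicit $L$-th root as the block lower-triangular matrix with an $L$-th root $D$ of $C_F$ on the diagonal and identities on the subdiagonal, verifying by a direct computation of $B^L$ and its minimal polynomial over $k$ that $B^L\sim_{\R} J_{\R,F}(r)$ (\Cref{prop:cyclic-single-poly}). The primary pieces are then reassembled via CRT.

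Your argument bypasses the canonical form entirely. You use only that $\R^n$ is a cyclic $\R[t]$-module with principal null ideal (\Cref{lem:cyclic-down-up} and \Cref{lem:null-ideal-prin}), so that $\Zen_{\M_n(\R)}(A)=\R[A]\cong\R[t]/(\Char_{\R,A}(t))$; then the question becomes whether $t$ is an $L$-th power in the unit group of this finite commutative ring. After CRT, the structure $R_i^\times\cong\mathbb F_{q^{d_i}}^\times\times(1+\mathfrak n_i)$ with $1+\mathfrak n_i$ a $p$-group immediately isolates the obstruction in the Teichm\"uller factor, and the condition $\gcd(L,p)=1$ is exactly what makes the $p$-group factor transparent. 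This is shorter and more conceptual, and it explains cleanly why the coprimality hypothesis is the right one; the paper's route, by contrast, produces the canonical form and an explicit $L$-th root, both of which are of independent interest and are reused in the counting arguments of \Cref{sec:counting}. One small point: your phrase ``by definition'' for the equivalence between $\alpha_i\in(\mathbb F_{q^{d_i}}^\times)^L$ and $\bar f_i$ (hence its fundamental irreducible lift) being an $L$-power polynomial is really a short lemma rather than the literal definition, but it is standard and the paper establishes the needed link in \Cref{prop:reg-sem-baby-case}.
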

\noindent In both of the theorems above, the statements are analogous to the theorems for a field. Recall that in the field case, the criteria for an invertible matrix to be $L$-th power are given in terms of the corresponding polynomial invariants being $L$-th power.

\begin{restatable}{theorem}{theoremthree}\label{thm:gen-fun}
Let $\R$ be a finite local principal ideal ring of length two, with unique maximal ideal $\mathfrak{m}$. Let $p$ be the characteristic of its residue field $k$, which is an odd prime. Fix an integer $L > 0$ such that $\gcd(L, p) = 1$. Let $s_n$ denote the probability that a randomly chosen element of $\GL_n(\R)$ is regular semisimple, and let $s_{n, L}$ denote the probability that a randomly chosen element of $\GL_n(\R) \cap \im(\Phi_L)$ is regular semisimple. Then the generating functions for these probabilities admit the following factorization:
\begin{align}
1+\sum\limits_{n=1}^{\infty} s_nz^n& = \prod_{d=1}^{\infty}\left(1+\dfrac{z^d}{|\mathfrak{m}|^d(q^d-1)}\right)^{|\mathfrak m|^d N(q,d)},\\
1+\sum\limits_{n=1}^{\infty} s_{n,L}z^n& = \prod_{d=1}^{\infty}\left(1+\dfrac{z^d}{|\mathfrak{m}|^d(q^d-1)}\right)^{N_{\R,L}(q,d)},
\end{align}
where $q$ is the order of the residue field.
\end{restatable}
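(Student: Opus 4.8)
The plan is to express each generating function as an Euler product indexed by fundamental irreducible factors, using the classical ``sum of reciprocal centralizer orders'' device. First I would rewrite the probabilities as class sums. Since the similarity class of $A$ has $|\GL_n(\R)|/|Z(A)|$ elements, where $Z(A)$ denotes its centralizer in $\GL_n(\R)$, dividing by $|\GL_n(\R)|$ gives
\[
s_n \;=\; \sum_{[A]} \frac{1}{|Z(A)|},
\]
the sum ranging over the regular semisimple similarity classes in $\GL_n(\R)$; likewise $s_{n,L}$ arises the same way upon restricting to those regular semisimple classes that additionally lie in $\im(\Phi_L)$.

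Next I would parametrize these classes and compute centralizers. Invoking the canonical forms for regular semisimple matrices over $\R$ established earlier, such a class is determined uniquely by its characteristic polynomial $\Char_{\R,A}(t)$, which factors as a product $f_1\cdots f_r$ of \emph{distinct} fundamental irreducible factors with $\sum_i \deg f_i = n$. Hence regular semisimple classes of degree $n$ correspond bijectively to finite sets $\{f_1,\dots,f_r\}$ of distinct fundamental irreducibles of total degree $n$. For the centralizer I would show it splits as $\prod_i (\R[t]/(f_i))^{\times}$; since each $\R[t]/(f_i)$ is a length-two local ring with residue field $\mathbb F_{q^{d_i}}$ (where $d_i=\deg f_i$) and maximal ideal of size $|\mathfrak m|^{d_i}$, its unit group has order $|\mathfrak m|^{d_i}(q^{d_i}-1)$, so $|Z(A)| = \prod_i |\mathfrak m|^{d_i}(q^{d_i}-1)$.

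Assembling these ingredients gives
\[
1+\sum_{n\ge 1} s_n z^n \;=\; \sum_{\{f_1,\dots,f_r\}} \prod_{i=1}^{r} \frac{z^{\deg f_i}}{|\mathfrak m|^{\deg f_i}(q^{\deg f_i}-1)},
\]
where the outer sum runs over all finite sets of distinct fundamental irreducibles (the empty set producing the constant term $1$). Because distinct $f_i$ contribute independently, this factors as $\prod_{f}\bigl(1+z^{\deg f}/(|\mathfrak m|^{\deg f}(q^{\deg f}-1))\bigr)$ over single fundamental irreducibles $f$; grouping by degree and counting the fundamental irreducibles of degree $d$ as $|\mathfrak m|^d N(q,d)$ --- namely the $N(q,d)$ monic irreducibles over $k$, each admitting $|\mathfrak m|^d$ monic lifts to $\R$ --- yields the first identity. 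For the second, \Cref{thm:reg-sem-poly} tells us a regular semisimple class lies in $\im(\Phi_L)$ exactly when every fundamental irreducible factor of its characteristic polynomial is an $L$-power polynomial; restricting the product to such factors and counting them by degree as $N_{\R,L}(q,d)$ produces the second identity.

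The step I expect to be the main obstacle is the centralizer computation together with the cleanness of the parametrization over $\R$. Over a ring of length two the nilpotents obstruct the naive field-case reasoning flagged in the introduction, so I must confirm both that regular semisimple classes are faithfully recorded by squarefree characteristic polynomials with distinct fundamental factors and that the centralizer really decomposes as $\prod_i (\R[t]/(f_i))^{\times}$ with order $\prod_i |\mathfrak m|^{d_i}(q^{d_i}-1)$. This is where the canonical-form machinery and a careful count of lifts (the $|\mathfrak m|^d$ factor and the $L$-power refinement $N_{\R,L}(q,d)$) do the real work; once the per-block centralizer order and lift-counts are in hand, the remaining Euler-product bookkeeping is routine.
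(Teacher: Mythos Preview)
Your proposal is correct and follows essentially the same approach as the paper: the paper also parametrizes regular semisimple classes by unordered sets of distinct monic fundamental irreducibles (via \Cref{lem:tech-1-thm-1} and the block-diagonal canonical form), computes the centralizer order as $\prod_i |\mathfrak m|^{d_i}(q^{d_i}-1)$ in \Cref{lem:centra-reg-sem-R}, counts fundamental irreducibles of degree $d$ as $|\mathfrak m|^d N(q,d)$ (and $N_{\R,L}(q,d)$ in the $L$-power case via \Cref{lem:count-fund-irred-L}), and then invokes \Cref{thm:reg-sem-poly} for the second identity. The only organizational difference is that the paper first packages the class-count generating functions separately in \Cref{prop:gen-func-reg-sem-class} and then weights by the centralizer contribution, whereas you do both steps at once.
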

\begin{restatable}{theorem}{theoremfour}\label{thm:gen-fun-cyc}
Let $\R$ be a finite local principal ideal ring of length two, with unique maximal ideal $\mathfrak{m}$. Let $p$ be the characteristic of its residue field $k$, which is an odd prime. Fix an integer $L > 0$ such that $\gcd(L, p) = 1$. Let $r_n$ denote the probability that a randomly chosen element of $\GL_n(\R)$ is compatible cyclic, and let $r_{n, L}$ denote the probability that a randomly chosen element of $\GL_n(\R) \cap \im(\Phi_L)$ is compatible cyclic. Then the generating functions for these probabilities admit the following factorization:
\begin{align}
1+\sum\limits_{n=1}^{\infty} r_nz^n& = \prod_{d=1}^{\infty}\left(1+\sum\limits_{s=1}^{\infty}\dfrac{z^{ds}}{|\mathfrak{m}|^{ds}q^{(s-1)d}(q^d-1)}\right)^{|\mathfrak m|^d N(q,d)},\\
1+\sum\limits_{n=1}^{\infty} r_{n,L}z^n& = \prod_{d=1}^{\infty}\left(1+\sum\limits_{s=1}^{\infty}\dfrac{z^{ds}}{|\mathfrak{m}|^{ds}q^{(s-1)d}(q^d-1)}\right)^{N_{\R,L}(q,d)},
\end{align}
where $q$ is the order of the residue field.
\end{restatable}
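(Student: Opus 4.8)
The plan is to express each of the two probabilities as a sum of reciprocal centralizer orders over the appropriate conjugacy classes, and then to show that this sum factors as an Euler product indexed by the fundamental irreducible polynomials over $\R$. The two displayed identities will then differ only in which fundamental irreducibles are allowed to contribute: all of them for $r_n$, and only the $L$-power ones for $r_{n,L}$. This mirrors the proof I would give for \Cref{thm:gen-fun}, with the single-box factor $1+z^d/(|\mathfrak m|^d(q^d-1))$ replaced by the full cyclic factor summing over multiplicities $s$.

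First I would pin down the combinatorial bookkeeping using the canonical forms for compatible cyclic matrices established earlier in the paper. The goal is to record that a compatible cyclic conjugacy class in $\GL_n(\R)$ is specified precisely by an assignment $\phi\mapsto s_\phi$ of a nonnegative integer multiplicity to each fundamental irreducible polynomial $\phi$ over $\R$ (with $s_\phi=0$ for all but finitely many $\phi$), subject to $\sum_\phi s_\phi\deg\phi=n$, where the primary part attached to $\phi$ is a single cyclic block with invariant $\phi^{s_\phi}$. The compatibility condition is exactly what guarantees that the primary decomposition over $\R$ mirrors that over $k$, so that this assignment is a genuine bijection. I would also record the degree-by-degree count: there are exactly $|\mathfrak m|^d N(q,d)$ fundamental irreducibles of degree $d$, since each of the $N(q,d)$ monic irreducibles of degree $d$ over $k$ admits $|\mathfrak m|^d$ monic lifts to $\R$.

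The analytic heart is the centralizer computation. For a compatible cyclic $A$, the centralizer in $\GL_n(\R)$ is the unit group of the commutative ring $\R[A]$, which splits as the direct product $\prod_\phi\bigl(\R[t]/(\tilde\phi^{\,s_\phi})\bigr)^\times$ over the primary components, with $\tilde\phi$ a lift of $\phi$. Writing $d=\deg\phi$ and using $|\R|=q|\mathfrak m|$ together with the fact that $\R[t]/(\tilde\phi^{\,s})$ is local with residue field $\mathbb F_{q^d}$, one finds $\bigl|(\R[t]/(\tilde\phi^{\,s}))^\times\bigr|=|\mathfrak m|^{ds}q^{(s-1)d}(q^d-1)$, which is exactly the denominator appearing in the displayed products. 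Since $|[A]|=|\GL_n(\R)|/|Z(A)|$, the probability that a random element is compatible cyclic is $r_n=\sum_{[A]}1/|Z(A)|$, and multiplicativity of $|Z(A)|$ over the primary components turns $1+\sum_n r_n z^n$ into the product over all fundamental irreducibles $\phi$ of the local factor $1+\sum_{s\ge 1}z^{s\deg\phi}/\bigl(|\mathfrak m|^{s\deg\phi}q^{(s-1)\deg\phi}(q^{\deg\phi}-1)\bigr)$. Collecting the $|\mathfrak m|^d N(q,d)$ factors of each degree $d$ yields the first identity. For the second, I would invoke \Cref{thm:cyc}: a compatible cyclic element lies in $\im(\Phi_L)$ if and only if every fundamental irreducible factor of $\Char_{\R,A}(t)$ is an $L$-power polynomial, so the same sum restricted to classes supported on the $L$-power fundamental irreducibles, of which there are $N_{\R,L}(q,d)$ in degree $d$, produces the second identity verbatim.

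I expect the main obstacle to lie in the parametrization and centralizer step rather than in the formal generating-function manipulation. One must verify that the canonical forms make $A\mapsto(\phi,s_\phi)_\phi$ a true bijection onto compatible cyclic classes, so that no class is double counted and the compatibility constraint over $\R$ is captured exactly, and that the centralizer really is the full unit group $\prod_\phi(\R[t]/(\tilde\phi^{\,s_\phi}))^\times$, with no extra contribution arising from the length-two structure of $\R$. Once these two facts are secured, the factorization of the series and the passage from the full product to the $L$-power product via \Cref{thm:cyc} are routine, so the remaining care is in the normalization, namely checking that $r_{n,L}$ is recovered as $\sum_{[A]}1/|Z(A)|$ over the compatible cyclic classes lying in $\im(\Phi_L)$.
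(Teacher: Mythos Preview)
Your proposal is correct and follows essentially the same route as the paper. The paper packages the centralizer computation into a separate lemma (\Cref{lem:centra-comp-cyc-R}) and first writes down the class-counting generating functions $1+\sum cr_n z^n=\prod_d(1-z^d)^{-|\mathfrak m|^dN(q,d)}$ before weighting by $1/|\Zen|$, whereas you go directly to the probability generating function; but the underlying ingredients---unique parametrization of compatible cyclic classes by multiplicity data on fundamental irreducibles via \Cref{lem:snap-factor} and \Cref{prop-canonical-form-type}, the centralizer identification $\Zen_{\GL_n(\R)}(A)=\R[A]^\times\cong\prod_i(\R[t]/\langle F_i^{r_i}\rangle)^\times$, and the appeal to \Cref{thm:cyc} for the $L$-restricted version---are identical.
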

\noindent We set some notation below, which will be used throughout. 

\subsection*{Notation and convention}
Throughout the article, $\R$ denotes a local principal ideal ring of length $2$, with its unique maximal ideal $\mathfrak{m}$. Some typical example of such a rings are $\mathbb{Z}/p^2\mathbb{Z}$ and $\mathbb F_p[t]/<t^2>$. Notation $\R^\times$ denotes the set of units of $\R$. Further, $p$ denotes the characteristic of the quotient field $k = R/\mathfrak m$, which will be taken to be an odd prime. We will be mostly dealing with $L\geq 2$, an integer coprime to $p$. 

Let $\M_n(\R)$ be the set of all $n\times n$ matrices with entries from $\R$. The general linear group $\GL_n(\R)$ is the set of all elements $X\in \M_n(\R)$ such that $\det(X)\in \R^{\times}$. The quotient map $\theta\colon \R \rightarrow k =\R/ \mathfrak{m}$ induces a canonical map $\theta\colon \M_n(\R)\rightarrow \M_n(k)$ denoted by $X\mapsto \overline{X}$. Thus, we get a map $\theta\colon \GL_n(\R) \rightarrow \GL_n(k)$. Further, note that the quotient map $\theta\colon \R \rightarrow k$ also induces a surjective map of polynomials $\theta\colon \R[t]\rightarrow k[t]$. By abuse of notation, all the above maps are denoted simply by $\theta$, usually clear from the context. We say that an element $X\in \GL_n(\R)$ \emph{belongs to a conjugacy class of type $\mathbf{C}$} if the conjugacy class of $\overline{X} = \theta(X)$ in $\GL_n(k)$ is of type $\mathbf{C}$. Thus, an element $X\in \GL_n(\R)$ is said to be \emph{regular semisimple, semisimple or cyclic} if $\overline{X}$ is regular semisimple, semisimple or cyclic in $\GL_n(k)$, respectively. 
For a ring $\mathscr R$ and two elements $A, B\in\M_n(\mathscr R)$, the $\mathscr R$-conjugacy denoted as $A\sim_{\mathscr R} B$ means there exists $C\in\GL_n(\mathscr R)$ such that $A = CBC^{-1}$.
For $X\in\GL_n(\R)$ the natural $\R[t]$-module structure on $M = \R^n$ is denoted by $M^X$ and the corresponding $k[t]$ module structure on $k^n$ is denoted by ${M}^{\overline{X}}$. 

The power map $\Phi_L\colon \GL_n(\R) \rightarrow \GL_n(\R)$ given by  $X\mapsto X^L$ induces a power map $\overline{\Phi}_L\colon \GL_n(k) \rightarrow \GL_n(k)$. The $\R$-conjugacy class (resp. $k$-conjugacy class) of $X$ (resp. $\overline{X}$) is denoted  by $[X]_R$ (resp. $[\overline{X}]_k$). For a ring $\mathscr R$ and an element $A\in \GL_n(\mathscr R)$, the centralizer group is denoted by $\Zen_{\GL_n(\mathscr R)}(A)$. Uppercase letters such as $F(t), G(t), H(t)$ are used to denote a polynomial in $\R[t]$, and its image under the canonical map $\theta\colon \R[t]\rightarrow k[t]$ is denoted by lowercase letters such as $f(t), g(t), h(t)$ respectively.
So, if $f(t)\in k[t]$ is the image of $F[t]$ under the canonical map, then $\theta(F(t))= \overline{F}(t) =f(t)$.

For a commutative ring $\mathscr R$ with unity and a polynomial $\mathfrak f(t)=t^n+\sum\limits_{i=0}^{n-1} c_it^i\in \mathscr R[t]$, the companion matrix $C_{\mathfrak f}\in\M_n(\mathscr R)$ of degree $n$ is defined to be the matrix $$\begin{pmatrix}
    0 & 0 & \dots & 0 & -c_0 \\
1 & 0 & \dots & 0 & -c_1 \\
0 & 1 & \dots & 0 & -c_2 \\
\vdots & \vdots & \ddots & \vdots & \vdots \\
0 & 0 & \dots & 1 & -c_{n-1}
\end{pmatrix}.$$

\subsection*{Organization of the article} 
Having provided a brief overview of the main question and existing results, we now present the preparatory material in \Cref{sec:result-polynomials}, which includes results about the polynomials in $\mathscr O_\ell[t]$.
This section also contains (previously known) several versions of Hensel's lemma, which we require for our work. \Cref{sec:prep} describes characteristic polynomial, minimal polynomial of matrices in $\GL_n(\R)$, and further describe (previously known) conjugacy classes of $\GL_2(\Z/p^{\ell}\Z)$. In \Cref{sec:separable}, we study regular semisimple matrices in $\GL_n(\R)$. One of the main results, \Cref{thm:reg-sem-poly}, appears here; it characterizes when a matrix $A \in \GL_n(\R)$ lies in the image of $\Phi_L$, in terms of its characteristic polynomial.
\Cref{sec:cyc} is devoted to the study of special classes of cyclic matrices in $\GL_n(\R)$, namely compatible cyclic. 
We establish a canonical form for these matrices, presented in \Cref{prop-canonical-form-type}.
Using this form, we establish a criterion for a cyclic matrix to lie in $\im(\Phi_L)$ in terms of properties of its characteristic (and minimal) polynomial, see \Cref{thm:cyc}.
Next, in \Cref{sec:counting}, we prove \Cref{thm:gen-fun} and \Cref{thm:gen-fun-cyc}, after first deriving a formula for the number of monic fundamental irreducible polynomials in $\R[t]$ of a given degree.  
In \Cref{sec:res-cond-L-p} we provide examples illustrating the necessity of the hypothesis $\gcd(L,p)=1$. Finally, we conclude the article with \Cref{res-conc-rem}, where we propose some questions for further investigation.
\color{black}
\subsection*{Acknowledgment} 
We thank Hassain M.\ for helpful discussions on conjugacy classes in $\GL_2(\Z/p^\ell\Z)$. We also thank B. Sury for his interest in this work.

\section{Properties of polynomials in $\mathscr O_\ell[t]$}\label{sec:result-polynomials}

This section reviews background materials on polynomials over a local principal ideal ring of length $\ell$, denoted as $\mathscr O_\ell$. It begins with completely primary rings, discusses different versions of Hensel's lemma, and collects information about factorization over $\mathscr O_\ell$, which we require later for our work.

\subsection{Completely primary ring}
A \emph{completely primary ring} is a ring whose nilradical is a maximal ideal. This enables us to use the theory of complete local rings, as developed by Snapper in \cite{Snapper1950}, \cite{Snapper1951a}, \cite{Snapper1951b}, and \cite{Snapper1952}. A local principal ideal ring of length $\ell$ is an example of a completely primary ring. For our work in this article, we require certain results about a specific factorization of a monic polynomial in $\mathscr O_\ell[t]$, which we mention here.  

A polynomial $F(t) \in\mathscr O_\ell[t]$ is called a \emph{fundamental irreducible} of $\mathscr O_\ell[t]$ if $\theta(F(t)) = \overline{ F}(t)$ is an irreducible element of $k[t]$, see \cite[Definition 2.1]{Snapper1950}. 
Recall, if $\mathscr R$ is a commutative ring with identity, an element $\mathfrak{f}\in\mathscr R$ is called an \emph{irreducible} element if  $\mathfrak{f}=\mathfrak{g} \mathfrak{h}$ implies either $\mathfrak{g}$ or $\mathfrak{h}$ is a unit in $\mathscr R$.
An element $\mathfrak{f}\in\mathscr R$ is \emph{primary} element if the ideal $\langle \mathfrak{f}\rangle$ is a primary ideal (see \cite[p. 50]{AtiyahMacBook69}). 
Two ideals $I$ and $J$ of $\mathscr R$ are called \emph{co-prime or relatively divisorless} (as per Snapper) if their ideal sum $(I, J)=\mathscr R$. 
Two elements $\mathfrak{f},\mathfrak{g}$ are called \emph{associated elements} if their respective principal ideals $\langle\mathfrak{f}\rangle$ and $\langle\mathfrak{g}\rangle$ are equal in $\mathscr R$. 
With this in mind, we have the following results about factorization in primary rings (see \cite[Theorem 5.1]{Snapper1950}).

\begin{lemma}\label{lem:snap-factor}
Let $\mathscr R$ be a ring, $N(\mathscr R)$ be its nilradical, and $\mathscr R/N(\mathscr R)$ be an integral, principal ideal ring. Then the following are true.
\begin{enumerate}
\item Every non-nilpotent element $\alpha$ of $\mathscr R$ can be factored as $\alpha = \delta \sigma_1\sigma_2\ldots\sigma_n$, where $\delta$ is a unit and $\sigma_1,\ldots,\sigma_n$ are primary, not nilpotent non-units, which are coprime in pairs.
\item If $\delta \sigma_1 \ldots \sigma_n = \delta' \sigma_1' \ldots \sigma_{n'}'$, where $\delta$ and $\delta'$ are units, $\sigma_1, \ldots, \sigma_n$ are primary non-units which are coprime in pairs and the same is true for $\sigma'_1,\ldots, \sigma'_{n'}$; then $n = n'$ and after a suitable reordering, $\sigma_i$ is associated with $\sigma_i'$ for $i=1, 2, \ldots, n$. If $n> 1$, the elements $\sigma_1, \ldots,\sigma_n, \sigma'_1, \ldots, \sigma'_{n'}$ are necessarily not nilpotent.
\end{enumerate}
\end{lemma}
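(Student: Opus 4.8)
The plan is to descend to the principal ideal domain $\overline{\mathscr{R}} := \mathscr{R}/N$, where $N := N(\mathscr{R})$ is the nilradical and $\overline{\,\cdot\,}$ denotes reduction, factor there by ordinary unique factorization, and then lift the factorization back to $\mathscr{R}$. The one structural fact driving everything is that $N$ is contained in every prime ideal, hence in the Jacobson radical. This yields the lemma I would isolate first, \emph{coprimeness lifts}: if ideals $I, J$ satisfy $\overline{I} + \overline{J} = \overline{\mathscr{R}}$, then $I + J + N = \mathscr{R}$, and a proper ideal $I+J$ would lie in some maximal ideal that also contains $N$, forcing $\mathscr{R} = I + J + N$ to be proper, a contradiction; hence $I + J = \mathscr{R}$. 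In particular, lifts of coprime elements are coprime, and Bézout relations persist.

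For existence in (1), reduce $\overline{\alpha} \neq 0$ (as $\alpha$ is non-nilpotent) in the PID $\overline{\mathscr{R}}$ to the form $\overline{\delta}\,\overline{\pi}_1\cdots\overline{\pi}_n$, a unit times pairwise-coprime prime powers. I would build the lift by induction on $n$, the crux being the two-factor statement: given $\overline{\alpha} = \overline{\beta}\,\overline{\gamma}$ with $\overline{\beta}, \overline{\gamma}$ coprime, lift to an honest factorization $\alpha = \beta\gamma$ with $\beta, \gamma$ coprime and reducing to $\overline\beta, \overline\gamma$. Choosing any lifts gives $\beta_0\gamma_0 = \alpha - \nu$ with $\nu \in N$; using a Bézout relation $a\beta_0 + b\gamma_0 = 1$, the corrected pair $\beta_0 + b\nu,\ \gamma_0 + a\nu$ multiplies to $\alpha + ab\nu^{2}$, i.e. to $\alpha$ modulo $(\nu)^{2}$. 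Iterating, the error lands in $(\nu)^{2^{k}}$, which vanishes since $\nu$ is nilpotent; this Hensel/Newton-style correction is where essentially all the work sits. (For $\mathscr{O}_2$, where $N^2 = 0$, a single step already finishes.) Peeling $\overline{\pi}_1$ off the coprime complement $\overline{\pi}_2\cdots\overline{\pi}_n$ and recursing produces $\alpha = \delta\sigma_1\cdots\sigma_n$ with each $\sigma_i$ reducing to $\overline{\pi}_i$, hence a non-nilpotent non-unit.

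It remains to see each $\sigma_i$ is primary, i.e. $(\sigma_i)$ is a primary ideal. Every prime of $\mathscr{R}$ containing $\sigma_i$ contains $N$, so these primes correspond bijectively to the primes of $\overline{\mathscr{R}}$ containing the prime power $\overline{\pi}_i$; there is exactly one such prime, so $\mathscr{R}/(\sigma_i)$ has a unique prime ideal, meaning its nilradical is maximal. This is precisely the assertion that $(\sigma_i)$ is primary, and it shows $\mathscr{R}/(\sigma_i)$ is a local, hence indecomposable, ring.

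For uniqueness in (2), I would exploit that pairwise-coprime ideals satisfy $(\sigma_1\cdots\sigma_n) = \bigcap_i(\sigma_i)$, so the Chinese Remainder Theorem gives $\mathscr{R}/(\alpha) \cong \prod_i \mathscr{R}/(\sigma_i)$, a decomposition of the fixed ring $\mathscr{R}/(\alpha)$ into indecomposable (local) factors. Such a decomposition is unique up to reordering: the factors are cut out by the primitive idempotents, so the kernels of the projections — namely the ideals $(\sigma_i)/(\alpha)$ — are intrinsic to $\mathscr{R}/(\alpha)$. Comparing with the second factorization forces $n = n'$ and $(\sigma_i) = (\sigma_i')$ after reordering, i.e. $\sigma_i$ is associated to $\sigma_i'$. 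Finally, the closing remark that the factors are non-nilpotent once $n > 1$ is immediate: a nilpotent $\sigma_i$ would lie in $N$, so pairwise coprimeness $(\sigma_i) + (\sigma_j) = \mathscr{R}$ together with $(\sigma_i) \subseteq N \subseteq \mathrm{Jac}(\mathscr{R})$ would force $(\sigma_j) = \mathscr{R}$, contradicting that $\sigma_j$ is a non-unit. The main obstacle throughout is the lifting step of the existence part; the remainder is bookkeeping over $\overline{\mathscr{R}}$ together with the CRT/idempotent uniqueness.
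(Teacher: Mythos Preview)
The paper does not prove this lemma; it is quoted from \cite[Theorem~5.1]{Snapper1950} and used as a black box (the only nearby content is the companion citation \Cref{lem:primary-modulo}). So there is nothing in the paper itself to compare your argument against.

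That said, your argument is correct. The lifting of coprimeness through $N$ via $N\subseteq\mathrm{Jac}(\mathscr R)$, the Newton--Hensel iteration to lift a two-term coprime factorization from the PID $\mathscr R/N$ (with the error confined to successively higher powers of a single nilpotent $\nu$, hence vanishing in finitely many steps), and the CRT/primitive-idempotent argument for uniqueness are all sound. One small refinement worth noting: in the uniqueness step you describe each $\mathscr R/(\sigma_i)$ as \emph{local}, but all you actually need---and all that follows immediately from $(\sigma_i)$ being primary---is that $\mathscr R/(\sigma_i)$ has no nontrivial idempotents (a nontrivial idempotent $e$ would make both $e$ and $1-e$ non-nilpotent zero-divisors). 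That already pins down the primitive idempotents of $\mathscr R/(\alpha)$ and hence the ideals $(\sigma_i)$. Locality does follow once you know the $\sigma_i$ are non-nilpotent (so that $\overline{\sigma_i}$ is a prime power and the primes over $(\sigma_i)$ correspond to the single prime over $(\overline{\sigma_i})$), but in the boundary case $n=1$ with $\sigma_1$ possibly nilpotent you should fall back on indecomposability rather than locality; the conclusion $(\sigma_1)=(\alpha)=(\sigma_1')$ and $n'=1$ then drops out as you indicate.
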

\noindent Further, we have \cite[p. 673]{Snapper1950},
\begin{lemma}\label{lem:primary-modulo}
An element $\alpha\in \mathscr R$ is primary and non-nilpotent if and only if $\overline{\alpha}$ is a primary nonzero element of $\mathscr R/N(\mathscr R)$.
\end{lemma}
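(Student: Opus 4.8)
The plan is to reduce the statement to a clean correspondence between radicals under the quotient map $\pi \colon \mathscr R \to \mathscr R/N(\mathscr R)$, exploiting the fact that the only primes in sight are $N(\mathscr R)$ and the maximal ideals. First I would dispose of the bookkeeping: since $N(\mathscr R)$ is exactly the set of nilpotent elements, the condition $\overline\alpha \neq 0$ is literally the same as $\alpha$ being non-nilpotent, so only the word \emph{primary} requires work. I would also note that $N(\mathscr R)$ lies in the Jacobson radical (the nilradical is contained in every maximal ideal), whence $\alpha$ is a unit if and only if $\overline\alpha$ is; in the unit case neither $\langle\alpha\rangle$ nor $\langle\overline\alpha\rangle$ is primary and there is nothing to prove. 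Thus I may assume throughout that $\alpha$ is a non-nilpotent non-unit.

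The key reduction is the claim that \emph{for such $\alpha$, the ideal $\langle\alpha\rangle$ is primary if and only if $\sqrt{\langle\alpha\rangle}$ is a maximal ideal}, together with its analogue over $\mathscr R/N(\mathscr R)$. Here I would invoke two standard facts: an ideal whose radical is maximal is automatically primary (see \cite[Prop.~4.2]{AtiyahMacBook69}); and, conversely, if $\langle\alpha\rangle$ is primary then $\sqrt{\langle\alpha\rangle}$ is prime. To see that this prime must be maximal, I observe that because $\mathscr R/N(\mathscr R)$ is a principal ideal domain its only primes are $(0)$ and its maximal ideals; pulling back along $\pi$ (all primes of $\mathscr R$ contain $N(\mathscr R)$), the only primes of $\mathscr R$ are $N(\mathscr R)$ itself and the maximal ideals. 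Since $\alpha \notin N(\mathscr R)$ we have $\sqrt{\langle\alpha\rangle} \neq N(\mathscr R)$, so it must be maximal. The corresponding statement over the principal ideal domain $\mathscr R/N(\mathscr R)$ is the familiar fact that a nonzero primary ideal there is a prime power, whose radical is a nonzero (hence maximal) prime.

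Finally I would show that radicals correspond under the quotient. Since $N(\mathscr R) = \sqrt{(0)} \subseteq \sqrt{\langle\alpha\rangle}$, the ideal $\sqrt{\langle\alpha\rangle}$ contains $N(\mathscr R)$, and a short computation — lifting a congruence $\overline\beta^{\,n} \in \langle\overline\alpha\rangle$ to $\beta^n - \alpha\gamma \in N(\mathscr R)$ and raising to a further power to annihilate the nilpotent part — shows $\pi\bigl(\sqrt{\langle\alpha\rangle}\bigr) = \sqrt{\langle\overline\alpha\rangle}$. As maximal ideals correspond to maximal ideals under $\pi$, this yields $\sqrt{\langle\alpha\rangle}$ maximal $\iff \sqrt{\langle\overline\alpha\rangle}$ maximal, and chaining this with the equivalences of the previous paragraph completes the argument. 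The one genuine subtlety, and the step I expect to be the crux, is that $\langle\alpha\rangle$ itself need \emph{not} contain $N(\mathscr R)$, so the naive ideal correspondence under $\pi$ does not apply to $\langle\alpha\rangle$ directly; passing to the radical, which does contain $N(\mathscr R)$, is precisely what repairs this defect.
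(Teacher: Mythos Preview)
Your argument is correct. Note, however, that the paper does not actually supply a proof of this lemma: it is quoted directly from Snapper with the citation \cite[p.~673]{Snapper1950} and no argument is reproduced, so there is no ``paper's own proof'' to compare against. Your reduction --- passing to radicals, using that every prime of $\mathscr R$ contains $N(\mathscr R)$ so that the prime spectrum of $\mathscr R$ is $\{N(\mathscr R)\}$ together with the maximal ideals (since $\mathscr R/N(\mathscr R)$ is a PID), and invoking \cite[Prop.~4.2]{AtiyahMacBook69} for the direction ``radical maximal $\Rightarrow$ primary'' --- is a clean self-contained proof in the intended setting of Lemma~\ref{lem:snap-factor}. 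Your closing observation that $\langle\alpha\rangle$ need not contain $N(\mathscr R)$, so one must pass to $\sqrt{\langle\alpha\rangle}$ before invoking the ideal correspondence, is exactly the right diagnosis of where a na\"ive argument would go wrong.
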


\subsection{Several versions of Hensel's lemma}\label{subsec-hens-lem} 

Hensel's lemma appears in several well-known forms. Here, we state the versions required for our work. Without delving into details, we provide these versions here for the sake of completeness and give the appropriate reference for further details.

We begin with a version for polynomial factorisation over an appropriate ring, see \cite[Theorem 7.18]{Eisenbud95}. 
\begin{lemma}[Hensel's lemma version 1]\label{hensel-1}
Let $\mathscr R$ be a Noetherian ring, complete with respect to an ideal $\mathfrak{m}$. Let $F(t)$ be a polynomial in $\mathscr R[t]$ and $f(t)$ be the polynomial over $\mathscr R/\mathfrak{m}$ obtained by reducing $F(t)$ modulo $\mathfrak{m}$. Suppose $f(t)$ has a factorization $f(t) = g_1(t)g_2(t)$ in $\mathscr R/\mathfrak{m}[t]$ in such a way that $g_1(t)$ and $g_2(t)$ generate the unit ideal, and $g_1(t)$ is monic. Then, there is a unique factorization $$F(t)=G_1(t)G_2(t)\in \mathscr R[t]$$ such that $G_1(t)$ is monic and $G_i(t)$ reduces to $g_i(t)$ mod $\mathfrak{m}$ for $i=1, 2$.
\end{lemma}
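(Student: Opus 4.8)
The plan is to prove the classical Hensel factorisation lemma (\Cref{hensel-1}) by successive approximation modulo the powers of the ideal $\mathfrak{m}$, which is the standard route and is well suited to the completeness hypothesis. First I would set up the approximation scheme: starting from the given factorisation $f(t) = g_1(t)g_2(t)$ over $\mathscr R/\mathfrak{m}$ with $g_1$ monic and $(g_1, g_2)$ generating the unit ideal, I choose arbitrary lifts $G_1^{(1)}, G_2^{(1)} \in \mathscr R[t]$ reducing to $g_1, g_2$, with $G_1^{(1)}$ monic of the correct degree. The goal is to inductively construct $G_1^{(n)}, G_2^{(n)}$ with $G_1^{(n)}$ monic, $G_i^{(n)} \equiv g_i \pmod{\mathfrak{m}}$, and $F \equiv G_1^{(n)} G_2^{(n)} \pmod{\mathfrak{m}^n}$, each agreeing with the previous stage modulo $\mathfrak{m}^{n-1}$.

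For the inductive step I would write $F - G_1^{(n)} G_2^{(n)} = E_n$, where by hypothesis $E_n$ has coefficients in $\mathfrak{m}^n$. I seek corrections $G_i^{(n+1)} = G_i^{(n)} + \Delta_i$ with $\Delta_i$ having coefficients in $\mathfrak{m}^n$, so that the product improves to accuracy $\mathfrak{m}^{n+1}$. Expanding and discarding the $\Delta_1 \Delta_2$ term (which lands in $\mathfrak{m}^{2n} \subseteq \mathfrak{m}^{n+1}$) reduces the requirement to solving the linear congruence $G_2^{(n)} \Delta_1 + G_1^{(n)} \Delta_2 \equiv E_n \pmod{\mathfrak{m}^{n+1}}$. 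Here I would use the coprimality: since $(g_1, g_2)$ is the unit ideal over $\mathscr R/\mathfrak{m}$, there exist $a, b$ with $a g_1 + b g_2 \equiv 1$, which lifts to an identity valid modulo $\mathfrak{m}$, and I then multiply $E_n$ by this Bézout relation to produce candidate $\Delta_i$. To keep $G_1^{(n+1)}$ monic of the right degree I would invoke division with remainder by the monic polynomial $G_1^{(n)}$, adjusting $\Delta_1$ to have degree strictly less than $\deg g_1$ and pushing the overflow into $\Delta_2$; monicity of $G_1^{(n)}$ is exactly what makes this division available over the ring.

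Having built a compatible sequence, I would pass to the limit using completeness of $\mathscr R$ with respect to $\mathfrak{m}$: the coefficients of $G_i^{(n)}$ form Cauchy sequences in the $\mathfrak{m}$-adic topology, hence converge to $G_i \in \mathscr R[t]$, and the congruences $F \equiv G_1^{(n)} G_2^{(n)} \pmod{\mathfrak{m}^n}$ force $F = G_1 G_2$ exactly. Monicity of $G_1$ and the reductions $G_i \equiv g_i \pmod{\mathfrak{m}}$ survive the limit. For uniqueness, suppose $F = G_1 G_2 = G_1' G_2'$ are two such factorisations; I would show by induction on $n$ that $G_i \equiv G_i' \pmod{\mathfrak{m}^n}$, again reducing to the coprime linear-algebra step, and conclude equality by separatedness of the $\mathfrak{m}$-adic topology (which follows from completeness together with $\bigcap_n \mathfrak{m}^n = 0$).

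I expect the main obstacle to be the bookkeeping in the inductive step, specifically ensuring that the correction $\Delta_1$ can always be chosen so that $G_1^{(n+1)}$ remains monic of the prescribed degree while the residual error genuinely improves to $\mathfrak{m}^{n+1}$. This is where monicity of $g_1$ and the coprimality of $(g_1, g_2)$ interact: the Bézout relation alone does not control degrees, so the division algorithm by the monic $G_1^{(n)}$ is essential to split the correction correctly. Since this is a standard result quoted from \cite[Theorem 7.18]{Eisenbud95}, in practice I would either reproduce this successive-approximation argument or simply cite the reference; the substantive content for the present paper is that the hypotheses (Noetherian, $\mathfrak{m}$-adically complete, one factor monic, factors coprime) are met by $\mathscr O_\ell$, which is a complete local ring.
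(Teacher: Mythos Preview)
The paper does not prove this lemma at all; it simply states it and cites \cite[Theorem 7.18]{Eisenbud95}. Your successive-approximation argument is the standard proof (and is essentially the one in Eisenbud), so your proposal is correct and in fact goes beyond what the paper does—you already anticipated this at the end of your write-up.
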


\noindent Now, we collect the version from number theory (see \cite[Theorem 2.1]{ConradHensel}) as follows:
\begin{lemma}[Hensel's lemma version 2]\label{hensel-2}
Let $\Z_p$ denote the ring of $p$-adic integers. If $f(t) \in \Z_p[t]$ and $a \in \Z_p$ satisfies
 $$f(a)\equiv 0 \pmod{p},\text{ and } f'(a)\not\equiv 0 \pmod{p}$$ 
then, there is a unique $\alpha\in \Z_p$ such that $f(\alpha )=0$ in $\Z_p$ and $\alpha\equiv a\pmod{p}$.
\end{lemma}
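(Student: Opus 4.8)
The plan is to deduce this root-finding form of Hensel's lemma from the factorisation form already recorded in \Cref{hensel-1}, applied to the ring $\mathscr R=\Z_p$ with $\mathfrak m=p\Z_p$ and residue field $\Z_p/\mathfrak m=\mathbb F_p$; both hypotheses of \Cref{hensel-1} are met, since $\Z_p$ is a complete (hence Noetherian) discrete valuation ring. Write $\bar a$ and $\bar f$ for the images of $a$ and $f$ under the reduction $\theta\colon\Z_p[t]\to\mathbb F_p[t]$. The two congruences $f(a)\equiv 0$ and $f'(a)\not\equiv 0\pmod p$ say precisely that $\bar a$ is a \emph{simple} root of $\bar f$, so that $\bar f(t)=(t-\bar a)\,h(t)$ with $h(\bar a)\neq 0$.

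The first key step is that $t-\bar a$ is monic and, because $h(\bar a)\neq 0$, it is coprime to $h$ in the principal ideal domain $\mathbb F_p[t]$; hence $t-\bar a$ and $h$ generate the unit ideal. \Cref{hensel-1} then yields a factorisation $f(t)=(t-\alpha)\,H(t)$ in $\Z_p[t]$ in which $t-\alpha$ is monic and reduces to $t-\bar a$, that is, $\alpha\equiv a\pmod p$. Evaluating at $t=\alpha$ gives $f(\alpha)=0$, which settles existence.

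For uniqueness, suppose $\beta\in\Z_p$ also satisfies $f(\beta)=0$ and $\beta\equiv a\pmod p$. Since $\Z_p$ is a domain, the factor theorem (division by the monic $t-\beta$) gives $f(t)=(t-\beta)\,\widetilde H(t)$ with $\widetilde H\in\Z_p[t]$. Reducing modulo $p$ and using $\bar\beta=\bar a$ shows $\theta(\widetilde H)=h$, and in particular $\theta(\widetilde H)(\bar a)\neq 0$ because $\bar a$ is a simple root; thus $(t-\beta,\widetilde H)$ is a factorisation of $f$ of exactly the type covered by \Cref{hensel-1}, with the same reductions $(t-\bar a,h)$ as $(t-\alpha,H)$. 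The uniqueness clause of \Cref{hensel-1} then forces $t-\beta=t-\alpha$, whence $\beta=\alpha$.

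As the statement is classical, I do not anticipate a genuine obstacle; the only points demanding care are the translation between \emph{simple root} and \emph{coprime monic factor} (which is exactly where the hypothesis $f'(a)\not\equiv 0\pmod p$ is used) and the matching of uniqueness of the linear factor in \Cref{hensel-1} with uniqueness of the root. If one instead wants a self-contained argument not invoking \Cref{hensel-1}, the standard route is the Newton iteration $a_{n+1}=a_n-f(a_n)/f'(a_n)$ with $a_1=a$: here $f'(a_n)$ is a unit because $a_n\equiv a\pmod p$, a Taylor expansion about $a_n$ gives $f(a_{n+1})\equiv 0\pmod{p^{2n}}$ together with $a_{n+1}\equiv a_n\pmod{p^n}$, so $(a_n)$ is $p$-adically Cauchy and converges to the desired root by completeness. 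In that route the only real work is the quadratic-convergence estimate rather than any conceptual difficulty.
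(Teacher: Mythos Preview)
Your proof is correct. Note, however, that the paper does not actually supply a proof of this lemma: it is stated with a reference to \cite[Theorem 2.1]{ConradHensel} and used as a black box. So there is no ``paper's own proof'' to compare against.

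That said, your deduction from \Cref{hensel-1} is a clean internal argument that fits the paper well, since it avoids importing the Newton iteration machinery and instead recycles a result already stated in the same subsection. The only point worth a second glance is the uniqueness step: you need that $\theta(\widetilde H)=h$ exactly (not merely $\theta(\widetilde H)(\bar a)\neq 0$), and this follows by cancelling the common nonzero factor $t-\bar a$ in the integral domain $\mathbb F_p[t]$. You do say this, so the argument is complete. The alternative Newton-iteration sketch you give at the end is the standard textbook proof (and presumably what the cited reference does); either route is fine here.
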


\noindent Finally we have (see \cite[Lemma 2.3.8]{Singla2010}),  
\begin{lemma}[Hensel's lemma version 3]\label{hensel-3}
Let $h(t)$ be an irreducible polynomial over $\mathbb{F}_q[t]$. Then, for each positive integer $n$, there exists $q_n(t) \in \mathbb{F}_q[t]$ such that $q_n(t) \equiv t \pmod{h(t)}$, and $h(q_n(t)) \equiv 0\pmod{h(t)^n}$.
\end{lemma}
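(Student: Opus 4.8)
The plan is to construct the $q_n(t)$ by induction on $n$, obtaining each as a Newton-type refinement of the naive approximant $q_1(t)=t$. For the base case $n=1$, the choice $q_1(t)=t$ works immediately: one has $q_1(t)\equiv t\pmod{h(t)}$ trivially, and $h(q_1(t))=h(t)\equiv 0\pmod{h(t)}$.

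For the inductive step, suppose $q_m(t)$ has been found with $q_m(t)\equiv t\pmod{h(t)}$ and $h(q_m(t))\equiv 0\pmod{h(t)^m}$; I would then seek $q_{m+1}(t)$ of the shape $q_{m+1}(t)=q_m(t)+h(t)^m\,c(t)$ for a suitable $c(t)\in\mathbb{F}_q[t]$. The formal Taylor identity $h(u+v)=h(u)+h'(u)\,v+v^2\,g(u,v)$, valid for some polynomial $g$ over $\mathbb{F}_q$ (and thus sidestepping any division by factorials in positive characteristic), applied with $u=q_m(t)$ and $v=h(t)^m c(t)$ gives, writing $h(q_m(t))=h(t)^m b(t)$,
\[
h(q_{m+1}(t))\equiv h(t)^m\bigl(b(t)+h'(q_m(t))\,c(t)\bigr)\pmod{h(t)^{m+1}},
\]
since $v^2=h(t)^{2m}c(t)^2$ is divisible by $h(t)^{m+1}$ for $m\geq 1$. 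Hence it suffices to solve the congruence $b(t)+h'(q_m(t))\,c(t)\equiv 0\pmod{h(t)}$ for $c(t)$.

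The crux, and the only genuine obstacle, is the invertibility of $h'(q_m(t))$ modulo $h(t)$. Because $q_m(t)\equiv t\pmod{h(t)}$, we have $h'(q_m(t))\equiv h'(t)\pmod{h(t)}$, so the question reduces to whether $h'(t)$ is a unit in the field $\mathbb{F}_q[t]/\langle h(t)\rangle$, equivalently whether $\gcd(h(t),h'(t))=1$. As $h$ is irreducible, this can fail only if $h(t)\mid h'(t)$, which by the degree inequality $\deg h'<\deg h$ forces $h'(t)=0$, i.e.\ $h$ is inseparable. But $\mathbb{F}_q$ is perfect, so every irreducible polynomial over it is separable and $h'(t)\neq 0$; thus $\gcd(h(t),h'(t))=1$ and $h'(t)$ is invertible modulo $h(t)$. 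Setting $c(t)\equiv -b(t)\,h'(t)^{-1}\pmod{h(t)}$ then produces a valid lift $q_{m+1}(t)$, which still satisfies $q_{m+1}(t)\equiv t\pmod{h(t)}$ since $h(t)^m c(t)\equiv 0\pmod{h(t)}$. This closes the induction and yields the required polynomials $q_n(t)$.
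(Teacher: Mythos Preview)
Your proof is correct and follows the standard Newton--Hensel lifting argument. Note, however, that the paper does not actually supply a proof of this lemma: it is quoted verbatim from \cite[Lemma~2.3.8]{Singla2010} and merely cited as background in \Cref{subsec-hens-lem}. That said, when the paper later proves its own analogue over $\R[t]$ (\Cref{lem-Hensel-type-5}), it uses exactly the same inductive scheme you describe---start with the trivial lift, write the correction as $q_m(t)+F(t)^{m-1}W_0(t)$, expand via the Taylor identity $H(u+v)=H(u)+H'(u)v+v^2\Gamma(u,v)$, and solve for the correction term using invertibility of the derivative modulo the irreducible, which in turn rests on separability over a perfect field. So your argument is precisely the base-ring version of the technique the paper itself employs.
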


\subsection{Factorisation in $\mathscr O_\ell[t]$}
We are going to prove a unique factorization type result for some special classes of polynomials in $\mathscr O_\ell[t]$.
We begin with,
\begin{lemma}\label{lem:f-irred-F-irred}
Let $\mathscr O_\ell$ be a local principal ideal ring of length $\ell$, $\mathfrak{m}= \langle a\rangle$ be its unique maximal ideal, and $k$ be the residue field. Let $F(t)\in \mathscr O_\ell[t]$ be a fundamental irreducible polynomial. Then $F(t)$ must be irreducible in $\mathscr O_\ell[t]$.
\end{lemma}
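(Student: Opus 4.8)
The plan is to prove the contrapositive: I will show that if $F(t)$ factors nontrivially in $\mathscr O_\ell[t]$, then its reduction $\overline{F}(t) = f(t)$ cannot be irreducible in $k[t]$, contradicting the hypothesis that $F$ is fundamental irreducible. So suppose $F(t) = G(t)H(t)$ with neither $G$ nor $H$ a unit in $\mathscr O_\ell[t]$. The key structural fact I want to exploit is that $\mathscr O_\ell$ is local with maximal ideal $\mathfrak m = \langle a\rangle$, and $a$ is nilpotent (since $\mathscr O_\ell$ has length $\ell$, we have $a^\ell = 0$ while $a^{\ell-1}\neq 0$); moreover the units of $\mathscr O_\ell$ are exactly the elements not in $\mathfrak m$, i.e. those with nonzero reduction in $k$.

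First I would pin down the degree of $F$. Since $F$ is fundamental irreducible, $f = \overline F$ is irreducible in $k[t]$, hence $\deg f \geq 1$; I would want $\deg F = \deg f$, which forces $F$ to have a unit leading coefficient (its reduction must be the nonzero leading coefficient of the irreducible $f$). The main subtlety is that $\mathscr O_\ell[t]$ is not an integral domain, so $\deg(GH)$ need not equal $\deg G + \deg H$, and one must be careful about what ``nontrivial factorization'' means. I would handle this by reducing modulo $\mathfrak m$: applying $\theta$ to $F = GH$ gives $f = \overline G\,\overline H$ in $k[t]$. Since $k[t]$ is a UFD and $f$ is irreducible, one of $\overline G, \overline H$ must be a nonzero constant in $k$, say $\overline H = c \in k^\times$ (it is a unit because $f$ has a unit leading coefficient, so the leading coefficients of $\overline G$ and $\overline H$ multiply to a unit, forcing each to be a unit; in particular $\overline H$ being constant makes it a unit in $k$).

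The crux is then to argue that $\overline H \in k^\times$ forces $H$ itself to be a unit in $\mathscr O_\ell[t]$, contradicting the assumed nontriviality of the factorization. This is the step I expect to be the main obstacle, because units in $\mathscr O_\ell[t]$ are not merely the constants: a polynomial over a ring with nilpotents can be a unit while having nonconstant (nilpotent) higher-degree coefficients. I would prove the needed claim directly: write $H(t) = h_0 + h_1 t + \cdots + h_m t^m$; since $\overline H = c$ is a nonzero constant, every $h_i$ for $i\geq 1$ lies in $\mathfrak m$ (hence is nilpotent) and $h_0$ reduces to $c\neq 0$, so $h_0 \notin \mathfrak m$ is a unit in $\mathscr O_\ell$. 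Then $H(t) = h_0(1 + h_0^{-1}(h_1 t + \cdots + h_m t^m))$, and the second factor is $1$ plus a polynomial all of whose coefficients are nilpotent; such a polynomial is nilpotent in $\mathscr O_\ell[t]$, so $1 + (\text{nilpotent})$ is a unit. Therefore $H$ is a unit in $\mathscr O_\ell[t]$.

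This contradicts the assumption that the factorization $F = GH$ was nontrivial, completing the contrapositive and hence the proof. I would present the two small lemmas used here — that a polynomial over $\mathscr O_\ell$ with all coefficients in the nilradical is nilpotent, and that $1 + \text{nilpotent}$ is a unit — as standard facts (the latter holding in any commutative ring, the former because $\mathfrak m^\ell = 0$ bounds the nilpotency index uniformly). The only genuine care required is the bookkeeping about leading coefficients when reducing mod $\mathfrak m$, ensuring that the constancy of $\overline H$ really does put all its nonconstant coefficients into $\mathfrak m$; everything else follows formally from locality and nilpotence.
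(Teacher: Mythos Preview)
Your proposal is correct and follows essentially the same approach as the paper's proof: reduce a putative nontrivial factorization $F=GH$ modulo $\mathfrak m$, use irreducibility of $f$ in $k[t]$ to conclude that one factor, say $\overline H$, is a nonzero constant in $k$, and then deduce that $H$ itself is a unit in $\mathscr O_\ell[t]$. The only cosmetic difference is that the paper cites an external reference (Conrad's notes) for the fact that a polynomial with unit constant term and remaining coefficients in $\mathfrak m$ is a unit, whereas you spell this out explicitly via the $1+\text{nilpotent}$ argument; your digression about $\deg F$ versus $\deg f$ and leading coefficients is unnecessary (the irreducibility of $f$ alone forces $\overline H\in k^\times$), but it does no harm.
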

\begin{proof}
Let $\overline{F}(t)=f(t) \in k[t]$. If possible, let there exist non-constant polynomials $G(t)$ and $H(t)$ in $\mathscr O_\ell[t]$ such that $F(t) = G(t)H(t)$. Since $f(t) = \overline{G}(t) \overline{H}(t)$ in $k[t]$ and $f(t)$ is irreducible, without loss of generality, we may assume that $\overline{H}(t) =c \in k$ is a unit. Then, $H(t) = m(t) + u$ for some $m(t)\in \mathfrak{m}[t]$ and $u\in \mathscr O_\ell$ a unit, satisfying $\theta(u) = c$. By \cite[Theorem 2.2]{ConradNilpotents}, $H(t)$ is a unit in $\mathscr O_\ell[t]$, whence $F(t)$ is irreducible.
\end{proof}
\noindent Next, we state a result describing the relationship between two coprime fundamental irreducible polynomials and their reductions under the map $\theta$.
\begin{lemma}\label{lem:F-G-fund-irred}
Let $\mathscr O_\ell$ be a local principal ideal ring of length $\ell$, with residue field $k$. Let $F(t), G(t)\in\mathscr O_\ell[t]$ be two polynomials. Then $F(t)$ and $G(t)$ are coprime if and only if their reductions $\overline{F}(t), \overline{G}(t)$ are coprime in $k[t]$.
\end{lemma}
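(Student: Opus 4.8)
The key is to read ``coprime'' through the comaximality definition recalled above: $F(t)$ and $G(t)$ are coprime in $\mathscr O_\ell[t]$ precisely when $(F(t),G(t)) = \mathscr O_\ell[t]$, equivalently when there exist $A(t),B(t)\in\mathscr O_\ell[t]$ with $A(t)F(t)+B(t)G(t)=1$; likewise $\overline F(t),\overline G(t)$ are coprime in the principal ideal domain $k[t]$ exactly when some B\'ezout identity $a(t)\overline F(t)+b(t)\overline G(t)=1$ holds. With this dictionary in place the proof splits into the two implications.

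For the forward implication I would simply apply the ring homomorphism $\theta\colon \mathscr O_\ell[t]\to k[t]$ to a B\'ezout identity $A(t)F(t)+B(t)G(t)=1$, obtaining $\overline A(t)\,\overline F(t)+\overline B(t)\,\overline G(t)=1$ in $k[t]$, so the reductions are coprime. This direction uses nothing beyond $\theta$ being a homomorphism.

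The substantive direction is the converse, namely lifting coprimality from $k[t]$. Starting from $a(t)\overline F(t)+b(t)\overline G(t)=1$, I would choose arbitrary lifts $A_0(t),B_0(t)\in\mathscr O_\ell[t]$ of $a(t),b(t)$. Then $A_0(t)F(t)+B_0(t)G(t)=1+m(t)$ for some $m(t)\in\mathfrak m[t]$, that is, a polynomial all of whose coefficients lie in $\mathfrak m$. The crucial point---and the only place the finite length $\ell$ enters---is that $\mathfrak m$ is nilpotent with $\mathfrak m^\ell=0$; consequently $(\mathfrak m[t])^\ell=\mathfrak m^\ell[t]=0$, so $m(t)$ is nilpotent and $1+m(t)$ is a unit in $\mathscr O_\ell[t]$. (This is exactly the statement of \cite[Theorem 2.2]{ConradNilpotents}, already invoked in the proof of \Cref{lem:f-irred-F-irred}; alternatively its inverse is the finite geometric sum $\sum_{i=0}^{\ell-1}(-1)^i m(t)^i$.) Multiplying the identity by $(1+m(t))^{-1}$ gives $\big((1+m(t))^{-1}A_0(t)\big)F(t)+\big((1+m(t))^{-1}B_0(t)\big)G(t)=1$, a genuine B\'ezout identity in $\mathscr O_\ell[t]$, whence $F(t)$ and $G(t)$ are coprime.

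I expect no serious obstacle here, but the one subtlety worth flagging is that $\mathscr O_\ell[t]$ is \emph{not} a principal ideal ring, so coprimality cannot be phrased through a gcd and must be handled entirely via the ideal-sum (comaximality) definition. The whole converse then rests on the unit-lifting step, and it is precisely the nilpotency of $\mathfrak m$---guaranteed by the finite length hypothesis---that reduces this to the elementary fact that $1$ plus a nilpotent is a unit; over a ring with non-nilpotent maximal ideal one would instead need a completeness or Hensel-type argument in the spirit of \Cref{hensel-1}.
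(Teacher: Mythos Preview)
Your proof is correct and follows essentially the same approach as the paper: apply $\theta$ to a B\'ezout identity for the forward direction, and for the converse lift coefficients and observe that the resulting combination $A_0(t)F(t)+B_0(t)G(t)$ is a unit in $\mathscr O_\ell[t]$ since its reduction is $1$. You spell out the unit-lifting step in slightly more detail than the paper (via nilpotency of $\mathfrak m[t]$), but the argument is the same.
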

\begin{proof}
Let $F(t)$ and $G(t)$ be coprime in $\mathscr O_\ell[t]$. By definition, there exist $A(t), B(t)\in \mathscr O_\ell$, such that $A(t)F(t) + B(t)G(t)=1$. On applying $\theta$, it reduces to $\overline{A}(t) \overline{F}(t) + \overline{B}(t)\overline{G}(t) = 1$; whence the forward direction.

For proving the other direction let $a(t), b(t)\in k[t]$ be such that $a(t)\overline{F}(t) + b(t)\overline{G}(t) =1$. Let $A(t), B(t)\in \mathscr O_\ell[t]$ be lifts of $a(t)$, $b(t)$ respectively. Since $\theta({A(t)F(t) + B(t)G(t)}) = 1$, the polynomial $A(t)F(t) + B(t)G(t)$ is a unit in $\mathscr O_\ell[t]$. This proves the existence of $\widetilde{A}(t), \widetilde{B}(t)$ such that $\widetilde{A}(t) F(t) + \widetilde{B}(t) G(t) = 1$.
\end{proof}

\begin{lemma}[Unique factorization using  monic fundamental irreducible polynomials]\label{lem:tech-1-thm-1}
Let $\mathscr O_{\ell}$ be a local principal ideal ring of length two and $F(t) \in\mathscr O_\ell[t]$ be a monic polynomial such that $\overline{F}(t) = f(t)$ is a separable polynomial in $k[t]$. Then $F(t)$ has a unique factorization into coprime fundamental irreducibles in $\mathscr O_\ell[t]$, up to a permutation of fundamental irreducible factors.
\end{lemma}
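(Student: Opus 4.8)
The plan is to prove existence by lifting the factorization of $f(t)$ over the field $k$ through Hensel's lemma, and to prove uniqueness by reducing modulo $\mathfrak m$ and invoking Snapper's uniqueness clause, afterward upgrading ``associated'' to genuine equality of the monic factors.

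For existence, I first note that since $\mathscr O_\ell$ is a finite local ring, its maximal ideal $\mathfrak m$ is nilpotent, so $\mathscr O_\ell$ is complete with respect to $\mathfrak m$ and \Cref{hensel-1} applies. Reducing $F(t)$ modulo $\mathfrak m$ gives the monic polynomial $f(t)=\overline F(t)$, which by hypothesis is separable; hence in $k[t]$ it factors as $f(t)=f_1(t)\cdots f_r(t)$ into distinct monic irreducibles, and distinctness together with irreducibility makes them pairwise coprime. I then peel the factors off one at a time: writing $f=f_1\cdot(f_2\cdots f_r)$ with $f_1$ monic and coprime to the complementary factor, \Cref{hensel-1} produces a splitting $F=F_1\cdot H_1$ with $F_1$ monic reducing to $f_1$, and comparing leading coefficients shows $H_1$ is again monic with $\overline{H_1}=f_2\cdots f_r$. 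Induction on $r$ yields $F=F_1(t)\cdots F_r(t)$ with each $F_i$ monic and $\overline{F_i}=f_i$ irreducible, so each $F_i$ is a fundamental irreducible, and by \Cref{lem:F-G-fund-irred} the pairwise coprimality of the $f_i$ lifts to pairwise coprimality of the $F_i$ in $\mathscr O_\ell[t]$.

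For uniqueness, suppose $F=F_1\cdots F_r=G_1\cdots G_s$ are two factorizations into pairwise coprime monic fundamental irreducibles. The ring $\mathscr O_\ell[t]$ has nilradical $\mathfrak m[t]$ with quotient $k[t]$, an integral principal ideal ring, so \Cref{lem:snap-factor} applies. Each factor is a non-nilpotent non-unit, since its reduction is a nonzero non-unit of $k[t]$; moreover an irreducible of the PID $k[t]$ generates a prime, hence primary, ideal, so by \Cref{lem:primary-modulo} every $F_i$ and $G_j$ is primary and non-nilpotent. The uniqueness clause \Cref{lem:snap-factor}(2), applied with trivial units, then gives $r=s$ and a permutation $\pi$ under which $F_i$ is associated to $G_{\pi(i)}$. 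To upgrade this to equality, I reduce modulo $\mathfrak m$: by unique factorization in $k[t]$ the matched reductions coincide, $\overline{F_i}=\overline{G_{\pi(i)}}=f_i$, so the connecting unit $v\in\mathscr O_\ell[t]$ with $F_i=vG_{\pi(i)}$ reduces to $1$. Writing $v=u_0+u_1t+\cdots+u_et^e$ with $u_0\equiv 1$ and $u_1,\dots,u_e\in\mathfrak m$, the coefficient of $t^{d+e}$ in $vG_{\pi(i)}$ equals $u_e$ (as $G_{\pi(i)}$ is monic of degree $d$); since $F_i$ has degree $d$ this forces $u_e=0$ when $e\ge 1$, so $v=u_0$ is a constant unit, and then matching leading coefficients gives $u_0=1$, whence $F_i=G_{\pi(i)}$. (Alternatively, the uniqueness statement in \Cref{hensel-1} applied at each peeling step shows directly that the monic lift matching each $f_i$ is determined by $F$, giving a self-contained proof of uniqueness.)

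The routine part is existence; the delicate point is uniqueness, and in particular the passage from Snapper's ``unique up to associates'' to genuine equality of the monic fundamental irreducible factors. This is where the two hypotheses do the real work: \emph{separability} guarantees that the reductions are distinct and pairwise coprime, so that the lifted factors are honestly coprime and the ``coprime in pairs'' hypothesis of \Cref{lem:snap-factor} is met, while \emph{monicity} rigidifies each associate class to a single representative. I also take care that the iterated application of \Cref{hensel-1} keeps each intermediate cofactor monic, which follows by matching leading coefficients at every stage.
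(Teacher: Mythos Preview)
Your proof is correct and follows essentially the same approach as the paper: existence via iterated application of Hensel's lemma (\Cref{hensel-1}) to peel off one monic fundamental irreducible at a time, and uniqueness via Snapper's clause in \Cref{lem:snap-factor} to match factors as associates, followed by a degree/leading-coefficient argument exploiting monicity to force the connecting unit to be $1$. The only cosmetic difference is that you deduce $\overline{F_i}=\overline{G_{\pi(i)}}$ from the associate relation itself, whereas the paper arranges the matching with the $f_i$ upfront; your parenthetical alternative (using the uniqueness clause of \Cref{hensel-1} directly) is a nice bonus not mentioned in the paper.
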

\begin{proof}
We will first show the existence of such a factorization and use \Cref{lem:snap-factor} to prove its uniqueness.

Let $F(t)\in \mathscr O_\ell[t]$ be a monic polynomial such that $\overline{F}(t) = f_1(t)f_2(t)\cdots f_r(t)$, where $f_i(t)$ are distinct irreducibles in $k[t]$ for $1\leq i\leq r$.
Without loss of generality, let us take $f_i(t)$ to be a monic polynomial (if not monic, divide by the leading coefficient) for some fixed $i$ and $g_1(t) = f_2(t)\cdots f_r(t)$.
Clearly $\gcd(f_1(t), g_1(t)) = 1$. 
Then by Hensel lemma version 1, \Cref{hensel-1}, we get polynomials $F_1(t)$ and $G_1(t)$ in $\mathscr O_\ell[t]$ such that $F(t) = F_1(t)G_1(t)$ where $\overline{F}_1(t) =f_1(t)$ and $\overline{G}_1(t) =g_1(t)$ in $k[t]$ and $F_1(t)$ is monic.
Now as $F_1(t)$ is monic and $f_1(t)$ is irreducible in $k[t]$, by \Cref{lem:f-irred-F-irred}, $F_1(t)$ must be irreducible in $\mathscr O_\ell[t]$ and hence is a fundamental irreducible.

Next, consider the polynomial ${g}_2(t) = f_3(t)\cdots f_r(t).$ 
Then, $\gcd(f_2(t), {g}_2(t))=1$. Moreover, $f_2(t)$ is monic irreducible and $g_1(t) = f_2(t){g}_2(t)$. Then by Hensel's lemma version 1, \Cref{hensel-1}, we get a factorization of $G_1(t)$ in $R[t]$ as $G_1(t) = F_2(t)G_2(t)$; where $F_2(t)$ is monic and $\overline{F}_2 =f_2, \overline{G}_2={g_2}$. 
Now, $F_2(t)$ must be fundamental irreducible. Hence, we get $F(t)=F_1(t) F_2(t) G_2(t)$. Continuing this process we achieve a factorization (with ordering) $F(t) =F_1(t)\cdots F_{r-1}(t)F_r(t)$ where each $F_i(t)$ is fundamentally irreducible and $F_1(t),\ldots, F_{r-1}(t)$ are monic. This implies $\deg(F_i(t)) = \deg(f_i(t))$ for $1\leq i\leq r$. As $F(t)$ is monic, this forces $F_r(t)$ to be monic (compare the leading term coefficient, as in \Cref{lem:f-irred-F-irred}). Note that all $F_i(t)$ are pairwise distinct, because otherwise $f_i(t)$ will not be distinct for $1\leq i\leq r$.

Now to prove uniqueness, if possible let there be an another decomposition of $F(t)$ in monic fundamental irreducibles as follows $$F(t)= H_1(t)H_2(t)\cdots H_{r'}(t)$$ such that all $H_i(t)$ are non-constant monic fundamental irreducible in $R[t]$ (this forces $r=r'$) and $\theta({H_i})=f_i$, $\deg(H_i(t))=\deg(f_i(t))$ for each $1\leq i\leq r$.
Consider $A=R[t]$ and $N(A)=\mathfrak{m}[t]$ in \Cref{lem:snap-factor}.
Clearly $F_1(t), F_2(t),\ldots, F_r(t)$ are primary (since $f_i$ is irreducible, hence $\langle f_i\rangle$ is prime, and thus \Cref{lem:primary-modulo} is applicable) non-units and they are relatively divisorless in pairs.
Moreover the same is true for $H_1(t),H_2(t),\ldots,H_r(t)$ also.
Then by \Cref{lem:snap-factor}  we get , after a suitable ordering $\langle F_i(t)\rangle=\langle H_i(t)\rangle$ for $1\leq i\leq r$. 
Therefore $H_i(t)=\alpha_i(t)F_i(t)$ where $\alpha_i(t)\in R[t]$ for $1\leq i\leq r$. 
As $H_i(t)$ is a monic irreducible polynomial, $\alpha_i(t)$ must be a unit in $\mathscr O_\ell[t]$.
This implies that $\alpha_i(t)=a_i+m_i(t)$ for each fixed $i$; where $a_i\in R^{\times}$ and $m_i[t]\in \mathfrak{m}[t].$ 

If possible, let $m_i(t)$ be non-constant.
As $F_i(t)$ is monic therefore the degree of $\alpha_i(t)F_i(t)$ exceeds the degree of $F_i(t)$. Which is a contradiction (since $\deg(H_i(t))$ is same as $\deg(F_i(t))$ by assumption). 
Hence, each $\alpha_i(t)$ should be a constant and that should be $1$.
Hence after the suitable ordering we get $F_i(t)=H_i(t)$ for $1\leq i\leq r$. So the decomposition is unique.
\end{proof}
The factors appearing in the factorization of $F(t) \in \mathscr O_\ell[t]$ for which $\theta(F)$ is separable, as in \Cref{lem:tech-1-thm-1}, are referred to as the \emph{fundamental factors} of $F$.
For later use, we record the following result.
\begin{lemma}\label{lem:cay-ham}
    Let $\mathscr R$ be a commutative ring with unity. For a matrix $A\in\M_n(\mathscr R)$ one has $\Char_{\mathscr R,A}(A)=\mathbf 0$.
\end{lemma}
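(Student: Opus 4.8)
The statement to prove is the Cayley--Hamilton theorem over an arbitrary commutative ring $\mathscr R$: for $A \in \M_n(\mathscr R)$, we have $\Char_{\mathscr R, A}(A) = \mathbf 0$, where $\Char_{\mathscr R, A}(t) = \det(tI - A)$ is the characteristic polynomial.

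\medskip

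The plan is to use the classical adjugate (adjoint matrix) argument, which works verbatim over any commutative ring with unity. First I would work in the ring $\mathscr R[t]$ and consider the matrix $tI - A \in \M_n(\mathscr R[t])$. Let $B = \operatorname{adj}(tI - A)$ denote its adjugate, whose entries are the signed $(n-1)\times(n-1)$ minors of $tI - A$ and hence are polynomials in $t$ of degree at most $n-1$ with coefficients in $\mathscr R$. The fundamental identity relating a matrix to its adjugate, namely $\operatorname{adj}(M)\, M = \det(M)\, I$, is a polynomial identity in the matrix entries and therefore holds over $\mathscr R$; applying it to $M = tI - A$ gives
\begin{equation}\label{eq:adjugate-identity}
B\,(tI - A) = \Char_{\mathscr R, A}(t)\, I
\end{equation}
as an identity in $\M_n(\mathscr R[t])$.

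\medskip

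Next I would extract coefficients. Write $B = \sum_{i=0}^{n-1} B_i t^i$ with each $B_i \in \M_n(\mathscr R)$, and write $\Char_{\mathscr R, A}(t) = \sum_{j=0}^{n} c_j t^j$ with $c_n = 1$. Expanding the left-hand side of \eqref{eq:adjugate-identity} and comparing the coefficient of each power $t^j$ on both sides yields a system of matrix equations relating the $B_i$, $A$, and the scalars $c_j I$. The key move is then to multiply the coefficient equation of $t^j$ on the right by $A^j$ and sum over $j$ from $0$ to $n$: the left-hand side telescopes, since the cross terms cancel in pairs, leaving $\mathbf 0$, while the right-hand side collapses to $\sum_{j=0}^n c_j A^j = \Char_{\mathscr R, A}(A)$. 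This establishes the claim.

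\medskip

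The main subtlety to flag — and the reason one cannot simply ``substitute $t = A$'' into \eqref{eq:adjugate-identity} — is that $\M_n(\mathscr R)$ is noncommutative, so evaluation of a matrix-coefficient polynomial at a matrix argument is not a ring homomorphism and the naive substitution is not a priori valid. The telescoping-sum bookkeeping is precisely what legitimately replaces the substitution, and it is the step requiring care; it relies only on the commutativity of $\mathscr R$ (so that the entries of $B$ and the scalars $c_j$ commute with $A$) and on the fact that $A$ commutes with itself. No finiteness, integrality, or field hypothesis on $\mathscr R$ is needed, so the result holds in the generality stated. Alternatively, one could invoke the standard reference result, but since the proof is short and self-contained I would prefer to include the adjugate argument directly.
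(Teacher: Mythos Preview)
Your adjugate argument is correct and is the standard self-contained proof of Cayley--Hamilton over a commutative ring. The paper, however, does not prove this lemma at all: it merely records the statement for later use, treating it as a known result. So there is nothing to compare against; your proof is a welcome addition where the paper simply cites the fact.
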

We define an $L$-power polynomial as follows, which generalizes the definition in the case of a field, see \cite{KunduSinghGL}.
\begin{definition}
Let $\R$ be a local ring of finite length. A monic (fundamental) irreducible polynomial $F(t)\in \R[t]$ with $\deg F = d$ is called an $L$-power polynomial if $F(t^L)$ has a monic (fundamental) irreducible factor $G(t)\in R[t]$ of degree $d$.
\end{definition}
\begin{example}\label{ex:L-power-polynomial}
We note down some examples of $L$-power polynomials for different choices of local rings of length two. 
\begin{enumerate}
\item For $\R=\Z/9\Z$, the polynomial $F(t)=t^2 - 6t + 7 $ is a $2$-power polynomial, as $F(t^2)=(t^2 + 4t + 5)  (t^2 + 5t + 5)$.
\item Consider $\R=\mathbb{F}_3[u]/(u^2)$ and the polynomial $F(t)=t^2-2\Bar{u}t+(\Bar{u}+1)$, where $\Bar{u}=u+(u^2)$ in $\mathscr R$. 
Since $F(t^2)=t^4-2\Bar{u}t^2+(\Bar{u}+1)=F(t^2)=[t^2+(\Bar{u}+2)t+(\Bar{u}+2)][t^2-(\Bar{u}+2)t+(\Bar{u}+2)]$, and, $V(t)=t^2+(\Bar{u}+2)t+(\Bar{u}+2)$ is a degree $2$ monic irreducible factor of $F(t^2)$. 
As the reduction of $V(t)$ under $\theta$, the polynomial $v(t)=t^2+2t+2\in \mathbb{F}_3[t]$ is an irreducible polynomial, $F(t)$ is a 2 power polynomial in $\mathscr R[t]$.
\end{enumerate}
\end{example}
\section{Characteristic, minimal polynomials and conjugacy of elements}\label{sec:prep}
Characteristic and minimal polynomials play a crucial role in the study of conjugacy in $\GL_n(k)$, where $k$ is a field.  
However, these notions are far less understood for elements of $\GL_n(\R)$.  
In the first subsection, we establish some preliminary results on these polynomials in the context of $\GL_n(\R)$.  
In the latter part of this section, we provide an overview of conjugacy classes in $\GL_n(\R)$, with particular emphasis on the classification of conjugacy classes in $\GL_2(\mathbb{Z}/p^\ell\mathbb{Z})$ for $\ell \geq 2$.  

Since our interest lies in studying power maps, we approach them through their action on conjugacy classes.  
While the classification of conjugacy classes in $\GL_n(k)$ is carried out via canonical form theory, which relies on polynomial factorization over $k$, extending this approach to $\GL_n(\R)$ is considerably more difficult: both conjugacy classification and polynomial factorization over $\R$ present substantial challenges.
\subsection{Characteristic and minimal polynomials for some special matrices}\label{subsec-min-char}

Let $\mathscr{R}$ be a ring. For a matrix $A\in \M_n(\mathscr R)$, consider the map $\Theta_A\colon \mathscr R[t]\rightarrow \M_n(\mathscr R)$ defined by the evaluation at $A$, viz. $\Theta_A(f(t))= f(A)$.
The kernel of this map is called the \emph{null ideal} of $A$ and is denoted by $N_A = \left\{F(t)\in \mathscr R[t] \mid F(A) = 0\right\}$. 
By \Cref{lem:cay-ham}, $N_A\neq\emptyset$.
In the literature, there are several cases studied when $N_A$ is principal, for example, the case of $\mathscr R$ being a commutative ring with unity has been considered in \cite{Brown2005}. Throughout this subsection, we will consider the case for $A\in\GL_n(\R)$ with the property that $\overline{A} \in \GL_n(k)$ has an irreducible characteristic polynomial. We will show that in this case, $N_A$ will be principal. 

Let us recall the notions of characteristic and minimal polynomials.
The \emph{characteristic polynomial} for $A\in \GL_n(\R)$ is defined by $\Char_{\R,A}(t)=\det(tI-A)\in \R[t]$. 
A polynomial $F(t)\in N_A$ such that $F(t)$ is monic and of the smallest degree will be called a  \emph{minimal polynomial} for $A$ over $\R$.
Note that it may not be unique in general (unlike the field case), but in the situation we deal with, it will be unique.
At the end of this section, we will see an example where the minimal polynomial is not uniquely determined.
To begin, we have the following lemma.
\begin{lemma}\label{lem:null-ideal-prin}
Let $A\in GL_n(\R)$. Suppose there exists monic $F(t)\in N_A$ such that $\deg(\overline{F}(t))=\deg(\mathrm{Min}_{k,\overline{A}}(t))$. 
Then, the null ideal $N_A$ is principal and is generated by the polynomial $F(t)\in \R[t]$.
\end{lemma}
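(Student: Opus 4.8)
The plan is to show $N_A = \langle F(t)\rangle$ by proving that $F$ divides every element of $N_A$ in $\R[t]$; since $F\in N_A$, the reverse inclusion $\langle F\rangle\subseteq N_A$ is immediate. The whole argument is driven by the observation that the hypothesis identifies $\overline{F}$ with the field-level minimal polynomial, and that divisibility over the field can then be lifted back to $\R[t]$ using the length-two structure.

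First I would pin down $\overline{F}$. Since $F$ is monic, $\overline{F}$ is monic of degree $\deg F$, and $F(A)=0$ forces $\overline{F}(\overline{A})=0$, so $\mathrm{Min}_{k,\overline{A}}(t)$ divides $\overline{F}(t)$ in $k[t]$. The hypothesis $\deg\overline{F}=\deg\mathrm{Min}_{k,\overline{A}}$, together with both polynomials being monic, then gives $\overline{F}(t)=\mathrm{Min}_{k,\overline{A}}(t)$ exactly. Next, for an arbitrary $G(t)\in N_A$, I would divide by the monic polynomial $F$ (division by a monic polynomial is valid over any commutative ring), obtaining $G=QF+R$ with $\deg R<\deg F$. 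Then $R(A)=G(A)-Q(A)F(A)=0$, so $R\in N_A$, and everything reduces to showing $R=0$.

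The heart of the matter -- and the step I expect to be the main obstacle -- is ruling out a nonzero remainder, which is exactly where the assumption $\ell=2$ (equivalently $\mathfrak m^2=0$, i.e. $a^2=0$ with $\mathrm{Ann}_{\R}(a)=\mathfrak m$) must be used. Reducing modulo $\mathfrak m$, the relation $R(A)=0$ yields $\overline{R}(\overline{A})=0$, so $\overline{F}=\mathrm{Min}_{k,\overline{A}}$ divides $\overline{R}$; but $\deg\overline{R}\le\deg R<\deg F=\deg\overline{F}$, forcing $\overline{R}=0$. Hence every coefficient of $R$ lies in $\mathfrak m=\langle a\rangle$, so $R=aS$ for some $S\in\R[t]$ with $\deg S<\deg F$. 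Now $a\,S(A)=R(A)=0$ means every entry of $S(A)$ is annihilated by $a$, hence lies in $\mathrm{Ann}_{\R}(a)=\mathfrak m$; thus $\overline{S}(\overline{A})=\overline{S(A)}=0$, and once more $\overline{F}\mid\overline{S}$ with $\deg\overline{S}<\deg\overline{F}$ forces $\overline{S}=0$. Therefore every coefficient of $S$ lies in $\mathfrak m$, so the coefficients of $R=aS$ lie in $a\mathfrak m=a^2\R=0$; that is, $R=0$, and consequently $F\mid G$. This double reduction -- first showing $\overline{R}=0$, then peeling off a single factor of $a$ and reducing again -- is the crux, and it is precisely the place where the length-two hypothesis is indispensable (over a ring of longer length one would need a longer descent, and the clean principality conclusion could fail).
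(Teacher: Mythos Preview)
Your proof is correct and follows essentially the same approach as the paper's: divide an arbitrary $G\in N_A$ by the monic $F$, then use the minimality of $\overline{F}=\mathrm{Min}_{k,\overline{A}}$ twice---once to force the remainder into $\mathfrak m[t]$, and once more after peeling off the factor $\pi$---exploiting $\mathfrak m^2=0$ to conclude $R=0$. The only cosmetic difference is that the paper organizes the argument as a case split (some coefficient of $R$ a unit versus $R\in\mathfrak m[t]$) and chooses $S$ with unit leading coefficient to reach an immediate contradiction, whereas you run the reduction one extra step to get $R=aS\in a\mathfrak m[t]=0$ directly; the content is the same.
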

\begin{proof}
Let $G(t)\in N_A$. Then, by \cite[Theorem 2.14]{JacobsonAlgebraBook85}, there exists polynomials $Z(t)$ and $R(t)$ with either $R(t)=0$ or $\deg(R(t)) < \deg(F(t))$ such that $$G(t) = Z(t)F(t) + R(t).$$ 
Now, if possible, let $R(t)$ be non-zero. Then $R(t) \in N_A$ and $\deg(R(t))<\deg(F(t))$. We consider two cases now. 
        
If possible, suppose at least one of the coefficients of $R(t)$ is a unit in $\R$. Then, after reduction, $\theta(R(t)) = {r(t)}$ will be an annihilating polynomial of $\overline{A}$ in $k[t]$. This contradicts the minimality of $\deg\overline{F}(t)$.
So, this case is not possible. 
        
Thus, we assume that $R(t) \in \mathfrak{m}[t]$.  
Since $\R$ is a local principal ideal ring, we may write $\mathfrak{m} = \langle \pi \rangle$.  
Then $R(t) = \pi S(t)$ for some $S(t) \in \R[t]$ with unit leading coefficient; this is possible since $\R$ is a local ring of length two.  
Moreover, $\deg R(t) = \deg S(t)$.  
Since $R \in N_A$, we have $\mathbf{0} = R(A) = \pi S(A)$, and hence $S(A) \in \M_n(\mathfrak{m})$.  
Applying $\theta$ gives $\overline{S}(\overline{A}) = \mathbf{0}$, which contradicts the minimality of $\deg \mathrm{Min}_{k}(A)$.
\end{proof}
\begin{lemma}\label{lem:min=char-regsem}
Let $A\in GL_n(\R)$ such that $\overline{A} \in GL_n(k)$ has irreducible characteristic polynomial. Then a minimal polynomial of $A$ over $\R$ is uniquely determined, which is the irreducible characteristic polynomial of $A$ over $\R$.
\end{lemma}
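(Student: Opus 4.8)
The plan is to show first that $\Char_{\R,A}(t)$ itself lies in the null ideal $N_A$ and in fact generates it, and then to extract uniqueness of the minimal polynomial from the fact that $N_A$ is generated by a \emph{monic} polynomial of the correct degree.

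First I would record the behaviour of the characteristic polynomial under reduction. Since $\theta\colon\R\to k$ is a ring homomorphism and the determinant commutes with ring homomorphisms applied entrywise, one has $\theta(\Char_{\R,A}(t)) = \Char_{k,\overline{A}}(t)$. By hypothesis the right-hand side is irreducible of degree $n$ in $k[t]$, so $\Char_{\R,A}(t)$ is a monic polynomial whose reduction is irreducible; that is, it is a fundamental irreducible polynomial in $\R[t]$. By \Cref{lem:f-irred-F-irred} it is therefore irreducible over $\R$, which already settles the assertion that the characteristic polynomial of $A$ over $\R$ is irreducible.

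Next I would identify $N_A$. Because $\Char_{k,\overline{A}}(t)$ is irreducible of degree $n$, the minimal polynomial $\mathrm{Min}_{k,\overline{A}}(t)$ coincides with it, so $\deg \mathrm{Min}_{k,\overline{A}}(t)=n$. By Cayley--Hamilton (\Cref{lem:cay-ham}), $\Char_{\R,A}(A)=\mathbf 0$, so $F(t):=\Char_{\R,A}(t)$ is a monic element of $N_A$ with $\deg\overline{F}(t)=n=\deg\mathrm{Min}_{k,\overline{A}}(t)$. This is exactly the hypothesis of \Cref{lem:null-ideal-prin}, which then gives that $N_A$ is principal and generated by $F(t)=\Char_{\R,A}(t)$.

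Finally, for the uniqueness of the minimal polynomial, I expect the only genuinely delicate point to be that $\R$ is not a domain, so that naive degree arguments could fail; the remedy is precisely that the generator $F(t)$ is monic. Indeed, for any nonzero $P(t)\in\R[t]$ the product $P(t)F(t)$ has leading term equal to the leading coefficient of $P$ times $t^{\deg P + n}$, since $F$ is monic; hence $\deg(PF)=\deg P + n\geq n$, with equality precisely when $P$ is a nonzero constant. Thus every nonzero element of $N_A=\langle F(t)\rangle$ has degree at least $n$, and the elements of degree exactly $n$ are the scalar multiples $cF(t)$ with $c\in\R\setminus\{0\}$; among these the only monic one is $F(t)$ itself, since comparing leading coefficients forces $c=1$. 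Consequently a monic polynomial in $N_A$ of least degree must be $\Char_{\R,A}(t)$, proving simultaneously that the minimal polynomial has degree $n$, that it is unique, and that it equals the (irreducible) characteristic polynomial. Everything outside this last degree bookkeeping is a direct assembly of \Cref{lem:cay-ham}, \Cref{lem:null-ideal-prin}, and \Cref{lem:f-irred-F-irred}.
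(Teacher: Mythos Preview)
Your proof is correct and follows the same overall strategy as the paper: show that $\Char_{\R,A}(t)$ is a monic fundamental irreducible (hence irreducible) element of $N_A$, and conclude it is the unique monic annihilator of least degree. Your execution, however, is tighter. The paper takes a detour through Hensel's lemma (\Cref{hensel-1}), writing $f(t)=h(t)\cdot 1$ with $h=\mathrm{Min}_{k,\overline{A}}$, lifting to $F(t)=H(t)W(t)$, and then observing $H=F$, $W=1$ and $H\in N_A$; the last step implicitly relies on Cayley--Hamilton anyway. You instead invoke \Cref{lem:cay-ham} directly and feed the conclusion into \Cref{lem:null-ideal-prin} to obtain $N_A=\langle F(t)\rangle$, a fact the paper essentially re-proves by hand via a degree argument. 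Your final step---using monicity of the generator to see that the monic least-degree element of $N_A$ is unique---is also more explicit than the paper's, which asserts uniqueness without spelling out why two distinct monic degree-$n$ elements of $N_A$ cannot coexist.
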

\begin{proof}
Let $F(t)\in \R[t]$ be the characteristic polynomial for $A$, which is monic. Then ${f}(t)$ will be the characteristic polynomial for $\overline{A}$ in $k[t]$, which is monic and irreducible. So, $F(t)$ must be monic irreducible in $\R[t]$, by \Cref{lem:f-irred-F-irred}.
Now as ${f}(t)$ is irreducible, so minimal polynomial of $\overline{A}$ let say $h(t)\in k[t]$ should be ${f}(t)$ itself. We can write ${f}(t) = h(t).1$. Note that $h(t)$ and $1$ are coprime. By Hensel's lemma version 1 \Cref{hensel-1}, there exist  $H(t), W(t)\in \R[t]$ such that $F(t) = H(t)W(t)$ with $\overline{H}(t)=h(t)$ and $\overline{W}(t)=1$. As $F(t)$ is monic irreducible, it guarantees that $H(t)=F(t)$ and $W(t)=1$, by \Cref{lem:f-irred-F-irred}. Note that $H(t)\in N_A$ as $W(A)$ is invertible.
    
Now, if possible, let there exist a monic polynomial $G(t)\in N_A$ such that $\deg(G(t))<\deg(H(t))$. Then ${g}(t)$ will be an annihilating polynomial of $\overline{A}$ in $k[t]$ such that $\deg({g}(t))< \deg(h(t))$, which leads to a contradiction. So, in this case $H(t)$ is the unique monic annihilating polynomial of $A$ of smallest degree, i.e, the minimal polynomial for $A$ over $\R$, which is nothing but the characteristic polynomial $F(t)$. 
\end{proof}
As a corollary, using \Cref{lem:tech-1-thm-1}, one obtains the following.
\begin{corollary}\label{cor:reg-sem-gen-null-ideal}
 Let $A\in GL_n(\R)$ such that $\overline{A} \in GL_n(k)$ is regular semisimple.
 Then a minimal polynomial of $A$ over $\R$ is uniquely determined, which is the irreducible characteristic polynomial of $A$ over $\R$.   
\end{corollary}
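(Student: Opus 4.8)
The plan is to identify the characteristic polynomial $F(t)=\Char_{\R,A}(t)$ as the unique monic generator of the null ideal $N_A$, using the regular semisimple hypothesis to verify the degree condition of \Cref{lem:null-ideal-prin}. First I would set $F(t)=\Char_{\R,A}(t)$, which is monic of degree $n$ and, by the Cayley--Hamilton identity \Cref{lem:cay-ham}, lies in $N_A$. Its reduction $\overline F(t)=\Char_{k,\overline A}(t)$ is precisely the characteristic polynomial of $\overline A$ over $k$.

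The crucial input is the meaning of the regular semisimple hypothesis at the level of the residue field: an element of $\GL_n(k)$ is regular semisimple exactly when its characteristic polynomial is separable and equals its minimal polynomial. Hence $\deg\overline F(t)=\deg\mathrm{Min}_{k,\overline A}(t)=n$. This is exactly the hypothesis required to apply \Cref{lem:null-ideal-prin} to the monic polynomial $F(t)\in N_A$, and that lemma then gives $N_A=\langle F(t)\rangle$.

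It remains to extract uniqueness of the minimal polynomial from principality, and here I would exploit that $F$ is \emph{monic}. For monic $F$ and any nonzero $G\in\R[t]$ one has $\deg(GF)=\deg G+\deg F$, since the leading coefficient of $GF$ equals that of $G$ (the top coefficient $1$ of $F$ is not a zero-divisor, so no nilpotent can collapse it). Consequently every nonzero element of $\langle F(t)\rangle$ has degree at least $n$, so $F$ realizes the smallest degree among monic members of $N_A$; and if $M(t)$ is any monic minimal polynomial, then $M=GF$ with $\deg M=\deg F=n$ forces $\deg G=0$, and comparing leading coefficients of the two monic polynomials yields $G=1$, whence $M=F$. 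Thus the minimal polynomial of $A$ over $\R$ is uniquely the characteristic polynomial $\Char_{\R,A}(t)$, and \Cref{lem:tech-1-thm-1} records in addition that this polynomial factors uniquely into the fundamental irreducibles lifting the distinct irreducible factors of $\overline F$.

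I do not expect a genuine obstacle, as this result is a direct application of \Cref{lem:null-ideal-prin} once the field-level equality $\deg\Char_{k,\overline A}=\deg\mathrm{Min}_{k,\overline A}$ is invoked; the only point needing care is that $\R[t]$ is not an integral domain, which is handled uniformly by the monicity of $F$ in every degree comparison. (Since a regular semisimple $\overline A$ has separable but generally reducible characteristic polynomial, the phrase ``irreducible characteristic polynomial'' in the statement should be read as its factorization into fundamental irreducibles, which does not affect the argument.)
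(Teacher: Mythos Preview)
Your argument is correct. You apply \Cref{lem:null-ideal-prin} directly to $F=\Char_{\R,A}$, using that regular semisimplicity of $\overline A$ gives $\mathrm{Min}_{k,\overline A}=\Char_{k,\overline A}$, and then extract uniqueness from the monicity of the generator; every step is sound, including the careful degree bookkeeping in the non-domain $\R[t]$.

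The paper's one-line proof (``As a corollary, using \Cref{lem:tech-1-thm-1}'') points to a slightly different route: factor $\Char_{\R,A}$ uniquely into coprime fundamental irreducibles via \Cref{lem:tech-1-thm-1}, split $\R^n$ by the Chinese Remainder Theorem into blocks with irreducible characteristic polynomial, and invoke the preceding \Cref{lem:min=char-regsem} on each block. Your approach bypasses this block decomposition entirely by feeding the global characteristic polynomial straight into \Cref{lem:null-ideal-prin}; this is more economical and makes transparent that nothing beyond the equality $\deg\mathrm{Min}_{k,\overline A}=\deg\Char_{k,\overline A}$ is needed. The paper's route, on the other hand, explains the otherwise puzzling word ``irreducible'' in the statement (it is really shorthand for the fundamental-irreducible factorization), which you also correctly flag at the end.
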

Let $A\in \GL_n(\R)$ such that $\overline{A} \in \GL_n(k)$ has irreducible characteristic polynomial. 
Then, in this case, the minimal polynomial of $A$ over $\R$ is the unique monic polynomial of smallest degree that generates the null ideal $N_A$. 
However, this property does not hold in general. 
We provide two examples of such matrices,  one non-invertible and one invertible, whose minimal polynomial is not unique. 
Although this fact may be familiar to experts, we include these examples for the reader's convenience.

\begin{example}
Fix $\R = \mathbb{Z}/p^2\mathbb{Z}$ where $p$ is an odd prime. 
Then $\R$ is a local principal ideal ring of length two with unique maximal ideal $\langle p\rangle = p\R$. 
The nilpotency index of this maximal ideal is $2$. 
It is clear that the annihilator $\mathrm{Ann}_{\R}(p)= \langle p\rangle=p\R$. 
\begin{enumerate}
\item Take a non-invertible matrix, $B= \begin{pmatrix} 0 & p \\ 0 & 0 \end{pmatrix}\in \M_2(\R)$.
The corresponding null ideal is $N_B=\langle  t^2, \mathrm{Ann}_R(p)t \rangle = \langle t^2,pt\rangle$, (see \cite[Lemma 2.1]{Brown2005}) which is not principal in $\R[t]$. Now, if the minimal polynomial of $B$ is uniquely determined, then $N_B$ should be principal. So, for this $B$, a minimal polynomial is not uniquely determined. Note that the polynomials $F_1(t) = t^2$ and $F_2(t) = t^2+pt$ both are smallest degree monic annihilating polynomials for $B$ in $\R[t]$. So, they are the two distinct minimal polynomials for $B$.
\item Similarly, we can construct such an example for an invertible matrix. Let $D= \begin{pmatrix}  1 & p \\ 0 & 1 \end{pmatrix}\in \GL_2(\R)$. Note that $F_1(t)=t^2-2t+1$ and $F_2(t)= t^2+(p-2)t+(1-p)$ are two distinct minimal polynomials for $D$ in $\R[t]$.       
\end{enumerate}
\end{example}
A natural question arising from the example is the following: given a matrix $A \in \GL_n(\R)$, if $F(t)$ is any monic lift of $\mathrm{Min}_{k,\overline{A}}(t)$, must it always hold that $F(A)=\mathbf 0$? 
The answer is negative, as demonstrated by the following example.
\begin{example}
    Consider $\R=\Z/9\Z$ and $A=\begin{pmatrix}
        0 & -1\\
        1 & -1
    \end{pmatrix}\in\M_2(\Z/9\Z)$.
    Then $F(t)=t^2+t+4$ is a lift of $\mathrm{Min}_{\mathbb F_3,\overline{A}}(t)$, however $F(A)=\begin{pmatrix}
        3 & 0\\&3
    \end{pmatrix}\neq \mathbf 0$.
\end{example}
\subsection{Conjugacy classes in $\GL_n(R)$} 

For a group $\mathscr G$, the image of a power map is invariant under the conjugation action (because $(ghg^{-1})^L = gh^Lg^{-1}$ for all $g, h\in\mathscr G$ and $L\in\Z$). Thus, to identify the elements of $\mathscr G$ which occur in the image of the power map, it is enough to identify those conjugacy classes of $\mathscr G$ lying in the image. A similar approach was used while dealing with finite general linear groups, orthogonal \& symplectic groups, and unitary groups, see \cite{KunduSinghGL}, \cite{PanjaSinghSymplectic25}, and \cite{PanjaSinghUnitary24} for the respective cases. When $R$ is a local ring of length $\ell$ (such as $\mathbb{Z}/p^\ell \mathbb{Z}$ or $\mathbb{F}_q[t]/\langle t^\ell\rangle$), describing the conjugacy classes in $\GL_n(R)$ becomes a difficult problem. According to Hill (see \cite{Hill95}), this difficulty arises because having such a description for all $\ell\geq 2$ is equivalent to classifying indecomposable $\widetilde{\mathscr{O}}_r$-lattices for all cyclic $p$-groups. This is a classification problem known to be wild (see \cite{GudivokPogorilyak89}). Here, $\mathscr{O}$ denotes the ring of integers of a $p$-adic field $K$, and $\widetilde{\mathscr{O}}$ is the ring of integers of $\widetilde{K}$, the maximal unramified extension of $K$ in an algebraic closure. We write $\widetilde{\mathscr{O}}_r = \widetilde{\mathscr{O}}/p^r \widetilde{\mathscr{O}}$. Nevertheless, there has been progress in addressing this problem over the years in the case $\ell = 2$; see \cite{Singla2010b, PrasadSinglaSpallone2015}. 



\subsection{Conjugacy classes in the group $\mathrm{GL}_2(\Z/p^\ell\Z)$}\label{subsec:conj-gl}

In this subsection, $\R=\Z/p^2\Z$. We list down the similarity classes of matrices in $\GL_2(\R).$ There are four types of similarity classes $viz.$
\begin{enumerate}
\item $S(\alpha)  : \left( \begin{array}{cc} \alpha & 0 \\
0 & \alpha\end{array} \right);  \alpha\in \R^{\times}$
\item $D(\alpha,\delta,i)  :  \left( \begin{array}{cc} \alpha & 0 \\
0 & \delta\end{array} \right);   \alpha,\delta\in \R^{\times};\alpha-\delta\in p^iR^{\times}; 0\leq i<2$
\item $H(\alpha,\beta,i) :  \left( \begin{array}{cc} \alpha & p^{i+1}\beta \\
p^i & \alpha\end{array} \right); \alpha\in \R^{\times},\beta\in \R/p^{l-i-1}R; 0\leq i<2 $
\item $H'(\alpha,\beta,i)  :  \left( \begin{array}{cc} \alpha & p^i\epsilon\beta \\
p^i\beta & \alpha\end{array} \right); \alpha\in \R,\beta\in \R^{\times}, \alpha^2-\epsilon\beta^2p^{2i}\in \R^{\times}; 0\leq i<2; \epsilon$  is a fixed non square unit in $\R$.

\end{enumerate}

\section{Regular semisimple matrices as $L$-th power in $\mathrm{GL}_n(\R)$}\label{sec:separable}

An element $A\in \GL_n(\R)$ is said to be \emph{regular semisimple} if $\overline{A} \in\GL_n(k)$ is regular semisimple. This section deals with when a matrix of this kind in $\GL_n(\R)$ is a power. This problem over a field is dealt with in~\cite{KunduSinghGL}. We begin with showing that a regular semisimple element $A\in\GL_n(\R)$ has its characteristic polynomial $\Char_{\R, A}(t)$ a product of distinct fundamental irreducible polynomials from $\R[t]$. Then, similar to the field case, we show that such a matrix $A$ is in the image of $L$-power map if and only if each fundamental irreducible factor of $\Char_{\R, A}(t)$ is an $L$-power polynomial.
\subsection{Centralizer of a regular semisimple element in $\GL_n(\R)$}

Let $A \in \GL_n(\R)$ be a regular semisimple element. The centralizer of $\overline A \in \GL_n(k)$ is well known (see, for example, \cite[Theorem 2.3.11]{Singla2010}), and it is $k[\overline{A}] \cong \mathbb{F}_{q^n}$. Furthermore, viewing $\overline{A}$ as an element of $\M_n(\Fq)$, we have
\begin{align*}
\Zen_{\M_n(\Fq)}(\overline{A}) \cong \mathrm{End}_{\Fq[t]}(M^{\overline{A}}, M^{\overline{A}}).
\end{align*}
It is known that this isomorphism continues to hold when $\Fq$ is replaced by a commutative ring $R$ with unity. We include an explanation for this for the sake of completeness. 

Let $R$ be a commutative ring with unity. Let $A\in \M_n(R)$, and take $X\in \Zen_{M_n(R)}(A) $. Consider $M = R^n$ as an $R[t]$ module by $t\cdot v = Av$.
Define the map 
\begin{align*}
\phi_X\colon M^A\longrightarrow M^A\text{ by } v\mapsto X(v).
\end{align*}
Consider the map $$\Psi\colon \Zen_{M_n(R)}(A) \longrightarrow \mathrm{End}_{R[t]}(M^A, M^A)\text{ by } X\mapsto \phi_X.$$
Then, $\Psi$ is a ring homomorphism, with $\mathrm{Ker}(\Psi) = \left\{X\in \Zen_{M_n(R)}(A) \mid X(v)=0\text{ for all } v\in M^A\right\} =\left\{\mathbf 0\right\}.$ 
Hence, $\Psi$ is injective. 
Moreover, if we take any $T\in\mathrm{End}_{R[t]}(M^A, M^A)$, then from the action of $t$ on $M$ (which is by $A$) we have $T(t.v)=t.(Tv)$ which implies $T(Av)= A(Tv)$ for all $v\in M^A$. Hence $TA = AT$ and this implies $T\in \Zen_{\M_n(R)}(A) $, whence $\Psi$ is an isomorphism. Before going further, we note the following:
\begin{lemma}\label{lem:triv-hom}
Let $\R$ be a local principal ideal ring of length two. 
Consider the polynomial ring $\R[t]$, and let $I = \langle F(t) \rangle$ and $J = \langle G(t) \rangle$ be ideals in $\R[t]$, where $F(t)$ and $G(t)$ are monic fundamental irreducible polynomials such that $\gcd(\overline{F}(t), \overline{G}(t)) = 1$. Then, $\mathrm{Hom}_{\R[t]}(\R[t]/I,\R[t]/J)$ is zero.
\end{lemma}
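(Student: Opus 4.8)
The plan is to use that any $\R[t]$-module homomorphism out of the cyclic module $\R[t]/I$ is pinned down by the image of a single generator, and then to exploit the Bézout relation that coprimality of $F$ and $G$ supplies. First I would note that $\R[t]/I$ is cyclic, generated by the class $\overline{1}$, and that the annihilator of this generator in $\R[t]$ is exactly $I=\langle F(t)\rangle$. Hence a homomorphism $\phi\in\mathrm{Hom}_{\R[t]}(\R[t]/I,\R[t]/J)$ is completely determined by the element $\phi(\overline{1})$, and it is well defined precisely when $F(t)\cdot\phi(\overline{1})=0$ in $\R[t]/J$. Writing $\phi(\overline{1})=\overline{H(t)}$ for some lift $H(t)\in\R[t]$, this condition reads $F(t)H(t)\in J=\langle G(t)\rangle$, i.e.\ $G(t)\mid F(t)H(t)$ in $\R[t]$. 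Thus the entire problem reduces to showing that this divisibility forces $H(t)\in\langle G(t)\rangle$, equivalently $\overline{H(t)}=0$, and therefore $\phi=0$.

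Next I would bring in coprimality. Since $\overline{F}$ and $\overline{G}$ are coprime in $k[t]$ by hypothesis, \Cref{lem:F-G-fund-irred} guarantees that $F$ and $G$ are coprime already in $\R[t]$; that is, there exist $A(t),B(t)\in\R[t]$ with $A(t)F(t)+B(t)G(t)=1$. This is the one place where the fundamental-irreducible and length-two hypotheses are used, and only through the cited lemma. I would emphasize that because $\R[t]$ is \emph{not} a PID, one cannot appeal to unique factorization of $F$, $G$, $H$, so the explicit Bézout identity is what the argument genuinely rests on.

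Finally I would run the standard divisibility manipulation: multiplying $A(t)F(t)+B(t)G(t)=1$ by $H(t)$ yields $H=AFH+BGH$, and since $G\mid FH$ we may write $FH=GQ$ for some $Q\in\R[t]$, whence $H=A(GQ)+BGH=G(AQ+BH)$, so $G\mid H$ and $\overline{H}=0$ in $\R[t]/J$. Consequently every such $\phi$ vanishes, giving $\mathrm{Hom}_{\R[t]}(\R[t]/I,\R[t]/J)=0$. I do not anticipate a real obstacle here: the only subtlety is to resist PID-style reasoning and instead lean entirely on the Bézout relation provided by \Cref{lem:F-G-fund-irred}.
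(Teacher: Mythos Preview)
Your proof is correct and follows essentially the same route as the paper: determine $\phi$ by the image of the generator $\overline{1}$, observe that $F(t)$ must kill this image so $G\mid FH$, and then use the B\'ezout relation $AF+BG=1$ (which you get from \Cref{lem:F-G-fund-irred}, whereas the paper cites Snapper directly) to multiply through by $H$ and conclude $G\mid H$. The only cosmetic difference is the source cited for coprimality of $F$ and $G$ in $\R[t]$; the logical skeleton is identical.
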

\begin{proof}
Let $\varphi\in \mathrm{Hom}_{\R[t]}(\R[t]/I,\R[t]/J)$ such that $\varphi(1 + I)= v(t) + J$ for some $v(t) \in \R[t]$. Then, $\varphi(B(t) (H(t)+I))=B(t) \varphi(H(t) + I)$ for all $B(t)\in \R[t], H(t) \in \R[t]$. As $F(t) \in I$, $F(t) + I = F(t)(1 +I)=0+I$. This ensures that $\varphi(F(t)(1+I)) = 0+J$ which further implies $F(t)(v +J) = 0+J$ and hence $F(t)v(t)\in J$. Therefore, $F(t) v(t) = G(t) W(t)$ for some $w(t)\in R[t]$. As $\gcd(\overline{F}(t), \overline{H}(t))=1$, the ideals $I$ and $J$ are coprime in $\R[t]$; see  \cite[Section 2b]{Snapper1950}. So, there exist $\alpha(t), \beta(t) \in\R[t]$ such that $\alpha(t) F(t)+ \beta(t) G(t) =1$.
This implies $v(t) =v(t)(\alpha(t) F(t)+ \beta(t) G(t)) = \alpha(t) F(t) v(t) +\beta(t) G(t)v(t) = G(t)(\alpha(t) W(t) + \beta(t) v(t))$, and hence $v(t)\in J$. Thus, $\varphi(1 + I) = 0+J$. Therefore, $\varphi$ is the trivial homomorphism.
\end{proof} 

Now, if $\R$ is a finite local principal ideal ring of length two and $A\in \M_n(\R)$ is a regular semisimple matrix that lies over regular semisimple $\overline{A} \in M_n(k)$, we have the following: 
\begin{lemma}\label{lem:centra-reg-sem-R}
Let $\R$ be a finite local principal ideal ring of length two with its unique maximal ideal $\mathfrak{m}$. Let $A\in \M_n(\R)$ be a regular semisimple element with $\Char_{\R, A}(t) =\prod\limits_{i=1}^rF_i(t)$. Then, $\Zen_{\M_n(\R)}(A)\cong \bigoplus\limits_{i=1}^r \R[C_{F_i}]$ where $C_{F_i}$ is the companion matrix of the fundamental irreducible polynomial $F_i(t)$. Moreover, $|\Zen_{\GL_n(\R)}(A)|=|\mathfrak{m}|^n\prod\limits_{i=1}^r(q^{d_i}-1)$ where $d_i$ is degree of $F_i(t)$.
\end{lemma}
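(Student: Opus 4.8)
The plan is to leverage the ring isomorphism $\Zen_{\M_n(\R)}(A) \cong \mathrm{End}_{\R[t]}(M^A, M^A)$ established just above, and to analyze the $\R[t]$-module $M^A$ directly. Since $\overline{A}$ is regular semisimple, \Cref{cor:reg-sem-gen-null-ideal} shows that $\Char_{\R,A}(t) = \prod_{i=1}^r F_i(t)$ is simultaneously the characteristic polynomial and the unique minimal polynomial of $A$, where the $F_i$ are the pairwise coprime (\Cref{lem:F-G-fund-irred}) fundamental irreducible factors furnished by \Cref{lem:tech-1-thm-1}. By \Cref{lem:cay-ham}, $M^A$ is a module over $\R[t]/\langle\prod_i F_i\rangle$, and the Chinese Remainder Theorem applied to the pairwise coprime ideals $\langle F_i\rangle$ yields a ring decomposition $\R[t]/\langle\prod_i F_i\rangle \cong \bigoplus_i S_i$, where $S_i := \R[t]/\langle F_i\rangle$. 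The associated idempotents split $M^A = \bigoplus_i M_i$ into its $F_i$-primary pieces, each $M_i$ being an $S_i$-module.

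The crucial step is to identify each $M_i$ with $S_i$ as an $\R[t]$-module. I would first reduce modulo $\mathfrak{m}$: since $\overline{A}$ is regular semisimple, $\overline{M_i}$ is the $\overline{F_i}$-component of $M^{\overline{A}}$, namely $k[t]/\langle\overline{F_i}\rangle \cong \mathbb{F}_{q^{d_i}}$, which is one-dimensional (hence cyclic) over $\overline{S_i} = S_i/\mathfrak{m}S_i$. Nakayama's lemma then forces $M_i$ to be a cyclic $S_i$-module, so $M_i \cong S_i/\mathfrak{a}_i$ for some ideal $\mathfrak{a}_i \subseteq S_i$. To see that $\mathfrak{a}_i = 0$ I would use a counting argument: each $S_i$ is free of rank $d_i = \deg F_i$ over $\R$, so $|M_i| \leq |S_i| = |\R|^{d_i}$ with equality precisely when $\mathfrak{a}_i = 0$; since $\prod_i |M_i| = |M^A| = |\R|^n = \prod_i |S_i|$ (using $\sum_i d_i = n$), equality is forced at every $i$, whence $M_i \cong S_i$.

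With $M_i \cong S_i$ in hand, the off-diagonal homomorphisms vanish: $\mathrm{Hom}_{\R[t]}(M_i, M_j) \cong \mathrm{Hom}_{\R[t]}(S_i, S_j) = 0$ for $i \neq j$ by \Cref{lem:triv-hom}, while on the diagonal $\mathrm{End}_{\R[t]}(S_i) \cong S_i$ since $S_i$ is commutative and cyclic as a module over itself. Hence $\Zen_{\M_n(\R)}(A) \cong \bigoplus_i S_i \cong \bigoplus_i \R[C_{F_i}]$, the last identification coming from the fact that the cyclic module realized by the companion matrix $C_{F_i}$ has null ideal $\langle F_i\rangle$, so $\R[C_{F_i}] \cong \R[t]/\langle F_i\rangle = S_i$. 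This establishes the first assertion.

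For the cardinality, I would note that $\Zen_{\GL_n(\R)}(A)$ is exactly the unit group of the ring $\Zen_{\M_n(\R)}(A)$, because the inverse of a matrix commuting with $A$ again commutes with $A$; thus $|\Zen_{\GL_n(\R)}(A)| = \prod_i |S_i^\times|$. Each $S_i$ is local with maximal ideal $\mathfrak{m}S_i$ and residue field $\mathbb{F}_{q^{d_i}}$, so its units are $S_i \setminus \mathfrak{m}S_i$; using $|S_i| = |\R|^{d_i} = (|\mathfrak{m}|q)^{d_i}$ and $|\mathfrak{m}S_i| = |\mathfrak{m}|^{d_i}$ gives $|S_i^\times| = |\mathfrak{m}|^{d_i}(q^{d_i}-1)$. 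Multiplying over $i$ and invoking $\sum_i d_i = n$ yields $|\Zen_{\GL_n(\R)}(A)| = |\mathfrak{m}|^n \prod_{i=1}^r (q^{d_i}-1)$. I expect the identification $M_i \cong S_i$ over the non-field base $\R$ to be the main obstacle, combining Nakayama's lemma with the global cardinality count; the remaining steps are bookkeeping with the Chinese Remainder Theorem, \Cref{lem:triv-hom}, and the order computation.
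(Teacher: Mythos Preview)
Your proposal is correct and follows essentially the same route as the paper: both use the identification $\Zen_{\M_n(\R)}(A)\cong\mathrm{End}_{\R[t]}(M^A)$, split $M^A$ via the Chinese Remainder Theorem applied to $\R[t]/\langle\prod_i F_i\rangle$, kill the off-diagonal $\mathrm{Hom}$'s with \Cref{lem:triv-hom}, identify each diagonal piece $\mathrm{End}_{\R[t]}(S_i)\cong S_i\cong\R[C_{F_i}]$, and finally compute $|S_i^\times|$.

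The one place you are more explicit than the paper is the justification that each primary component $M_i$ is actually isomorphic to $S_i=\R[t]/\langle F_i\rangle$ as an $\R[t]$-module. The paper essentially asserts $M^A_{F_i}\cong\R[t]/\langle F_i\rangle$ directly from the CRT decomposition of $\R[t]/\langle F(t)\rangle$ (implicitly using that $M^A$ is already cyclic, which is established elsewhere for regular semisimple $A$); you instead argue it intrinsically via Nakayama plus the global cardinality equality $\prod_i|M_i|=|\R|^n=\prod_i|S_i|$. That is a clean self-contained argument and arguably tidier than the paper's, but it is a difference in exposition rather than in strategy. Likewise, for the unit count the paper passes through the reduction map $\theta$ and the size of its kernel, while you compute $|S_i^\times|$ from the local ring structure of $S_i$ with maximal ideal $\mathfrak{m}S_i$; both arrive at $|\mathfrak{m}|^{d_i}(q^{d_i}-1)$ by the same arithmetic.
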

\begin{proof} 
Let $\Char_{\R, A}(t) = \prod\limits_{i=1}^r F_i(t)$, a factorization into monic fundamental irreducible polynomials, due to \Cref{lem:tech-1-thm-1}. From the discussion preceding this lemma, 
\begin{align*}
\mathrm{End}_{\R[t]}(M^A,M^A)\cong \bigoplus\limits_{i=1}^r \mathrm{End}_{\R[t]}(M^A_{F_i},M^A_{F_i}),
\end{align*}
where $M^A_{F_i} \cong \R[t]/\langle F_i(t)\rangle$ is the $i$-th primary component of $M^A$. The isomorphism above arises from $\R[t]/\langle F(t)\rangle\cong \bigoplus\limits_{i=1}^r\R[t]/\langle F_i(t)\rangle$ (by the Chinese Remainder Theorem), noting that for $i\neq j$ the polynomials $F_i(t), F_j(t)$ are coprime (and hence $\mathrm{Hom}_{\R[t]}\left(\R/\langle F_i(t) \rangle, \R/\langle F_j(t) \rangle\right)$ is trivial; see \Cref{lem:triv-hom}). Since $M^A_{F_i}\cong M^{C_{F_i}}$, where $C^{F_i}$ is the companion matrix of the fundamental irreducible polynomial $F_i(t)$, the first part of the result follows.

The isomorphism $\mathrm{Hom}_{\mathscr R}(\mathscr{R}/\mathscr{I},\mathscr R/\mathscr I)\cong \mathscr R/\mathscr I$ for any commutative ring $\mathscr R$ with unity and an ideal $\mathscr I$, is well-known. By setting $\mathscr R = \R[t]$ and $\mathscr I_i=\langle F_i(t)\rangle\subseteq \R[t]$, 
 $$\Zen_{\M_n(\R)}\left(C_{F_i}\right)\cong \mathrm{End}_{\R[t]}(M^{C_{F_i}},M^{C_{F_i}})
 =\mathrm{Hom}_{\R[t]}( \R[t]/\mathscr I_i, \R[t]/\mathscr I_i)\cong \R[t]/\mathscr I_i.$$
By \Cref{lem:null-ideal-prin}, it follows that  
$\R[t]/\langle F_i(t)\rangle \cong \R[A].$
Hence $\Zen_{\GL_n(\R)}(A)\cong (\R[A])^{\times}$.
Since $|\theta^{-1}(I_{d_i}) | =|\mathfrak{m}|^{d_i}(q^{d_i}-1)$, where $I_{d_i}\in\GL_{d_i}(k)$,
\begin{align*}
|\Zen_{GL_{d_i}(\R)}(C_{F_i})|=|\mathfrak{m}|^{d_i}(q^{d_i}-1).    
\end{align*}
It then follows that,
$$ |\Zen_{\GL_n(\R)}(A)| =\prod\limits_{i=1}^r |\Zen_{\GL_{d_i}(\R)}(C_{F_i})| 
 =\prod\limits_{i=1}^r(|\mathfrak{m}|^{d_i}(q^{d_i}-1))
 =|\mathfrak{m}|^n\prod\limits_{i=1}^r(q^{d_i}-1). 
$$
\end{proof}

\subsection{Understanding the image of $L$-th power map}

First, we begin with a statement for the roots of a polynomial similar to Hensel's lemma. 
\begin{proposition}\label{prop:hens-root-full}
Let $\R$ be a finite local principal ideal ring of length two with maximal ideal $\mathfrak m= \langle \pi \rangle$ and residue field $k$ of characteristic $p$. 
Let $A\in \M_n(\R)$ be regular semisimple, and let $F(t)\in \R[t]$ be monic of degree $d$. Suppose there exists $\widetilde{B} \in \M_n(k)$ such that $
\overline{F}(\widetilde{B}) = \overline{A}$ and 
$\overline{F}'(\widetilde{B})\in \GL_n(k)$ where $F'(t)$ is the formal derivative of $F(t)$ in $\R[t]$. 
Then there exists $B\in \GL_n(\R)$ with $\overline{B}=\widetilde{B}$ and $F(B)=A$.
\end{proposition}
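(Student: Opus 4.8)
The plan is to solve the matrix equation $F(B)=A$ by a single Newton/Hensel step, exploiting that $\R$ has length two, so $\mathfrak m^2=0$ and in particular any product of two matrices with entries in $\mathfrak m$ lands in $\M_n(\mathfrak m^2)=\mathbf 0$. First I would pick any lift $B_0\in\M_n(\R)$ of $\widetilde B$, so that $\overline{F(B_0)}=\overline F(\overline{B_0})=\overline F(\widetilde B)=\overline A$; hence $E:=F(B_0)-A\in\M_n(\mathfrak m)$. I then seek the solution in the form $B=B_0+\Delta$ with $\Delta\in\M_n(\mathfrak m)$, which automatically preserves the reduction $\overline B=\widetilde B$. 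The decisive simplification is that every monomial in the expansion of $(B_0+\Delta)^j$ containing two or more factors of $\Delta$ vanishes, so the Taylor expansion terminates exactly:
\[
F(B_0+\Delta)=F(B_0)+D_{B_0}(\Delta),\qquad D_{B_0}(\Delta):=\sum_j c_j\sum_{i=0}^{j-1}B_0^{\,i}\,\Delta\,B_0^{\,j-1-i},
\]
where $F(t)=\sum_j c_j t^j$. Thus $F(B)=A$ is equivalent to the \emph{linear} equation $D_{B_0}(\Delta)=-E$ on $\M_n(\mathfrak m)$, and the whole statement reduces to showing that the $k$-linear operator $D_{B_0}$ is bijective on $\M_n(\mathfrak m)$.

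Next I would identify $\M_n(\mathfrak m)\cong\M_n(k)$ as $k$-vector spaces via $\overline X\mapsto \pi\overline X$ (well defined since $\pi\mathfrak m=0$, as $\mathfrak m$ is one-dimensional over $k$). Under this identification $D_{B_0}$ becomes the Fréchet derivative of $\overline F$ at $\widetilde B$, namely the operator $\mathcal D\colon Y\mapsto\sum_j\overline c_j\sum_{i=0}^{j-1}\widetilde B^{\,i}Y\widetilde B^{\,j-1-i}$ on $\M_n(k)$, because $B_0^{\,i}(\pi\overline X)B_0^{\,j-1-i}$ depends only on the reductions. Writing $L$ and $R$ for the (commuting) operators of left and right multiplication by $\widetilde B$, one has $\mathcal D=\sum_j\overline c_j\sum_{i=0}^{j-1}L^iR^{j-1-i}$, i.e. $\mathcal D$ is the divided difference $\bigl(\overline F(L)-\overline F(R)\bigr)/(L-R)$ evaluated at the commuting pair $(L,R)$. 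So invertibility of $D_{B_0}$ is exactly invertibility of $\mathcal D$.

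The heart of the argument — and the step I expect to be the main obstacle — is proving $\mathcal D$ invertible, for which I would first show $\widetilde B$ is itself regular semisimple. Since $\overline A=\overline F(\widetilde B)$ is regular semisimple, $\widetilde B$ commutes with $\overline A$ and hence lies in the centralizer $k[\overline A]$, an étale $k$-algebra, so $\widetilde B$ is semisimple. Diagonalizing $\widetilde B$ (simultaneously with $\overline A$) over $\overline k$ with eigenvalues $\lambda_1,\dots,\lambda_n$, the relation $\overline F(\lambda_a)=\mu_a$ (the eigenvalues of $\overline A$, which are distinct) forces the $\lambda_a$ to be pairwise distinct, so $\widetilde B$ is regular semisimple. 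In the eigenbasis the matrix units $E_{ab}$ are eigenvectors of $\mathcal D$:
\[
\mathcal D(E_{ab})=\overline F^{[1]}(\lambda_a,\lambda_b)\,E_{ab},\qquad
\overline F^{[1]}(\lambda_a,\lambda_b)=\begin{cases}\overline F'(\lambda_a)&a=b,\\[2pt]\dfrac{\mu_a-\mu_b}{\lambda_a-\lambda_b}&a\neq b.\end{cases}
\]
The diagonal eigenvalues $\overline F'(\lambda_a)$ are nonzero because $\overline F'(\widetilde B)\in\GL_n(k)$ by hypothesis, while the off-diagonal eigenvalues are nonzero because $\mu_a\neq\mu_b$ and $\lambda_a\neq\lambda_b$ for $a\neq b$. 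Hence $\mathcal D$ has no zero eigenvalue and is invertible, so $\Delta:=-D_{B_0}^{-1}(E)$ exists and $B=B_0+\Delta$ satisfies $F(B)=A$ with $\overline B=\widetilde B$.

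Finally, since $\det B\equiv\det\widetilde B\pmod{\mathfrak m}$, the matrix $B$ lies in $\GL_n(\R)$ precisely because $\widetilde B\in\GL_n(k)$; this is the one point where I lean on the ambient setting (for $F(t)=t^L$ in our applications, $\overline F'(\widetilde B)=L\,\widetilde B^{\,L-1}$ invertible already forces $\widetilde B\in\GL_n(k)$ since $\gcd(L,p)=1$). Everything else is routine once $\mathfrak m^2=0$ is used to kill the higher-order terms and $\widetilde B$ is shown to be regular semisimple; the only genuinely delicate ingredient is the derivative computation together with the eigenvalue bookkeeping for the Sylvester-type operator $\mathcal D$.
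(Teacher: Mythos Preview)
Your argument is correct and complete, but it takes a genuinely different route from the paper's. The paper exploits the hypothesis more sharply at the \emph{lifting} step: since $\widetilde B$ commutes with $\overline A$ and $\overline A$ is regular semisimple, one has $\widetilde B\in k[\overline A]$, and the paper lifts $\widetilde B$ \emph{inside the commutative subalgebra} $\R[A]$ by lifting the coefficients of the polynomial expression. The correction term $\pi D$ is then also taken in $\R[A]$, so $B_0$, $\pi D$ and $F'(B_0)$ all commute, the Fréchet derivative collapses to the ordinary one-variable Taylor term $\pi D\,F'(B_0)$, and one simply sets $D=-C\,F'(B_0)^{-1}$ where $F(B_0)=A+\pi C$. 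This avoids entirely the Sylvester-type operator $\mathcal D$, the passage to $\overline k$, the verification that $\widetilde B$ is regular semisimple, and the divided-difference eigenvalue bookkeeping. Your approach, by contrast, works for an \emph{arbitrary} lift $B_0$ of $\widetilde B$ and is the natural thing to do if one does not spot the commutative lift; the price is the eigenvalue analysis of $\mathcal D$, which you carry out correctly (the off-diagonal eigenvalues are nonzero because the $\mu_a$ are distinct, the diagonal ones because $\overline F'(\widetilde B)$ is invertible). Both proofs leave the invertibility of $B$ to the ambient context in the same way; you were right to flag this.
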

\begin{proof}
Given $\widetilde{B} \in\GL_n(k)$ satisfy $\overline{F}(\widetilde{B})=\overline{A}$, we have $\widetilde{B}\in\Zen_{\M_n(k)}(\overline{A})$.
Since $\overline{A}$ is regular semisimple, $\Zen_{\M_n(k)}(\overline{A}) = k[\overline{A}]$, thus
$\widetilde{B} = \sum\limits_{i=0}^ra_i \overline{A}^i$ for some $a_i$. Now denote $\sum\limits_{i=0}^r b_i A^i = B_0 \in\R[A] \subseteq\M_n(\R)$, a lift of $\widetilde{B}$, under the map $\theta \colon \R[A] \longrightarrow k[\overline{A}]$, where $\theta(b_i)=a_i$. It can be proved that $\ker(\theta) = \mathfrak{m}[A]$.

Since $\overline{F(B_0)}=\overline{F}(\widetilde{B}) = \overline{A}$, there exists $C\in\R[A]$ such that $F(B_0)=A+ \pi C$ (by the use of $\theta$ map).
Note that this step is possible only because $\pi^2=0$.
As $\overline{F'(B_0)} = \overline{F}'(\widetilde{B})$ is invertible, ${F'(B_0)}^{-1} \in \R[A]^\times \subseteq\GL_n(\R)$. Set $D = -CF'(B_0)^{-1}$. Then, by the use of Taylor series expansion (see \cite{ConradHensel}), one gets that
\begin{align*}
    F(B_0+\pi D)= F(B_0)+ \pi DF'(B_0) = F(B_0)-\pi C=A,
\end{align*}
Since $B_0,\pi D\in\Zen_{\M_n(\R)}(A) = \R[A]\subseteq \M_n(\R)$ and $\pi^2=0$.
Further we have $\overline{B_0+\pi D}=\overline{B_0}=\widetilde{B}$.
\end{proof}
\begin{corollary}\label{prop-A-regsem-irred}
With the notation in the above \Cref{prop:hens-root-full}, suppose $A\in \GL_n(\R)$ is a regular semisimple matrix with its characteristic polynomial fundamental irreducible. Then, $A\in \im(\Phi_L)$ if and only if $\overline{A}\in \im(\overline{\Phi}_L)$.
\end{corollary}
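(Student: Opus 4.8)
The plan is to deduce the statement directly from \Cref{prop:hens-root-full} by specializing the polynomial to $F(t) = t^L$. With this choice one has $F(B) = B^L = \Phi_L(B)$ and $\overline{F}(\widetilde{B}) = \widetilde{B}^L = \overline{\Phi}_L(\widetilde{B})$, so the polynomial equation $F(B) = A$ solved by the proposition is \emph{exactly} the assertion $A \in \im(\Phi_L)$. In other words, the entire content of the corollary is a translation of the abstract root-lifting result into the language of the power map.

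The forward implication is immediate and requires no lifting: if $A = \Phi_L(B) = B^L$ for some $B \in \GL_n(\R)$, then applying $\theta$ gives $\overline{A} = \overline{B}^{\,L} = \overline{\Phi}_L(\overline{B})$, so $\overline{A} \in \im(\overline{\Phi}_L)$. For the reverse implication, I would start from a $\widetilde{B} \in \GL_n(k)$ with $\overline{A} = \widetilde{B}^L$ (such a $\widetilde{B}$ is automatically invertible, being a preimage under $\overline{\Phi}_L$ of the invertible $\overline{A}$) and then verify the three hypotheses of \Cref{prop:hens-root-full}: that $A$ is regular semisimple, which is assumed; that $\overline{F}(\widetilde{B}) = \widetilde{B}^L = \overline{A}$, which is the value condition; and that the formal derivative is invertible at $\widetilde{B}$. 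Since $F'(t) = L\,t^{L-1}$, we get $\overline{F}'(\widetilde{B}) = L\,\widetilde{B}^{L-1}$, and $\widetilde{B}^{L-1} \in \GL_n(k)$ because $\widetilde{B}$ is invertible. Applying the proposition then produces $B \in \GL_n(\R)$ with $\overline{B} = \widetilde{B}$ and $B^L = F(B) = A$, giving $A \in \im(\Phi_L)$.

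The step I expect to be the crux, and the only place any genuine hypothesis is consumed, is the invertibility of $\overline{F}'(\widetilde{B})$: this forces $L$ to be a unit in $k$, which holds \emph{precisely} because $\gcd(L,p) = 1$. This is where the coprimality assumption of the whole section enters essentially, and it is what guarantees the Hensel-type lift goes through. I anticipate no further obstacle, since all the delicate work (the role of $\pi^2 = 0$, the reduction to $\R[A]$, and the Taylor expansion) is already encapsulated in \Cref{prop:hens-root-full}. I would also remark that the hypothesis that $\Char_{\R,A}(t)$ is fundamental irreducible is stronger than needed for this particular argument, which only uses that $A$ is regular semisimple; the irreducibility is relevant to the surrounding development (pinning down the minimal polynomial via \Cref{lem:min=char-regsem}) rather than to the lifting itself.
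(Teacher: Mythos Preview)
Your proposal is correct and follows essentially the same route as the paper: specialize $F(t)=t^L$ in \Cref{prop:hens-root-full}, note that $\overline{F}'(\widetilde{B})=L\widetilde{B}^{L-1}\in\GL_n(k)$ because $\gcd(L,p)=1$, and handle the trivial forward direction by applying $\theta$. Your observation that the fundamental-irreducibility hypothesis is not actually used in this step is also accurate; it enters only in the surrounding development.
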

\begin{proof}
Take the polynomial $F(t) = t^L$ in \Cref{prop:hens-root-full}. If $\widetilde{B}^L = \overline{A}$ for some $\widetilde{B} \in\GL_n(k)$, then $\overline{F}'(\widetilde{B}) = L\widetilde{B}^{L-1} \in \GL_n(k)$. Hence, by \Cref{prop:hens-root-full}, it follows that $\overline{A}\in\GL_n(k)$ implies that $A\in \GL_n(\R)$. For the other side, if $B\in\GL_n(\R)$ satisfy $B^L= A$, one has $\overline{B}^L=\overline{A}$.
\end{proof}
\begin{proposition}\label{prop:reg-sem-baby-case}
Let $\R$ be a finite principal ideal local ring of length two, with unique maximal ideal $\mathfrak m$, residue field $k$ of characteristic $p$ (an odd prime), and $L$ be a positive integer coprime to $p$. 
A regular semisimple element $A\in\GL_n(\R)$ with fundamental irreducible characteristic polynomial is an $L$-th power if and only if $\Char_{\R, A}(t)$ is an $L$-power polynomial.
\end{proposition}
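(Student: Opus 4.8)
The plan is to reduce the statement over $\R$ to the corresponding statement over the residue field $k$, where the criterion is already known, and then to reconcile the two notions of $L$-power polynomial (over $\R$ and over $k$). Throughout, write $F(t) = \Char_{\R,A}(t)$, which is fundamental irreducible by hypothesis, and $f(t) = \overline{F}(t) = \Char_{k,\overline{A}}(t)$, which is then irreducible of degree $n$ in $k[t]$.

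First I would invoke \Cref{prop-A-regsem-irred}, which already gives $A \in \im(\Phi_L)$ if and only if $\overline{A} \in \im(\overline{\Phi}_L)$. Since $\overline{A}$ is regular semisimple with irreducible characteristic polynomial $f(t)$, the field result of \cite{KunduSinghGL} applies: $\overline{A}$ is an $L$-th power in $\GL_n(k)$ if and only if $f(t)$ is an $L$-power polynomial over $k$, i.e.\ $f(t^L)$ has an irreducible factor of degree $n = \deg f$. It then remains to prove the purely polynomial equivalence: $F(t)$ is an $L$-power polynomial over $\R$ if and only if $f(t)$ is an $L$-power polynomial over $k$.

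The heart of the argument is this equivalence, and I would establish it through separability. Because $A \in \GL_n(\R)$, we have $\overline{A}\in\GL_n(k)$ and hence $f(0)\neq 0$, so every root of $f$ in $\overline{k}$ is nonzero. As $\gcd(L,p)=1$, for each such root $\beta$ the equation $t^L=\beta$ has $L$ distinct solutions in $\overline{k}$, and distinct roots of $f$ produce disjoint fibers; thus $f(t^L)$ has $Ln=\deg f(t^L)$ distinct roots and is separable. Now $F(t^L)$ is monic with $\overline{F(t^L)}=f(t^L)$ separable, so \Cref{lem:tech-1-thm-1} applies and yields a unique factorization of $F(t^L)$ into coprime monic fundamental irreducibles; moreover the proof of that lemma shows each fundamental irreducible factor $G_j$ satisfies $\deg G_j = \deg \overline{G}_j$, with the $\overline{G}_j$ running bijectively over the irreducible factors of $f(t^L)$. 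Consequently $F(t^L)$ has a fundamental irreducible factor of degree $n$ if and only if $f(t^L)$ has an irreducible factor of degree $n$, which is precisely the desired equivalence. Chaining the three equivalences completes the proof.

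The step I expect to demand the most care is the separability of $f(t^L)$, since this is exactly where both the hypothesis $\gcd(L,p)=1$ and the invertibility of $A$ (guaranteeing $f(0)\neq 0$) are indispensable. Without separability, \Cref{lem:tech-1-thm-1} is unavailable, and the clean degree-matching between the fundamental irreducible factors of $F(t^L)$ over $\R$ and the irreducible factors of $f(t^L)$ over $k$ could break down, so I would be careful to justify that the fibers of $t\mapsto t^L$ over the distinct nonzero roots of $f$ are genuinely disjoint and of full size $L$.
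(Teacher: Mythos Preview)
Your proposal is correct and follows essentially the same approach as the paper: both reduce to the residue field via \Cref{prop-A-regsem-irred}, use the separability of $f(t^L)$ (from $\gcd(L,p)=1$ and $f(0)\neq 0$), and then lift factorizations via Hensel. The only cosmetic differences are that you package the lifting step through \Cref{lem:tech-1-thm-1} and chain three equivalences, whereas the paper treats the two implications separately, applies Hensel's lemma version~1 to a single factor in the reverse direction, and in the forward direction bypasses the field $L$-power criterion by a direct companion-matrix argument showing $\overline{A}\in\im(\overline{\Phi}_L)$.
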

\begin{proof}
For simplicity let us denote $\Char_{\R, A}(t)= \Char(t)$, with $\overline{\Char}(t) = g(t)$. Suppose $\Char(t^L)$ has a degree $n$ monic irreducible factor, say $V(t)$. Then, $\Char(t^L) = V(t)W(t)$, with $\overline{V}(t) = v(t)$  and $\overline{W}(t) = w(t)$.
Then, $\Char_{k,\overline{A}}(t) = g(t)$ is irreducible in $k[t]$. Clearly, ${g}(t^L) = v(t)w(t)$. Let $C_{v}$ be the companion matrix associated with $v(t)$. Then $g(C_{v}^L)= 0$.  As $g(t)$ is irreducible in $k[t]$, so $C_{v}^L$ is similar to $\overline{A}$, which implies that the characteristic polynomial of $C_{v}$ is irreducible in $k[t]$. So, $v(t)$ is an irreducible factor of $g(t^L)$ of degree $n$. Hence $\overline{A} \in \im(\overline{\Phi}_L)$, therefore by \Cref{prop-A-regsem-irred}, we get $A\in \im(\Phi_L)$.

Conversely, let us assume $A\in \im(\Phi_L)$. Then we get $\overline{A}\in \im(\overline{\Phi}_L)$. 
Hence there exist a degree $n$ monic irreducible factor of ${g}(t^L)$, say ${v}(t)$; see \cite[Theorem 1]{Otero90}. Since $g(t)$ is separable and $\gcd(L, p) = 1$, the polynomial $g(t^L)$ is also separable. Write $g(t^L)= v(t)w(t)$ for some $w(t)\in k[t]$. Note that $v(t)$ and $w(t)$ are mutually coprime polynomials in $k[t]$.
Then, by Hensel's lemma version 1, \Cref{hensel-1}, there exist unique $V(t), W(t)$ in $\R[t]$ such that $\Char(t^L) = V(t) W(t)$, where the coefficients of $v(t)$ and $w(t)$ are reduction modulo $\mathfrak{m}$ to the coefficients of $V(t)$ and $W(t)$ respectively. $V(t)$ should be irreducible of degree $n$, because otherwise $v(t)$ will be reducible, which is a contradiction. Hence, $V(t)$ is a degree $n$ monic irreducible factor of $\Char(t^L)$ in $\R[t]$.
\end{proof}

Now we proceed to prove the general case when the characteristic polynomial of $\overline{A}\in \GL_n(k)$ admits a factorization into distinct irreducible monic polynomials in $k[t]$. We need the following technical lemma before we prove the main result.  
\begin{lemma}\label{lem:tech-2-thm-1}
Let $B= \mathrm{diag}(B_1, B_2, \ldots, B_r)\in \GL_n(\R)$ be a block diagonal matrix such that $F_i(t) = \Char_{\R, B_i}(t)$ are fundamental irreducible polynomials in $\R[t]$ satisfying $\gcd(F_i, F_j)=1$ for all $1\leq i\neq j\leq r$. Further, let $\overline{B} \in\GL_n(k)$ be a regular semisimple element with characteristic polynomial $f(t) = {f}_1(t)\cdots {f}_r(t)$ such that $\overline{F_i}= f_i$.
Then, $B\in \im(\Phi_L)$ if and only if $B_i\in \im(\Phi_L)$ for all $1\leq i\leq r$.
\end{lemma}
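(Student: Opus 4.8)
The plan is to treat the two implications separately, the reverse being immediate and the forward direction carrying all the content. For the reverse direction, if $B_i = C_i^L$ with $C_i \in \GL_{d_i}(\R)$ for each $i$ (where $d_i = \deg F_i$), then $C = \mathrm{diag}(C_1, \dots, C_r) \in \GL_n(\R)$ satisfies $C^L = \mathrm{diag}(C_1^L, \dots, C_r^L) = B$, so $B \in \im(\Phi_L)$.

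For the forward direction, suppose $C \in \GL_n(\R)$ satisfies $C^L = B$. Then $C$ commutes with $B = C^L$, so $C \in \Zen_{\M_n(\R)}(B)$, and the crux is to show that $C$ must respect the block decomposition of $B$. Write $M = \R^n = \bigoplus_{i=1}^r M_i$, where $M_i$ is the coordinate submodule on which $B$ acts as $B_i$; each $M_i$ is a $B$-invariant free $\R$-submodule, and as an $\R[t]$-module $M_i^{B_i} \cong \R[t]/\langle F_i\rangle$ is cyclic, since by \Cref{lem:min=char-regsem} the minimal polynomial of $B_i$ over $\R$ is its fundamental irreducible characteristic polynomial $F_i$ (this is precisely the primary decomposition underlying \Cref{lem:centra-reg-sem-R}). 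Under the isomorphism $\Zen_{\M_n(\R)}(B) \cong \mathrm{End}_{\R[t]}(M^B)$ established before \Cref{lem:triv-hom}, the element $C$ becomes an $\R[t]$-module endomorphism of $M$. For $i \neq j$, the component of $C$ mapping $M_i$ into $M_j$ is an $\R[t]$-homomorphism, i.e. an element of $\mathrm{Hom}_{\R[t]}(\R[t]/\langle F_i\rangle, \R[t]/\langle F_j\rangle)$, which vanishes by \Cref{lem:triv-hom} because $F_i$ and $F_j$ are coprime fundamental irreducibles. Hence $C(M_i) \subseteq M_i$ for every $i$, so $C = \mathrm{diag}(C_1, \dots, C_r)$ with $C_i \in \Zen_{\M_{d_i}(\R)}(B_i)$.

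To finish, since $\R$ is local the relation $\det C = \prod_i \det C_i \in \R^\times$ forces each $\det C_i \in \R^\times$, so $C_i \in \GL_{d_i}(\R)$; comparing corresponding blocks in $C^L = B$ then gives $C_i^L = B_i$, whence each $B_i \in \im(\Phi_L)$. The main obstacle is exactly the block-diagonalization step: a priori an $L$-th root of a block matrix need not be block diagonal, and it is the pairwise coprimality of the fundamental irreducible factors $F_i$ — manifested through the vanishing of the cross-homomorphisms in \Cref{lem:triv-hom}, equivalently the centralizer splitting of \Cref{lem:centra-reg-sem-R} — that rules out any off-diagonal contribution and lets us descend membership in the image of $\Phi_L$ to each individual block.
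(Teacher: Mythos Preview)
Your proof is correct, and the reverse direction matches the paper's. For the forward direction, however, you take a genuinely different route. The paper does not attempt to block-diagonalize an $L$-th root of $B$; instead it reduces modulo $\mathfrak m$ to obtain $\overline{B}\in\im(\overline{\Phi}_L)$, invokes the known field result \cite[Proposition~4.5]{KunduSinghGL} to conclude that each $f_i$ is an $L$-power polynomial (hence $\overline{B_i}\in\im(\overline{\Phi}_L)$), and then lifts back via \Cref{prop:reg-sem-baby-case}, which rests on the Hensel-type \Cref{prop:hens-root-full}. Your argument bypasses both the external field result and the Hensel lifting: you exploit the centralizer description of \Cref{lem:centra-reg-sem-R} together with \Cref{lem:triv-hom} to force any $L$-th root $C$ of $B$ to be block diagonal, and then read off $C_i^L=B_i$ blockwise. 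This is more self-contained and, notably, does not use the hypothesis $\gcd(L,p)=1$ at all, whereas the paper's route needs it through \Cref{prop:reg-sem-baby-case}. The paper's approach, on the other hand, is consistent with its global strategy of reducing to the residue field and lifting, and reuses machinery already in place.
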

\begin{proof}
Let $B_i\in \im(\Phi_L)$ be of size $n_i\times n_i$ where $n_i = \mathrm{deg}(F_i(t))$. So, there exists $D_i\in \GL_{n_i}(\R)$ such that $D_i^L = B_i$ for each $1\leq i\leq r$. Now, let us take 
$$ D = \begin{pmatrix}  
D_1 & & & \\& D_2 & & \\ & & \ddots & \\ & & & D_r
\end{pmatrix}.$$
Clearly $D^L = B$. So, $B\in \im(\Phi_L)$.

Conversely, let $B\in \im(\Phi_L)$, so $\overline{B} \in \im(\overline{\Phi}_L)$. Hence, each $f_i(t)$ is an $L$-power polynomial, by \cite[Proposition 4.5]{KunduSinghGL}. Thus, $\overline{B_i} \in \im(\overline{\Phi}_L)$ for all $i$. The result then follows from \Cref{prop:reg-sem-baby-case}.
\end{proof}
\noindent Now we prove \Cref{thm:reg-sem-poly}, the main result of this section.
\theoremone*
\begin{proof}
Let $A\in \GL_n(\R)$ with characteristic polynomial $\Char(t)\in \R[t]$ be such that $\overline{A} \in\GL_n(k)$ is a regular semisimple element with characteristic polynomial $\overline{\Char}(t) = h_1(t)h_2(t)\cdots h_r(t)$. 
We have a corresponding factorization by virtue of \Cref{lem:tech-1-thm-1}, $\Char(t) = H_1(t)H_2(t)\cdots H_r(t)$ such that all $H_i(t)$ are monic irreducible in $\R[t]$ and $\overline{H_i}(t) = h_i(t)$ for each $1\leq i\leq r$. 
Hence, 
$$\overline{A} \sim_k \begin{pmatrix}  
{C}_{h_1} & & & \\ & {C}_{h_2} & & \\ & & \ddots & \\ & & & {C}_{h_r} \end{pmatrix}$$
where $C_{h_i}$ is the companion matrix corresponding to $h_i(t)$. Now, let $M^A = \R[t]/\langle \Char(t)\rangle$ where $\Char(t)=H_1(t)H_2(t)\cdots H_r(t)$. Let $I_j=\langle H_j(t) \rangle$ in $\R[t]$. 
As $h_i(t)$ and $h_j(t)$ are coprime polynomials for $i\neq j$, using \Cref{lem:F-G-fund-irred}, $I_i$ and $I_j$ are comaximal ideals in $\R[t]$ for $i\neq j$. Now, by the Chinese Remainder Theorem 
$$\dfrac{\R[t]}{\langle F(t)\rangle } \cong \dfrac{\R[t]}{\langle H_1(t)\rangle}\bigoplus \dfrac{\R[t]}{\langle H_2(t)\rangle}\bigoplus \cdots \bigoplus \dfrac{\R[t]}{\langle H_r(t)\rangle}.$$ 
This further implies that 
$$A\sim_{\R} \begin{pmatrix}  
C_{H_1} &  & & \\ & C_{H_2} & & \\ & & \ddots & \\ &   & & C_{H_r} \end{pmatrix}$$
where $C_{H_i}$ is the companion matrix corresponding to $H_i(t)$ that satisfies ${\overline C}_{H_i}=C_{h_i}$ for $1\leq i\leq r$.

Now, by \Cref{lem:tech-2-thm-1}, $A\in \im(\Phi_L)$ if and only if each blocks $C_{H_i}\in \im(\Phi_L)$ for $1\leq i\leq r$. Then, by \Cref{prop-A-regsem-irred} it follows that  $A\in \im(\Phi_L)$ if and only if  $H_i(t^L)$ has a degree $d_i$ irreducible factor in $\R[t]$ where $d_i=\deg(H_i(t))$, for each $1\leq i\leq r$. This completes the proof.
\end{proof}

\section{Compatible cyclic matrices as $L$-th powers in $\GL_n(\R)$}\label{sec:cyc}

We start with a few definitions. 
A matrix $A\in \GL_n(\R)$ is said to be \emph{cyclic} if $\overline{A}=\theta(A)\in\GL_n(k)$ is cyclic matrix.
Recall that an element $\overline{A}\in\GL_n(k)$ is said to be \emph{cyclic} if $\Char_{k,A}(t)=\mathrm{Min}_{k,A}(t)$; equivalently there exists $v\in k^n$ such that $k^n=k[t]\cdot v$, where $t\cdot v=Av$.
We first have the following result:
\begin{lemma}\label{lem:cyclic-down-up}
    Let $\R$ be a finite local ring of length two, and $A\in\GL_n(\R)$ be cyclic. 
    Then the $\R$-module $\R^n$ is a cyclic $\R[t]$-module; $t\cdot v=A\cdot v$.
\end{lemma}
\begin{proof}
Let $\widetilde{v}$ be a cyclic vector for the $k[t]$-module $k^n$ via the action $t\cdot \widetilde{v}=A\cdot\widetilde{v}$.
Let $w\in\R^n$ be a lift of $\widetilde{v}$.
Consider the set $\left\{w,Aw,\cdots,A^{n-1}w\right\}\subseteq \R^n$.
Since $\theta(A^iw)=\overline{A}^i\widetilde{v}$ and $\{\widetilde{v},\overline{A}\widetilde{v},\cdots,\overline{A}^{n-1}\widetilde{v}\}$ is a basis of the $k$-vector space $k^n$, by \cite[Proposition 2.8]{AtiyahMacBook69}, one gets that $\R[t]\cdot w=\R^n$.
\end{proof}
Using \Cref{lem:null-ideal-prin}, one sees that for a cyclic element $A\in\GL_n(\R)$ the null ideal is principal and satisfies 
$\Char_{\R,A}(t)=\mathrm{Min}_{\R,A}(t)$.  
Unlike the field case, however, the characteristic polynomial may fail to admit a factorization of the form  
$
\Char_{\R,A}(t)=\prod_{i=1}^\ell F_i(t)^{r_i},
$
where the $F_i$ are fundamental irreducible and pairwise coprime.
This is illustrated in the following example.
\begin{example}
    Consider the matrix $A=\begin{pmatrix}
    0&-1\\1&-1
\end{pmatrix}\in\GL_2(\Z/9\Z)$ which has $\Char_{\Z/9\Z,A}(t)=t^2+t+1$ to be irreducible.
The matrix $\overline{A}\in\GL_2(\mathbb{F}_3)$ cyclic, with $\Char_{\mathbb{F}_3,\overline{A}}(t)=(t-1)^2$.
Note that the characteristic polynomial of $A\in\GL_2(\R)$ is irreducible, and hence $\Char_{\R,A}(t)\neq(F(t))^r$ for some fundamental irreducible $F(t)\in \R[t]$.
\end{example}
Since our goal is to count cyclic elements in $\GL_n(\R)$ and in $\GL_n(\R)\cap\im(\Phi_L)$, we introduce the following definition for a special subclass of cyclic elements.
Given a cyclic matrix $\widetilde{A}\in\GL_n(k)$ we call $A\in\GL_n(\R)$ to be \emph{cyclic matrix compatible with $\widetilde{A}$} if (a) the polynomial $\Char_{\R,A}(t)$ has a factorization of the form $\prod\limits_{i=1}^{\ell}F_i(t)^{r_i}$ with $\langle F_i,F_j\rangle=\R$, (b) the polynomials $F_i$ are fundamental irreducible and $\Char_{k,\overline{A}}(t)=\Char_{k,\widetilde{A}}(t)=\prod\limits_{i=1}^\ell\overline{F_i}(t)^{r_i}$.
We call an element $A\in\GL_n(\R)$ a \emph{compatible cyclic element}, if there exists a cyclic matrix $\widetilde{A}\in\GL_n(k)$ such that $A$ is a cyclic matrix compatible with $\widetilde{A}$.
\subsection{Jordan canonical form for compatible cyclic matrices}
We proceed to develop a result that is `\emph{analogous}', in a certain sense, to a canonical form for cyclic $k[t]$-modules. 
It is well known from the theory of modules over a principal ideal domain that a matrix $A \in \M_n(\mathbb F_q)$ can be written (up to conjugacy) as a block matrix with blocks of the form $$J_r(f)=\begin{pmatrix}
C_f &  &   &   &   \\
 I & C_f &  &   &   \\
  &  I & \ddots &  &   \\
  &   & \ddots  & C_f &  \\
  &   &   & I  & C_f \\
\end{pmatrix}_{rd\times rd},$$
where $d$ is the degree of $f$, an irreducible factor of the characteristic polynomial of $A$, $C_f$ is the companion matrix of $f$, and $r$ is a positive integer; up to rearrangement of blocks, this canonical form is unique. 
When $A$ has the minimal and characteristic polynomial both to be $(h(t))^r$ for some positive integer $r$, where $h(t)\in \mathbb{F}_q[t]$ is a monic irreducible polynomial, then $A\sim_{\mathbb{F}_q} J_r(h)$.
As a corollary we get that, when $A$ has the minimal and characteristic polynomial both as $(h(t))^r\in \mathbb{F}_q[t]$ fore some positive integer $r$, then $A\sim_{\mathbb{F}_q} J_r(h)$.
The construction of the basis for this Jordan form depends on an isomorphism of two special rings.
\begin{lemma}\cite[Theorem 2.3.7]{Singla2010}\label{lem:iso-ring-thesis}
    Let $h(t) \in \mathbb{F}_q[t]$ be an irreducible polynomial of degree $d$
and let $E$ denote the field $\mathbb{F}_q[t]/h(t)$. Then the rings $\mathbb{F}_q[t]/(h(t)^r)$ and $E[u]/(u^r)$ are isomorphic.
are isomorphic.
\end{lemma}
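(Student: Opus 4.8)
The plan is to realise $\mathscr A := \Fq[t]/(h(t)^r)$ as an Artinian local ring, to exhibit inside it a \emph{coefficient field} isomorphic to its residue field $E$ together with a nilpotent generator of its maximal ideal, and then to write down an explicit map out of $E[u]/(u^r)$ and verify it is bijective. Note first that $\mathscr A$ is local with maximal ideal $\mathfrak n = \langle \bar h\rangle$, where $\bar h$ denotes the image of $h(t)$, and residue field $\mathscr A/\mathfrak n \cong E$; moreover $\bar h^{\,r}=0$ while $\bar h^{\,r-1}\neq 0$ (by comparing degrees in $\Fq[t]$), so $\bar h$ is nilpotent of index exactly $r$, and each $\mathfrak n^i/\mathfrak n^{i+1}$ is a one-dimensional $E$-space spanned by the image of $\bar h^{\,i}$.

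The crucial first step — and the one place where genuine work is needed — is to embed $E$ into $\mathscr A$. There is no obvious copy of the residue field inside $\mathscr A$, and this is exactly what Hensel's lemma version 3, \Cref{hensel-3}, supplies: it produces $q_r(t)\in\Fq[t]$ with $q_r(t)\equiv t \pmod{h(t)}$ and $h(q_r(t))\equiv 0 \pmod{h(t)^r}$. Letting $\vartheta$ be the image of $q_r(t)$ in $\mathscr A$, one gets $h(\vartheta)=0$, so the evaluation map $\Fq[x]\to\mathscr A$, $x\mapsto\vartheta$, kills $h(x)$ and descends to a ring homomorphism $\iota\colon E=\Fq[x]/(h(x))\to\mathscr A$. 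Since $E$ is a field and $\iota(1)=1$, the map $\iota$ is injective; and because $q_r(t)\equiv t\pmod{h(t)}$, the composite $E\xrightarrow{\iota}\mathscr A\to\mathscr A/\mathfrak n=E$ is the identity, so $\iota(E)$ is a genuine coefficient field lifting the residue field.

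With the embedding in hand I would define $\Psi\colon E[u]/(u^r)\to\mathscr A$ as the $\Fq$-algebra homomorphism that restricts to $\iota$ on $E$ and sends $u\mapsto\bar h$; this is well-defined precisely because $\bar h^{\,r}=0$. To see $\Psi$ is an isomorphism, first compare $\Fq$-dimensions: $\dim_{\Fq}\mathscr A=\deg(h^r)=rd$ and $\dim_{\Fq}E[u]/(u^r)=r\dim_{\Fq}E=rd$, so it suffices to prove surjectivity. For this, set $S=\operatorname{im}\Psi$, so that $S\supseteq\iota(E)$ and $\bar h^{\,i}\in S$ for all $i$. Since $\iota(E)$ surjects onto $\mathscr A/\mathfrak n$ we have $S+\mathfrak n=\mathscr A$, and an induction shows $S+\mathfrak n^i=\mathscr A$ for all $i$: given $x\in\mathfrak n^i$, write its class in $\mathfrak n^i/\mathfrak n^{i+1}$ as $\lambda\cdot\overline{\bar h^{\,i}}$ with $\lambda\in\iota(E)\subseteq S$ (possible since this quotient is spanned by $\bar h^{\,i}$ over $E$), whence $x-\lambda\bar h^{\,i}\in\mathfrak n^{i+1}$ and $\lambda\bar h^{\,i}\in S$. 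Thus $\mathfrak n^i\subseteq S+\mathfrak n^{i+1}$, and combining with $S+\mathfrak n^i=\mathscr A$ gives $S+\mathfrak n^{i+1}=\mathscr A$. Iterating up to $\mathfrak n^r=0$ yields $S=\mathscr A$, so $\Psi$ is onto and therefore an isomorphism.

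The only real obstacle is the construction of the coefficient field $\iota(E)\subseteq\mathscr A$; everything after that is formal — a dimension count and a filtration argument. Conceptually this is the content of the Cohen structure theorem for complete equicharacteristic local rings, but the separability of $E/\Fq$ (automatic since finite fields are perfect) lets us bypass the general machinery and instead invoke the concrete Hensel lifting of \Cref{hensel-3} directly.
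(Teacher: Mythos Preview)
Your proof is correct and follows essentially the same route as the paper: both use the Hensel lift (the paper's \Cref{lem:hensel-4} is the same statement as the \Cref{hensel-3} you invoke) to embed $E$ into $\mathscr A=\Fq[t]/(h(t)^r)$ and then define the identical map $E[u]/(u^r)\to\mathscr A$ sending the residue-field generator to the image of the Hensel lift and $u\mapsto\bar h$. The only difference is in checking bijectivity: the paper observes surjectivity from the single relation $t=z_r(t)+\phi_1(t)h(t)$ and then argues the kernel is zero, while you run a filtration argument for surjectivity and a dimension count for injectivity --- your verification is arguably cleaner, but the underlying construction is the same.
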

The core part of the proof of \Cref{lem:iso-ring-thesis} depends on the following version of Hensel's lemma.
\begin{lemma}\label{lem:hensel-4}
 Let $h(t)$ be an irreducible polynomial over $\mathbb{F}_q[t]$.
Then, for each positive integer $r$, there exists $z_r(t) \in \mathbb{F}_q[t]$ such that $z_r(t) \equiv
t\pmod{h(t)}$, and $h(z_r(t)) \equiv 0 \pmod{h(t)^r}$.
\end{lemma}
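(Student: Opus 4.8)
The plan is to establish the result by Newton iteration (Hensel lifting) in the ring $\mathbb{F}_q[t]$ with respect to the ideal $\langle h(t)\rangle$, proceeding by induction on $r$. I note in passing that this is in fact the same statement as Hensel's lemma version 3, \Cref{hensel-3}, so one could alternatively just invoke that result; but the substance is the lifting argument, which I sketch below.

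First I would record the one structural fact that drives everything: since $\mathbb{F}_q$ is a perfect field and $h(t)$ is irreducible, $h(t)$ is separable, so $\gcd(h(t), h'(t)) = 1$ in $\mathbb{F}_q[t]$. Consequently $h'(t)$ is a unit modulo $h(t)$, and hence a unit modulo every power $h(t)^m$. This invertibility of the formal derivative is the crucial ingredient, and is precisely where separability (equivalently, perfectness of $\mathbb{F}_q$) enters; I expect it to be the main obstacle in the sense that the whole argument collapses without it.

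For the base case $r=1$, take $z_1(t) = t$, so that $z_1 \equiv t \pmod{h}$ and $h(z_1) = h(t) \equiv 0 \pmod{h}$ trivially. For the inductive step, suppose $z_r \in \mathbb{F}_q[t]$ has been constructed with $z_r \equiv t \pmod{h}$ and $h(z_r) \equiv 0 \pmod{h^r}$, and write $h(z_r) = a(t)\,h(t)^r$ for some $a(t) \in \mathbb{F}_q[t]$. I would seek a correction of the form $z_{r+1} = z_r + c(t)\,h(t)^r$. The formal Taylor expansion of the polynomial $h$ gives $h(z_r + c h^r) = h(z_r) + c\,h^r\,h'(z_r) + (\text{terms divisible by } h^{2r})$, and since $2r \ge r+1$ the remainder vanishes modulo $h^{r+1}$. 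Thus $h(z_{r+1}) \equiv h^r\bigl(a + c\,h'(z_r)\bigr) \pmod{h^{r+1}}$, so it suffices to choose $c$ with $a + c\,h'(z_r) \equiv 0 \pmod{h}$. Because $z_r \equiv t \pmod{h}$ we have $h'(z_r) \equiv h'(t) \pmod{h}$, which is a unit mod $h$ by the first step, so $c \equiv -a\,h'(t)^{-1} \pmod{h}$ solves the congruence; any lift $c(t) \in \mathbb{F}_q[t]$ then works.

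Finally I would verify that $z_{r+1} = z_r + c h^r \equiv z_r \equiv t \pmod{h}$ and $h(z_{r+1}) \equiv 0 \pmod{h^{r+1}}$, completing the induction. The only genuine subtleties are the two-fold use of separability — to invert $h'(t)$ mod $h$ and to ensure $h'(z_r)$ stays invertible at each stage — together with the bookkeeping that the quadratic Taylor remainder is divisible by $h^{2r}$ and hence negligible modulo $h^{r+1}$.
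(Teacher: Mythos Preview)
Your argument is correct: the Newton--Hensel iteration you outline is exactly the standard proof, and every step (separability of $h$ giving invertibility of $h'(t)$ mod $h$, the Taylor expansion with quadratic remainder in $h^{2r}$, solving for the correction $c$) is sound. The paper does not supply its own proof of this lemma; it is stated as a known ingredient (identical in content to \Cref{hensel-3}, cited from \cite[Lemma~2.3.8]{Singla2010}), so there is no alternative approach to compare against --- your proof simply fills in what the paper takes for granted.
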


We now turn to the construction of a basis that yields the Jordan form we aim to establish.
Define the map $\delta: \mathbb{F}_q[u,v]\longrightarrow \mathbb{F}_q[t],~\text{by }~u\mapsto h(t), ~v\mapsto z_r(t)$. 
Let $I=\langle h(v),u^r\rangle$ and $J=\langle h(t)^r\rangle.$ Then $\delta(I)=J$. 
By \Cref{lem:hensel-4}, we can write $t = z_r(t) + \phi_1(t) h(t)$.
Since both $z_r(t)$ and $h(t)$ lie in the image of the map $\delta$, it follows that $t$ is also in the image, and therefore $\delta$ is surjective.
Now $\delta$ is a ring homomorphism, because for any $g(u,v)$ in $\mathbb{F}_q[u,v]$ we have $\delta(g(u,v))=g(\delta(u),\delta(v))$.
The map $\delta$ induces a surjection $\widetilde{\delta}:\mathbb{F}_q[u,v]/\langle h(v),u^r\rangle\longrightarrow \mathbb{F}_q[t]/\langle h(t)^r\rangle$ defined by $\bar{v}\mapsto \overline{z_r(t)},\bar{u}\mapsto \overline{h(t)}$.
Note that in the left-hand side ring $\Bar{v}$ means $v+I$ and in the right-hand side $\overline{z_r(t)}$ means $z_r(t)+J$.
Moreover $\widetilde{\delta}(\overline{g(u,v)})=\overline{g(\delta(u),\delta(v))}$ i.e. $\widetilde{\delta}(g(u,v)+I)=g(\delta(u),\delta(v))+J$.
Now, 
$\delta(g(u,v))+J=0+J$ implies $g(\delta(u),\delta(v))\in J$.
Next, $g(\delta(u),\delta(v))\in J$ implies that $g(h(t),z_r(t))\in J$. Therefore, by \Cref{lem:hensel-4}, there exists a function $\widetilde{\phi}_2\in \mathbb{F}_q[t]$ such that $g(h(t),z_r(t))=\widetilde{\phi}_2(t)\cdot h(t)^r$.
Now through the definition of the surjection $\delta$, there exists a $\phi_2(u,v)\in \mathbb{F}_q[u,v]$ such that $\phi_2(\delta(u),\delta(v))=\widetilde{\phi}_2(t)$. Hence we get $g(u,v)=\phi_2(u,v)u^r$. Therefore $g(u,v)\in \langle h(y),x^r\rangle=I$. Hence $g(\delta(u),\delta(v))\in J$ implies $g(u,v)\in I$, whence we have
$\ker (\widetilde{\delta})=\{g(u,v)+I\in \mathbb{F}_q[u,v]/\langle h(v),u^r\rangle \mid \delta(g(u,v))+J=0+J\}=0+I$.
Consequently $\widetilde{\delta}$ is an isomorphism from $\mathbb{F}_q[u,v]/\langle h(v),u^r\rangle$ to $ \mathbb{F}_q[t]/\langle h(t)^r\rangle$.

For a fixed $r$, define $\vartheta(t)=t-z_r(t)$ in $\mathbb{F}_q[t]$. 
Then by \Cref{lem:hensel-4} it follows that $\vartheta(t)\in \langle h(t)\rangle$. 
Moreover $\vartheta(t)\notin \langle h(t)^2\rangle$, see \cite[p. 17]{Singla2010}.
Hence $\vartheta(t)=\alpha(t)h(t)$ for some $\alpha(t)\in \mathbb{F}_q[t]$ such that $\overline{\alpha(t)}\in (\mathbb{F}_q[t]/\langle h(t)^r\rangle)^{\times} $.
Indeed, if it is not a unit in $\mathbb{F}_q[t]/\langle h(t)^r\rangle$, the element $\vartheta(t)$ must belong to $\langle h(t)^2\rangle$, which is a contradiction.

Also, $\vartheta(t)=t-z_r(t)$ implies $t=z_r(t)+\alpha(t)h(t)$. 
Therefore $\Bar{t}= \overline{z_r(t)}+\overline{\alpha(t)h(t)}$.
From here onward, we will denote $\alpha(t)$ as $\alpha$ in $\mathbb{F}_q[t]$.
Take the map $\mathbb{F}_q[t]/\langle h(t)^r\rangle\xrightarrow[]{\tilde{\delta}^{-1}} {\mathbb{F}_q[u,v]/\langle h(v),u^r\rangle}$
defined by $\bar{t}\mapsto \bar{\alpha'}\bar{u}+\bar{v}$, where $\bar{\alpha'}={\tilde{\delta}}^{-1}(\bar{\alpha})$ (here $\alpha'=\alpha'(u,v) \in \mathbb{F}_q[u,v]$).
Note that $\bar{\alpha'}$ must be a unit in $\mathbb{F}_q[u,v]/\langle h(v),u^r\rangle$.
The ring $\mathbb{F}_q[u,v]/\langle h(v),u^r\rangle$ is an $\mathbb{F}_q$ vector space with respect to the basis:
\begin{align*}
   \{1, \bar{v},\cdots,\bar{v}^{d-1},
  (\bar{\alpha'}\bar{u}),(\bar{\alpha'}\bar{u})\bar{v},\cdots,(\bar{\alpha'}\bar{u})\bar{v}^{d-1}, 
  \cdots,
  (\bar{\alpha'}\bar{u})^{r-1}, (\bar{\alpha'}\bar{u})^{r-1}\bar{v},\cdots,(\bar{\alpha'}\bar{u})^{r-1}\bar{v}^{d-1}\}
\end{align*}
Since the action of $A$ translates to an action by $\bar{t}$ with respect to the above basis, the matrix of multiplication by $\bar{\alpha'}\bar{u}+\bar{v}$ can be obtained from the association given as follows;
\begin{align*}
  &1\mapsto \bar{v} + \bar{\alpha'}\bar{u}\\
  &\bar{v}\mapsto \bar{v}^2 + (\bar{\alpha'}\bar{u})\bar{v}\\
  &\cdots\\
  &\bar{v}^{d-1}\mapsto \bar{v}^d+ (\bar{\alpha'}\bar{u})\bar{v}^{d-1}=-a_0-a_1\bar{v}-\cdots-a_{d-1}\bar{v}^{d-1}+ (\bar{\alpha'}\bar{u})\bar{v}^{d-1}
\end{align*}
which is $J_r(h)$; where $h(t)=a_0+a_1t+\cdots+a_{d-1}t^{d-1}+t^d \in \mathbb{F}_q[t]$.
The vector space $\mathbb{F}_q[t]/\langle h(t)^r\rangle$ can also be seen as an $\mathbb{F}_q$ vector space of the same dimension $rd$.
Given that $\tilde{\delta}$ is both an isomorphism and $\mathbb{F}_q$-linear, it sends a basis to a basis when the two rings are regarded as $\mathbb{F}_q$-vector spaces of equal dimension.
Hence applying $\tilde{\delta}$ on the above basis we obtain another basis 
\begin{align*}
\{&1, \overline{z_r(t)},\cdots,\overline{z_r(t)}^{d-1},\\&(\bar{\alpha}\overline{h(t)}),(\bar{\alpha}\overline{h(t)})\overline{z_r(t)},\cdots,(\bar{\alpha}\overline{h(t)})\overline{z_r(t)}^{d-1},\\
&\cdots,\\
&(\bar{\alpha}\overline{h(t)})^{r-1}, (\bar{\alpha}\overline{h(t)})^{r-1}\overline{z_r(t)},\cdots,(\bar{\alpha}\overline{h(t)})^{r-1}\overline{z_r(t)}^{d-1}\}    
\end{align*}
With respect to this basis, the matrix of multiplication by $\tilde{t}(=\overline{z_r(t)}+\overline{\alpha(t)h(t)})$ is $J_r(h)$, because \Cref{lem:hensel-4} gives $h(z_r(t))\equiv 0\pmod{h(t)^r}$ which implies $(z_r(t))^d\equiv-a_0-a_1z_r(t)-\cdots-a_{d-1}z_r(t)^{d-1} \pmod{h(t)^r}$.
We will denote this basis (last one) by $\mathscr{B}_{\mathbb{F}_q}$. We have the following result. 
\begin{proposition}\label{prop-canonical-form-type}
Let $\R$ be a finite local principal ideal ring of length two, with unique maximal ideal $\mathfrak{m}$ and the residue field $k\cong\Fq$ of odd characteristic. 
Let $A\in \GL_n(\R)$ be a compatible cyclic matrix with $\Char_{\R,A}(t)=(F(t))^r$, such that $\theta(A)=\overline{A}\in \GL_n(k)$ is a cyclic matrix with $\Char_{k,\overline{A}}(t)=(\overline{F}(t))^r$, where $F(t)\in\R[t]$ is a monic fundamental irreducible polynomial. Then $$ A\sim_{\R} J_{\R,F}(r)=\begin{pmatrix}
C_F &  &   &      \\
 I & C_F &  &      \\
  &   & \ddots   &   \\
  &   &   & C_F  \\
\end{pmatrix},$$ where $C_F$ is the companion matrix corresponding to the polynomial $F(t)\in \R[t]$, and $I$ is the identity matrix of size $\deg F\times \deg F$.
\end{proposition}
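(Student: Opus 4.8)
The plan is to reduce the statement to a claim about a single cyclic module and then mirror, over $\R$, the explicit basis construction carried out just above for $\mathbb{F}_q[t]/\langle h(t)^r\rangle$. Since $A$ is cyclic, \Cref{lem:cyclic-down-up} shows $\R^n$ is a cyclic $\R[t]$-module, and \Cref{lem:null-ideal-prin} gives $\Char_{\R,A}(t)=\mathrm{Min}_{\R,A}(t)=F(t)^r$ as the generator of the null ideal; hence $M^A\cong \R[t]/\langle F(t)^r\rangle$ as $\R[t]$-modules, with $A$ acting as multiplication by $t$. Thus it suffices to exhibit an $\R$-basis of $\R[t]/\langle F(t)^r\rangle$ in which multiplication by $t$ has matrix $J_{\R,F}(r)$. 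I record at the outset that $\R[t]/\langle F(t)^r\rangle$ is a finite local ring with maximal ideal $\langle \pi, F(t)\rangle$ and residue field $\mathbb{F}_{q^d}=k[t]/\langle\overline F\rangle$, where $d=\deg F$: indeed both $\pi$ and $F(t)$ are nilpotent, and the quotient by $\langle\pi,F\rangle$ is the field $\mathbb{F}_{q^d}$, forcing locality.

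The first genuinely new step is a Hensel lemma over $\R$ replacing \Cref{lem:hensel-4}: for each $r$ there is $Z_r(t)\in\R[t]$ with $Z_r(t)\equiv t \pmod{F(t)}$ and $F(Z_r(t))\equiv 0 \pmod{F(t)^r}$. I would produce $Z_r$ by Newton iteration in the complete local ring $\R[t]/\langle F(t)^r\rangle$ starting from $t$, the step $Z\mapsto Z-F(Z)F'(Z)^{-1}$ being legitimate because $F'(Z)$ is a unit there: its image in the residue field $\mathbb{F}_{q^d}$ is $\overline F{}'(\bar t)\neq 0$, since $\overline F$ is irreducible over the perfect field $k$, hence separable. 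With $Z_r$ in hand I would copy the construction preceding the proposition: define $\delta\colon \R[u,v]\to\R[t]$ by $u\mapsto F(t),\,v\mapsto Z_r(t)$, which is a surjective $\R$-algebra map (as $t=Z_r(t)+\phi(t)F(t)$ lies in its image), descend it to $\widetilde\delta\colon \R[u,v]/\langle F(v),u^r\rangle\to\R[t]/\langle F(t)^r\rangle$, and note both rings are free $\R$-modules of rank $rd$, hence of equal finite cardinality $|\R|^{rd}$; a surjection of finite sets of equal size is a bijection, so $\widetilde\delta$ is a ring isomorphism. Since the source is $\mathscr{E}[u]/\langle u^r\rangle$ with $\mathscr{E}=\R[t]/\langle F(t)\rangle$, this is the $\R$-analogue of \Cref{lem:iso-ring-thesis}, and it sidesteps the kernel computation used in the field case.

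I would then transport a basis exactly as before. Writing $\vartheta(t)=t-Z_r(t)$, the relation $Z_r\equiv t\pmod F$ gives $\vartheta=\alpha(t)F(t)$ for some $\alpha\in\R[t]$, and $\alpha$ is a unit in $\R[t]/\langle F(t)^r\rangle$: reducing modulo $\mathfrak m$, the polynomial $\overline{Z}_r$ satisfies the same hypotheses as $z_r$ in the field construction, so $\overline\vartheta\in\langle\overline F\rangle\setminus\langle\overline F{}^2\rangle$, whence the image of $\alpha$ in $\mathbb{F}_{q^d}$ is nonzero, which is exactly unit-ness in the local ring $\R[t]/\langle F(t)^r\rangle$. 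Pulling back the $\R$-basis $\{(\bar\alpha'\bar u)^i\bar v^{\,j}\}_{0\le i<r,\,0\le j<d}$ of $\mathscr{E}[u]/\langle u^r\rangle$ through $\widetilde\delta$ yields an $\R$-basis of $\R[t]/\langle F(t)^r\rangle$; since $t=Z_r(t)+\alpha(t)F(t)$, multiplication by $t$ acts within each block through the companion matrix $C_F$ (using $F(Z_r)\equiv 0\pmod{F^r}$, i.e. $Z_r^{\,d}\equiv -\sum_{j<d}a_jZ_r^{\,j}$) and shifts to the next block via the factor $\alpha F$, producing the subdiagonal identity blocks $I$. This is precisely $J_{\R,F}(r)$, so $A\sim_\R J_{\R,F}(r)$.

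The main obstacle is the absence of a clean valuation on $\R[t]/\langle F(t)^r\rangle$: unlike $\mathbb{F}_q[t]/\langle h^r\rangle$, its maximal ideal $\langle\pi,F\rangle$ is not principal because of the nilpotent $\pi$, so the field-case arguments that $F'(Z)$, $\alpha$, and $\bar\alpha'$ are units cannot be read off from an $F$-adic order. My remedy throughout is to test unit-ness in the genuine residue field $\mathbb{F}_{q^d}$ of this local ring — equivalently, to reduce modulo $\mathfrak m$ and invoke the already-established field statements — together with the finiteness argument upgrading surjectivity of $\widetilde\delta$ to an isomorphism. The only remaining point, that the transported set is genuinely an $\R$-basis, follows from a unitriangular change of basis in the $u$-adic filtration (legitimate since $\bar\alpha'$ is a unit) and I would merely indicate it.
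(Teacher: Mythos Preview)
Your proposal is correct and follows the same strategy as the paper: produce $Z_r\in\R[t]$ with $Z_r\equiv t\pmod{F}$ and $F(Z_r)\equiv 0\pmod{F^r}$, then build an explicit $\R$-basis of $\R[t]/\langle F(t)^r\rangle$ in which multiplication by $t$ has matrix $J_{\R,F}(r)$. The only differences are technical---you obtain $Z_r$ by Newton iteration in the local ring $\R[t]/\langle F^r\rangle$ and certify $\widetilde\delta$ is an isomorphism by a cardinality count, whereas the paper isolates the Hensel step as a separate lifting lemma (\Cref{lem-Hensel-type-5}) that lifts a chosen $z_r\in\mathbb F_q[t]$ and then invokes Nakayama's lemma (\cite[Proposition~2.8]{AtiyahMacBook69}) to show the lifted set $\mathscr{B}_\R$ generates---but the architecture is identical.
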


To prove this result, we need a Hensel-type lifting lemma per \Cref{lem:hensel-4}.
\begin{lemma}\label{lem-Hensel-type-5}
Let $\R$ be a finite local principal ideal ring of length two with a unique maximal ideal $\mathfrak{m}=\langle\pi\rangle$, and the residue field $k=\mathbb F_q$ of odd characteristic.
Let $F(t)\in\R[t]$ be a monic fundamental irreducible polynomial and $f(t)=\overline{F}(t)$.
Then, for every integer $r > 0$, letting $I_r = \langle F(t)^r \rangle \subseteq \R[t]$ and $\overline{I}_r = \langle f(t)^r \rangle \subseteq \mathbb{F}_q[t]$, the following holds: for every polynomial $z(t)$ in $\mathbb{F}_q[t]$ satisfying $z(t)\equiv t\pmod{f(t)}$ and $f(z(t))\in \overline{I}_r$, there exists a lift $Z(t)\in \mathscr{O}_2[t]$ of $z(t)$, such that $Z(t)\equiv t\pmod{F(t)}$ and $F(Z(t))\in I_r$.
\end{lemma}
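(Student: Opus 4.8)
The plan is to obtain $Z(t)$ by correcting an initial lift through a single Hensel step, which is \emph{exact} here because $\pi^2=0$ in $\R$. First I would fix a convenient initial lift: writing $z(t)=t+f(t)a(t)$ (possible since $z\equiv t\pmod{f}$) and lifting $a$ to some $A(t)\in\R[t]$, I set $Z_0(t)=t+F(t)A(t)$, so that $\overline{Z_0}=z$ and $Z_0\equiv t\pmod{F}$. I will then search for the desired $Z$ in the form $Z=Z_0+\pi F W$ for a suitable $W(t)\in\R[t]$. Any such $Z$ automatically satisfies $\overline Z=z$ (since $\pi\in\mathfrak m$) and $Z\equiv t\pmod F$ (since $\pi F W\in\langle F\rangle$), so only the condition $F(Z)\in I_r$ remains to be arranged.

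Next I would record two elementary facts forced by the length-two hypothesis: $\mathfrak m=\langle\pi\rangle$ with $\pi^2=0$ and $\mathrm{Ann}_\R(\pi)=\mathfrak m$, from which a short computation yields the divisibility dictionary
\[
\pi P(t)\in\langle F(t)^s\rangle \iff \overline P(t)\in\langle f(t)^s\rangle \qquad (s\ge 1).
\]
Since $\overline{F(Z_0)}=f(\overline{Z_0})=f(z)\in\langle f^r\rangle$ by hypothesis, I can write $F(Z_0)=C F^r+\pi E$ with $C,E\in\R[t]$ (lifting $f(z)=f^r\bar c$), where $\overline E$ is well defined modulo $\langle f^r\rangle$. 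Using the algebraic Taylor expansion $F(a+h)=F(a)+hF'(a)+h^2 G(a,h)$ with integer binomial coefficients (cf.\ \cite{ConradHensel}) and $h=\pi F W$, the relation $\pi^2=0$ kills the quadratic tail, so
\[
F(Z)=F(Z_0)+\pi F W\,F'(Z_0)=C F^r+\pi\bigl(E+F W\,F'(Z_0)\bigr).
\]
By the dictionary, $F(Z)\in\langle F^r\rangle$ is then equivalent to $\overline E+f\,f'(z)\,\overline W\in\langle f^r\rangle$ in $\mathbb{F}_q[t]$, so the whole problem reduces to solving the single linear congruence
\[
\overline E+f(t)\,f'(z(t))\,\overline W\equiv 0\pmod{f(t)^r}
\]
for $\overline W\in\mathbb{F}_q[t]$, after which I lift $\overline W$ to any $W\in\R[t]$.

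Finally I would verify solvability. As $f$ is irreducible over the perfect field $\mathbb{F}_q$ it is separable, so $\gcd(f,f')=1$; combined with $z\equiv t\pmod f$ this gives $f'(z)\equiv f'(t)\not\equiv 0\pmod f$, whence $f'(z)$ is a unit modulo $f^r$. Thus the congruence is solvable precisely when $f\mid\overline E$. This last divisibility is where the compatibility of the initial lift is used: since $Z_0\equiv t\pmod F$ we have $F(Z_0)\equiv F(t)\equiv 0\pmod F$ while $CF^r\equiv 0\pmod F$, so $\pi E\in\langle F\rangle$, and the dictionary with $s=1$ gives $\overline E\in\langle f\rangle$. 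Therefore $\overline W$ exists, and the corresponding $Z=Z_0+\pi F W$ meets all three requirements.

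The main obstacle is the solvability of this linearized congruence, and specifically choosing the right shape for the correction term: one must correct by a multiple of $F$ (namely $\pi F W$) in order to preserve $Z\equiv t\pmod F$, and then show that the resulting defect $\overline E$ is divisible by $f$ so that the unit $f'(z)$ can absorb it. Both points rest essentially on the length-two hypothesis (through $\pi^2=0$ and $\mathrm{Ann}_\R(\pi)=\mathfrak m$) and on the separability of $f$; were $f$ inseparable, $f'(z)$ could vanish modulo $f$ and the argument would collapse.
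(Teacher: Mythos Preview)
Your argument is correct and gives a genuinely different route from the paper's. The paper proves the lemma by induction on $r$: having produced a lift $V$ with $F(V)\in I_{r-1}$, it performs a Newton--Hensel correction $\widetilde V=V+F^{r-1}W_0$ with $W_0\equiv -G\,F'(V)^{-1}\pmod F$ (where $F(V)=GF^{r-1}$), checks that $F(\widetilde V)\in I_r$, and then subtracts a multiple of $F^r$ to restore the condition $\overline{\widetilde V}=z$. Nowhere does that argument essentially use $\pi^2=0$; it is the classical inductive Hensel scheme carried out in $\R[t]$.

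By contrast, you exploit the length-two hypothesis from the outset: the single correction $Z=Z_0+\pi F W$ is \emph{exact} because $\pi^2=0$ kills the quadratic Taylor tail, which collapses the whole problem to one linear congruence over $\mathbb{F}_q$. Your ``divisibility dictionary'' $\pi P\in\langle F^s\rangle\iff\overline P\in\langle f^s\rangle$ (valid since $\mathrm{Ann}_\R(\pi)=\mathfrak m$) is the clean tool that translates the $\R[t]$-condition $F(Z)\in I_r$ into the $\mathbb{F}_q[t]$-congruence, and the key divisibility $f\mid\overline E$ follows exactly as you say from $F(Z_0)\equiv 0\pmod F$. This is shorter and more conceptual for length two; the paper's inductive argument, on the other hand, is the one that would adapt without change to $\mathscr O_\ell$ for $\ell>2$, where a single $\pi$-correction no longer suffices.
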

We postpone the proof of \Cref{lem-Hensel-type-5} until after completing the proof of \Cref{prop-canonical-form-type}.
\begin{proof}[Proof of \Cref{prop-canonical-form-type}]
Consider the basis $\mathscr{B}_{\mathbb{F}_q}$, as above. 
Recall that $\vartheta(t)=\alpha h(t)$, and hence $z_r(t)-t+\alpha h(t)=0$. 
The following diagram
$$\begin{tikzcd}
	{\R[t]} && {k[t]} \\
	{\R[t]/\langle F(t)^r\rangle} && {k[t]/\langle f(t)^r\rangle}
	\arrow["{{\theta}}", from=1-1, to=1-3]
	\arrow["{\Gamma}", from=1-1, to=2-1]
	\arrow["{\gamma}", from=1-3, to=2-3]
	\arrow["{{\theta}^*}", from=2-1, to=2-3]
\end{tikzcd}$$
where the maps $\theta$, $\theta^*$, $\Gamma$ and $\gamma$ are the natural surjections, is commutative.

Consider $Z_r(t)\in \R[t]$, a lift of $z_r(t)\in k[t]$, such that $Z_r(t)\equiv t\pmod{F(t)}$ and $F(Z_r(t))\equiv 0\pmod{F(t)^r}$. 
The choice $Z_r(t)$ exists by \Cref{lem-Hensel-type-5}.
Then $Z_r(t)=t-\beta F(t)$ for some $\beta\in\R[t]$.
Since $Z_r(t)$ and $z_r(t)$ represent the same element in $\R[t]/\mathfrak{m}[t] = \mathbb{F}_q[t]$, which is a principal ideal domain, it follows that $\beta$ and $\alpha$ are equal as elements of $\R[t]/\mathfrak{m}[t]$. Consequently, $\beta \in \R[t]$ is a lift of $\alpha \in \mathbb{F}_q[t]$ via the map $\theta$.
    
Now, $\Gamma(t)=\Gamma(Z_r(t))+\Gamma(\beta F(t))$. 
The ring $\R[t]/\langle F(t)^r\rangle$ is a free $\R$ module. 
Consider $\langle\mathscr{B}_{\R}\rangle$, the $\R$-submodule of $\R[t]/\langle F(t)^r\rangle$ generated by $\mathscr{B}_{\R}$, where $\mathscr{B}_{\R}$ is the set 
\begin{align*}
  &\{1, \Gamma(Z_r(t)),\cdots,\Gamma(Z_r(t)^{d-1}),\\  
  &\Gamma(\beta F(t)),\Gamma(\beta F(t))\Gamma({Z_r}(t)),\cdots,\Gamma(\beta F(t))\Gamma({Z_r}(t)^{d-1}),\\
  &\cdots,\\
  &\Gamma((\beta{F(t)})^{r-1}), \Gamma((\beta{F(t)})^{r-1})\Gamma(({Z_r}(t)),\cdots,\Gamma((\beta{F(t)})^{r-1})\Gamma({Z_r}(t)^{d-1})\}.
\end{align*}    
Observe that $\theta^*(\Gamma(Z_r(t)))=\gamma(z_r(t))$; and $\theta^*(\Gamma(\beta F(t)))=\gamma(f(t))$. 
This further shows that $\theta^*(\mathscr{B}_{\R})=\mathscr{B}_{\Fq}$, adhering to the notations above.
As $\langle\mathscr{B}_{\Fq}\rangle\cong \Fq^n={\R}/{\mathfrak{m}}\otimes \R^n$, by Nakayama's Lemma \cite[Proposition 2.8]{AtiyahMacBook69}, we get $\langle\mathscr{B}_{\R}\rangle \cong \R^n$. 
Hence ${B}_{\R}$ is a minimal generating set for $\R[t]/\langle F(t)^r\rangle$ as a free $\R$ module.
As $F(\Gamma(Z_r(t)))=0$ in $\R[t]/\langle F(t)^r\rangle$, by \Cref{lem-Hensel-type-5}, with respect to this minimal generating set $\langle\mathscr{B}_{\R}\rangle$; the matrix of multiplication by $\Gamma(t)=\Gamma(Z_r(t)+\beta F(t))$ gives the form of matrix, as mentioned.
\end{proof}

\begin{proof}[Proof of \Cref{lem-Hensel-type-5}]
Consider the canonical surjection $\theta:\R[t]\longrightarrow \mathbb{F}_q[t]$, kernel of which is $\mathfrak{m}[t]$.
As $f(t)$ is irreducible in $\Fq[t]$, it must be separable. 
Therefore $\gcd(f(t),f'(t))=1$, whence there exist $a(t),b(t)\in\mathbb{F}_q[t]$ such that $a(t)f(t)+b(t)f'(t)=1$.
Let $A(t)$ and $B(t)$ be two arbitrary lifts of $a(t)$ and $b(t)$ respectively in $\R[t]$. 
As $A(t)F(t)+B(t)F'(t)\equiv a(t)f(t)+b(t)f'(t)\pmod{ \mathfrak{m}[t]}$, we can write $A(t)F(t)+B(t)F'(t)=1+\mathscr{M}(t)$ for some $\mathscr{M}(t)\in \mathfrak{m}[t]$.
As $\R$ is a local ring of length two, $1+\mathscr{M}(t)\in\R[t]$ is invertible. 
Taking $\widetilde{A}(t)=(1+\mathscr{M}(t))^{-1}A(t)$ and $\tilde{B}(t)=(1+\mathscr{M}(t))^{-1}B(t)$, one gets $\tilde{A}(t)f(t)+\tilde{B}(t)f'(t)=1$.
This gives $\tilde{B}(t)F'(t)\equiv1\pmod{F(t)}$,
i.e. $F'(t)$ is a unit in $\R[t]/I_1$.
Similarly, $f'(t)$ is a unit in $\Fq[t]/\overline{I}_1$.
Let us assume $F'(t)^{-1}\equiv D(t)\pmod{F(t)}$ and $f'(t)^{-1}\equiv d(t)\pmod{f(t)}$.
We will prove our result by induction on $r$.

\ul{We first deal with the case $r=1$}.
As $z(t)\equiv t\pmod{f(t)}$, we have $z(t)=t+f(t)c(t)$, for some $c(t)\in\mathbb{F}_q[t]$.
Take an arbitrary lift $C(t)\in\R[t]$ of $c(t)$, and define $Z(t)=t+F(t)C(t)$.
Then $Z(t)\equiv t\pmod{F(t)}$ and $Z(t)\equiv z(t)\pmod{\mathfrak{m}[t]}$.
Given a polynomial $H(t)=\sum_{i=0}^{d}{a_i}t^i\in\R[t]$, and two variables $u,v$, we can write 
$$ H(u+v)= a_0+\sum_{i=1}^{d}{(a_i((u^i+iu^{i-1}v)+\Gamma_i(u,v)v^2))},$$
where $\Gamma_i(u,v)\in\R[u,v]$ for all $i$.
Rearranging we have, $H(u+v)= H(u)+ H'(u)v+\Gamma(u,v)v^2$ where $\Gamma(u,v)=\sum_{i=1}^{d}{a_i\Gamma_i(u,v)}\in\R[u,v]$ and $H'(t)$ is the formal derivative of $H(t)$ (also see \cite[Example 2.2]{ConradHensel}). 
Since $Z(t)=t+F(t)C(t)$ and hence $F(Z(t))=F(t+F(t)C(t))=F(t)+F(t)C(t)F'(t)+(F(t)C(t))^2\Gamma(t,F(t)C(t))$, we get $F(Z(t))\equiv 0\pmod{F(t)}$. The statement is true for $r=1$. {This finishes the proof for the case $r=1$}.
Next, assume the statement is true for $r=2,3,\cdots,j-1$.

\ul{We now prove it for the case $r=j$.} 
As $f(t)^j|f(z(t))$ therefore $f(t)^{j-1}|f(z(t))$. 
Therefore $z(t)\equiv t\pmod{f(t)}$ and $f(z(t))\in \overline{I}_{j-1}$.
By the induction hypothesis, there exists a lift $V(t)\in \R[t]$ of $z(t)$ such that $V(t)\equiv t\pmod{F(t)}$ and $F(V(t))\in I_{j-1}$.
Therefore there exists $G(t)\in \R[t]$ such that $G(t)F(t)^{j-1}=F(V(t))$. 
Applying $\theta$, we get $g(t)f(t)^{j-1}=f(z(t))$.

We claim that $V(t)\equiv t\pmod{F(t)}$ implies that $F'(V(t))$ is a unit.
As $\tilde{A}(t)F(t)+\tilde{B}(t)F'(t)=1$ , replacing $t$ by $V(t)$, we have $\tilde{A}(V(t))F(V(t))+\tilde{B}(V(t))F'(V(t))=1$. 
As $V(t)\equiv t\pmod{F(t)}$, $F(V(t))\equiv F(t)\equiv 0\pmod{F(t)}$, whence 
$\tilde{B}(V(t))F'(V(t))\equiv1\pmod{F(t)}$. Therefore $F'(V(t))$ is a unit in $\R[t]/F(t)$.
Thus, we have proved our claim.

Writing $W_0(t)\equiv -G(t)(F'(V(t))^{-1}\pmod{F(t)}$, we get that there exists $W_0(t)\in \mathscr{O}_2[t]$ such that $G(t)+F'(V(t))W_0(t)\equiv 0\pmod{F(t)}.$
Applying $\theta$ and taking into consideration $f(t)^j|f(z(t))$ implies $f(t)|g(t)$ for $j>1$, we get $w_0(t)\equiv 0\pmod{f(t)}$. 
Therefore, we can write $W_0(t)=F(t)P(t)+N(t)$, where $P(t)\in \R[t]$ and $N(t)\in \mathfrak{m}[t]$.

Define $\widetilde{V}(t)=V(t)+F(t)^{j-1}W_0(t)=V(t)+F(t)^jP(t)+F(t)^{j-1}N(t)$. For $j\geq 2$, $\widetilde{V}(t)\equiv V(t)\equiv t\pmod{F(t)}$. 
Clearly $F(\widetilde{V}(t))=F(V(t)+F(t)^{j-1}W_0(t))\equiv F(V(t))+F(t)^{j-1}W_0(t)F'(V(t))\pmod{F(t)^j}$.
Replacing the value of $F(V(t))=G(t)F(t)^{j-1}$ here we get $$F(\widetilde{V}(t))\equiv F(t)^{j-1}[G(t)+F'(V(t))W_0(t)]\pmod{F(t)^j}.$$
Now by the construction $G(t)+F'(V(t))W_0(t)\equiv 0\pmod{F(t)},$ which further implies $F(\widetilde{V}(t))\equiv 0\pmod{F(t)^j}.$
We are almost done, but at this stage, it is not true that $\widetilde{V}(t)\equiv z(t)$, so we need to \emph{perturb} accordingly to reach the desired polynomial.

Applying $\theta$ on the polynomial $\widetilde{V}(t)$ we get, $$\widetilde{v}(t)=z(t)+f(t)^{j-1}w_0(t).$$ 
Therefore we have $\widetilde{v}(t)=z(t)+e(t)$ where $e(t)=f(t)^{j-1}w_0(t)$ is divisible by $f(t)^j$, since $w_0(t)\equiv0\pmod{f(t)}.$
Write $e(t)=f(t)^jw(t)\in\mathbb{F}_q[t]$. Take an arbitrary lift $W(t)\in\R[t]$ of $w(t)$ and, write $E(t)=F(t)^j W(t)$.
Now $\widetilde{V}(t)-E(t)=\widetilde{V}(t)+F(t)^j(-W(t)).$
Again, an application of $\theta$ on the polynomial $\widetilde{V}(t)-E(t)$ produces $\tilde{v}(t)-e(t)=z(t)+e(t)-e(t)=z(t)$. Let us define 
$$ Z(t)=\widetilde{V}(t)-E(t)=\widetilde{V}(t)+F(t)^j(-W(t)).$$
Therefore $Z(t)$ is a lift of $z(t)$ in $\mathscr{O}_2[t]$. Moreover, $Z(t)\equiv \widetilde{V}(t)\equiv t\pmod{F(t)}$ and $F(Z(t))\equiv F(\widetilde{V}(t))\equiv 0\pmod{F(t)^j}$. 
\ul{Hence, the induction step is complete}
\end{proof}
We emphasize that factorization in $\R[t]$ is not unique; therefore, in \Cref{prop-canonical-form-type}, no uniqueness is claimed for the choice of the polynomial $F$.
However, $F$ is a fundamental irreducible monic polynomial.
Thus, if $F(t)^r$ admits a factorization $F_1F_2\cdots F_r$ with each $F_i$ a fundamental irreducible monic polynomial, then each $F_i$ must necessarily be a lift of the same irreducible polynomial $\overline{F}(t)$ over $\Fq$. 
From \Cref{lem:F-G-fund-irred}, it is known that two polynomials in $\R[t]$ are relatively coprime if and only if their images under the canonical projection $\theta$ are coprime in $\Fq[t]$.
Hence, the factors $F_i$ cannot be coprime in $\R[t]$.
This implies that any alternative factorization of $F(t)^r$ in $\R[t]$ has no bearing on the $\R[t]$-module structure of $\R[t]/\langle F(t)^r\rangle$.
In particular, the module structure arising from any alternative factorization of $F(t)^r$ must coincide with the $\R[t]$-module structure on $\R[t]/\langle F(t)^r\rangle$ as discussed above. Consequently, any matrix obtained by choosing a different generating set will be conjugate to the form described in \Cref{lem-Hensel-type-5}.
\subsection{Candidacy  of a compatible cyclic element in the image of $L$-th power map}
We begin with the case of a compatible cyclic element whose characteristic polynomial has the form $F(t)^r$, where $F(t)\in\R[t]$ is a monic fundamental irreducible polynomial and $r>0$ is an integer.
\begin{proposition}\label{prop:cyclic-single-poly}
Let $\R$ be a finite local principal ideal ring of length two, with $\mathfrak{m}$ being its unique maximal ideal, and let $k$ be the residue field of characteristic $p$, an odd prime.  
Given an integer $L>0$ coprime to $p$, a compatible cyclic element $A\in \GL_n(\R)$ with $\Char_{\R,A}(t)=F(t)^r$ with $F(t)\in\R[t]$ being monic fundamental irreducible, is an $L$-th power if and only if $F(t)$ is an $L$-power polynomial. 
\end{proposition}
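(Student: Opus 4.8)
The plan is to use the canonical form of \Cref{prop-canonical-form-type} to reduce to the single block $A=J_{\R,F}(r)$ (legitimate since $\im(\Phi_L)$ is conjugation-invariant), and then to split the equivalence into one field-theoretic statement over $k$ and two Hensel-type transfer statements between $\R$ and $k$. Concretely, I would establish the chain
\[
A\in\im(\Phi_L)\iff \overline{A}\in\im(\overline\Phi_L)\iff \overline{F}(t)\text{ is an }L\text{-power polynomial}\iff F(t)\text{ is an }L\text{-power polynomial},
\]
where, after reducing modulo $\mathfrak m$, we have $\overline{A}\sim_k J_r(f)$ with $f=\overline F$ irreducible of degree $d$ and $n=rd$. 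In this chain the middle equivalence is the field case and the third is purely about polynomial factorisation; the only genuinely new ingredient is the first equivalence, which is where the length-two structure of $\R$ is used.

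For the third equivalence I would argue exactly as in \Cref{prop:reg-sem-baby-case}. The implication \emph{$F$ is $L$-power $\Rightarrow$ $f$ is $L$-power} is immediate: a degree-$d$ fundamental irreducible factor $G(t)$ of $F(t^L)$ reduces modulo $\mathfrak m$ to a degree-$d$ irreducible factor of $f(t^L)$. Conversely, since $f$ is irreducible over the finite field $k$ and $\gcd(L,p)=1$, the polynomial $f(t^L)$ is separable; writing $f(t^L)=g(t)h(t)$ with $g$ irreducible of degree $d$ and $\gcd(g,h)=1$, \Cref{hensel-1} lifts this to $F(t^L)=G(t)H(t)$ with $G$ monic of degree $d$ and $\overline G=g$, and \Cref{lem:f-irred-F-irred} guarantees that $G$ is fundamental irreducible, so $F$ is $L$-power.

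For the middle equivalence I would treat the field case directly (it may alternatively be quoted from \cite{KunduSinghGL}). Any $\widetilde B\in\GL_n(k)$ with $\widetilde B^{\,L}=\overline A$ commutes with $\overline A$, hence lies in $\Zen_{\M_n(k)}(\overline A)=k[\overline A]\cong k[t]/\langle f(t)^r\rangle$ because $\overline A$ is cyclic. Under this isomorphism $\overline A$ corresponds to the class of $t$, so $\overline A\in\im(\overline\Phi_L)$ iff the class of $t$ admits an $L$-th root in the local ring $k[t]/\langle f(t)^r\rangle$. Since $1+\langle f\rangle$ is a $p$-group, on which the $L$-power map is bijective as $\gcd(L,p)=1$, this reduces to $t\bmod f$ being an $L$-th power in $\mathbb{F}_{q^d}^\times$; and by a Galois argument this last condition is exactly the statement that $f(t^L)$ has a degree-$d$ irreducible factor, i.e. that $f$ is an $L$-power polynomial.

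The main work, and the step I expect to be the principal obstacle, is the first equivalence. The direction $A\in\im(\Phi_L)\Rightarrow\overline A\in\im(\overline\Phi_L)$ is trivial by applying $\theta$, so the content is the lift $\overline A\in\im(\overline\Phi_L)\Rightarrow A\in\im(\Phi_L)$. I would run the Newton/Hensel argument of \Cref{prop:hens-root-full} with the polynomial $t^L$, the delicate point being that $A$ is no longer regular semisimple. The argument nonetheless goes through because $\overline A$ cyclic still gives $\Zen_{\M_n(k)}(\overline A)=k[\overline A]$, and because $\R^n$ is a cyclic $\R[t]$-module (\Cref{lem:cyclic-down-up}) with principal null ideal (\Cref{lem:null-ideal-prin}), so that $\Zen_{\M_n(\R)}(A)=\R[A]\cong\R[t]/\langle F(t)^r\rangle$ is a \emph{local} ring whose units are precisely the elements with nonzero image in $\mathbb{F}_{q^d}$. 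Given $\widetilde B\in\GL_n(k)$ with $\widetilde B^{\,L}=\overline A$, I lift it to a unit $B_0\in\R[A]$; then $L B_0^{L-1}$ is a unit of $\R[A]$, since $L$ is invertible in $\R$ and $B_0$ is a unit, so writing $B_0^{L}=A+\pi C$ with $C\in\R[A]$ the single Newton correction $B=B_0-\pi C\,(L B_0^{L-1})^{-1}$ satisfies $B^L=A$ using $\pi^2=0$. The subtlety to verify carefully is that the entire computation, including the truncated Taylor expansion, takes place inside the commutative local ring $\R[A]$, which now contains nilpotents; this is precisely where the reduction to the single primary block $J_{\R,F}(r)$ and to the cyclic centralizer $\R[A]$ is essential.
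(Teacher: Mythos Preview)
Your argument is correct, but it differs substantially from the paper's proof of the converse direction. The paper does \emph{not} lift an $L$-th root via Hensel/Newton in $\R[A]$. Instead, given that $F$ is an $L$-power polynomial, it first invokes \Cref{thm:reg-sem-poly} to obtain $D\in\GL_d(\R)$ with $D^L=C_F$, and then writes down an explicit candidate root
\[
B=\begin{pmatrix}D&&&\\ I&D&&\\ &\ddots&\ddots&\\ &&I&D\end{pmatrix},
\]
computes $B^L$ as a lower block-triangular matrix with $C_F$ on the diagonal, and shows that $B^L$ is itself compatible cyclic with characteristic polynomial $F(t)^r$, so that $B^L\sim_\R J_{\R,F}(r)\sim_\R A$ by a second application of \Cref{prop-canonical-form-type}. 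The heart of that argument is checking, over $k$, that $\overline{B^L}$ has minimal polynomial $f(t)^r$; this is done via an $S+N$ decomposition and the observation that $f'(C_f)$ is invertible.

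Your route is cleaner and more conceptual: you work entirely inside the commutative local algebra $\R[A]\cong\R[t]/\langle F(t)^r\rangle$, observe that its unit group surjects onto $\mathbb{F}_{q^d}^\times$ with $p$-group kernel, and perform a single Newton step using $\pi^2=0$. This bypasses the explicit block computation and the second invocation of the canonical form, and in fact does not really use \Cref{prop-canonical-form-type} at all---only that $N_A=\langle F(t)^r\rangle$ is principal, so that $\R[A]$ is local. The paper itself acknowledges this alternative in the remark closing \Cref{sec:counting}, noting that for any cyclic $A$ one has $\Zen_{\M_n(\R)}(A)=\R[A]$ and hence $A\in\im(\Phi_L)\iff\overline A\in\im(\overline\Phi_L)$ by a ``verbatim replica'' of \Cref{prop:hens-root-full}. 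What the paper's approach buys is an explicit $L$-th root in block form, consistent with the constructive flavour of \Cref{sec:cyc}; what yours buys is a shorter and more transparent argument that would extend immediately to arbitrary cyclic $A$, not just compatible cyclic ones.
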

\begin{proof}
Let $\theta(F)=f$, and, $A\in \im(\Phi_L)$. 
By \Cref{prop-canonical-form-type}, 
$$A\sim_{\R} \begin{pmatrix}
C_F&  &   &      \\
I & C_F &  &      \\
  &   & \ddots   &   \\
  &   &   & C_F  \\
\end{pmatrix}\in\im(\Phi_L)\subseteq \GL_n(\R).$$
Applying $\theta$ we get that, $\overline{A}\in\GL_n(k)$, and by \cite[Proposition 4.2]{KunduSinghGL} $C_f\in \im(\overline{\Phi}_L)\subseteq \GL_n(k)$; whence $F$ is an $L$-power polynomial, by \Cref{thm:reg-sem-poly}.

Conversely, let $F$ be an $L$-power polynomial.
Therefore by \Cref{thm:reg-sem-poly}, $C_F\in \im(\Phi_L)$. Let $D\in \GL_d(\R)$ such that $D^L=C_F$.
Consider the matrix $B\in \GL_n(\R)$ defined as 
$$B= \begin{pmatrix}
D &  &   &      \\
 I & D &  &      \\
     & \ddots &  &   \\
     &   &   & D \\
\end{pmatrix}, $$
where $D$ appears $r$ many times as diagonal blocks. 
We claim that $B^L\sim_{\R}A$, which will prove the theorem when $\Char_{\R}(A)(t)=F(t)^r$.
Note that $$B^L=\begin{pmatrix} 
C_F &        &        &        &        \\ 
\binom{L}{1}D^{L-1} & C_F &        &        &        \\ 
\binom{L}{2}D^{L-2} & \binom{L}{1}D^{L-1} & \ddots &        &        \\ 
\vdots & \vdots & \ddots & C_F &        \\ 
\binom{L}{r-1}D^{L-r+1} & \binom{L}{r-2}D^{L-r+2} & \cdots & \binom{L}{1}D^{L-1} & C_F, 
\end{pmatrix}
$$ since $D^L=C_F$.
The matrix $B^L$ has characteristic polynomial $(F(t))^r\in \R[t]$ such that $\overline{B}^L\in \GL_n(k)$ has characteristic (and minimal) polynomial $f(t)^r\in k[t]$. 
We claim that the minimal polynomial of $B^L$ is also $F(t)^r$. 
It is enough to show that the minimal polynomial of $\overline{B}^L$ is $f(t)^r$.
 
\ul{The claim can be proved as follows}.
Consider the decomposition $\overline{B}^L= S+N$ where $$S=\mathrm{diag}(C_f,C_f,\cdots,C_f),\text{ and }N=\begin{pmatrix} 
\mathbf 0 &        &        &        &        \\ 
\binom{L}{1}\overline{D}^{L-1} & \mathbf 0 &        &        &        \\ 
\binom{L}{2}\overline{D}^{L-2} & \binom{L}{1}\overline{D}^{L-1} & \mathbf 0 &        &        \\ 
\vdots & \vdots & \ddots & \ddots &        \\ 
\binom{L}{r-1}\overline{D}^{L-r+1} & \binom{L}{r-2}\overline{D}^{L-r+2} & \cdots & \binom{L}{1}\overline{D}^{L-1} & \mathbf 0 
\end{pmatrix}$$
Note that the matrices $S$ and $N$ commute with each other as $\overline{D}C_f=C_f\overline{D}$. 
For the matrix $N^{r-1}$, its $(r,1)$-th block entry is $(LP^{L-1})^{r-1}$ and all other block entries are $\mathbf0_{d\times d}$.
Also $(L\overline{D}^{L-1})^{r-1}$ is invertible.
Therefore $N^{r-1}\neq \mathbf 0_{n\times n}$ and this implies the nilpotency index of $N$ is $r$. 
As $f(C_f)=\mathbf0_{d\times d}$, $f(S)=\mathbf 0_{n\times n}$.
As $f(t)$ is irreducible over a perfect field, it is separable, so $\gcd(f(t),f'(t))=1$. 
Hence there exists $a(t),b(t)$ in $\Fq[t]$ such that $a(t)f(t)+b(t)f'(t)=1$. 
Plugging $t=C_f$ instead of $t$ in the equation, one obtains $b(C_f)f'(C_f)=I$, and hence $f'(C_f)$ is invertible.
We conclude that the $(r,1)$-th block entry of $f(\overline{B}^L)^{r-1}$ is a non-zero block entry.
Therefore, $f(\overline{B}^L)^{r-1}$ is a nonzero matrix. 
As $\Char_{\Fq,{\overline{B}^L}}(t)=f(t)^r$ therefore this implies $f(\overline{B}^L)^r=\mathbf 0_{n\times n}$, and also, the $\mathrm{Min}_{\Fq}(\overline{B}^L)(t)=f(t)^r$; consequently the corresponding $\mathbb{F}_q[t]$ module $\mathbb{F}_q[t]/(h(t)^r)$ is cyclic. Hence the claim.
Therefore by \Cref{prop-canonical-form-type}, one obtains $$B^L\sim_{\R} \begin{pmatrix}
C_F &  &   &      \\
 I & C_F &  &      \\
  &   & \ddots &     \\
  &   &   & C_F 
\end{pmatrix}\sim_{\R} A.$$
Hence $A\in \im(\Phi_L)$.
\end{proof}
With all the necessary machinery in place, we are ready to prove \Cref{thm:cyc} in this section.
\theoremtwo*
\begin{proof}
To prove the theorem in generality, let $A\in \GL_n(\mathcal{O}_2)$ be a non-simple cyclic matrix with characteristic polynomial $F(t)=\prod\limits_{i=1}^\ell F_i(t)^{r_i}$ such that $\theta(A)= \overline{A}$ is a non-simple cyclic element in $\GL_n(k)$ with characteristic polynomial $f(t)=\prod\limits_{i=1}^\ell f_i(t)^{r_i}$; where $F_i$ are monic irreducible, relatively divisor-less polynomials in $\R[t]$ whose corresponding reductions $f_i$ in $k[t]$ are monic irreducible, mutually coprime polynomials.
    
First, let $A\in \im(\Phi_L)$. 
Then $\overline{A}\in \im(\overline{\Phi}_L)$. 
Consider the $\R[t]$ module $M^A$ associated to $A$, i.e. $\R[t]/\langle F(t)\rangle$.
By the Chinese Remainder Theorem, we can write $$M^A\cong\dfrac{\R[t]}{\langle F(t)\rangle}\cong \bigoplus\limits_{i=1}^\ell \dfrac{\R[t]}{\langle F_i(t)^{r_i}\rangle}.$$ 
Therefore $$A\sim_{\R} \begin{pmatrix}  
A_1 &        &        & \\
& A_2    &        & \\
&        & \ddots & \\
&        &        & A_\ell
\end{pmatrix}$$
where each $A_i$ is non simple cyclic with corresponding characteristic polynomial $F_i(t)$ such that corresponding reduction matrix $\overline{A}_i$ are cyclic with characteristic polynomial $f_i(t)$ for each $1\leq i\leq\ell$.
Moreover $M^{\overline{A}}\cong \dfrac{k[t]}{\langle f(t)\rangle}\cong \bigoplus\limits_{i=1}^\ell \dfrac{k[t]}{\langle f_i(t)^{r_i}\rangle}.$ 
Now $\overline{A}\in \im(\overline{\Phi}_L)$ implies each $\overline{A}_i\in \im(\overline{\Phi}_L)$, by \cite[Theorem 1]{Otero90}.
Following the first part of the proof of the case $\ell=1$, we get that each $F_i(t)$ is an $L$-power polynomial in $\R[t]$.

Conversely, assume that for $1\leq i\leq \ell$, each $F_i(t)$ is an $L$-power polynomial in $\R[t]$. 
Therefore by \Cref{prop:cyclic-single-poly}, each $A_i\in \im(\Phi
_L)$.
Therefore there exists $B_i\in \GL_{r_id_i}(\R)$ such that $B_i^L=A_i$ for $1\leq i\leq\ell$. 
Set matrix $$B= \begin{pmatrix}  
    B_1 &        &        & \\
        & B_2    &        & \\
        &        & \ddots & \\
        &        &        & B_\ell
\end{pmatrix}\in\GL_n(\R).$$ Then it satisfies $B^L= \begin{pmatrix}  
    A_1 &        &        & \\
        & A_2    &        & \\
        &        & \ddots & \\
        &        &        & A_\ell
\end{pmatrix}$; whence $A\in \im(\Phi_L)$.
\end{proof}

\section{Probability generating functions for several classes of elements in $\GL_n(\R)$}\label{sec:counting}
This section is dedicated to the proof of \Cref{thm:gen-fun} and \Cref{thm:gen-fun-cyc}. 
We start by considering the lifts of elements $\overline{A} \in \GL_n(k) \cap \im(\overline{\Phi}_L)$, focusing particularly on those whose characteristic polynomial is irreducible, or which belong to either the regular semisimple or cyclic conjugacy classes.
We show that, in the above cases, if there exists a lift $A \in \GL_n(\R)$ of $\overline{A}$ such that $A \in \im(\Phi_L)$, then every lift of $\overline{A}$ lies in $\im(\Phi_L) \subseteq \GL_n(\R)$.
However, this property does not hold in general, as we demonstrate in the following example.
\begin{example}\label{exm:down-not-up}
Consider the ring $\R=\Z/9\Z$ and 
$
A=\begin{pmatrix}5&0\\0&2\end{pmatrix}\in\GL_2(\R), L=2.
$
We have 
\begin{align*}
\overline{\Phi}_2\!\left(\begin{pmatrix}0&1\\2&0\end{pmatrix}\right)=\overline{A}\in\GL_2(k),
\end{align*}
and
\begin{align*}
\overline{\Phi}_2^{-1}(\overline{A})
=\left\{
\begin{pmatrix}0&2\\1&0\end{pmatrix},
\begin{pmatrix}0&1\\2&0\end{pmatrix},
\begin{pmatrix}1&1\\1&2\end{pmatrix},
\begin{pmatrix}1&2\\2&2\end{pmatrix}
\right\}.
\end{align*}
If $B=\begin{pmatrix}\alpha&\beta\\\gamma&\delta\end{pmatrix}\in\GL_2(\R)$ satisfies $B^2=A$, then using $BA=AB$ one gets
\begin{align*}
\begin{pmatrix}5\alpha&2\beta\\5\gamma&2\delta\end{pmatrix}
=\begin{pmatrix}5\alpha&5\beta\\2\gamma&2\delta\end{pmatrix},
\end{align*}
which gives $3\beta=3\gamma=0$, so $\beta,\gamma\in\{0,3,6\}$.  
This contradicts $\overline{\beta},\overline{\gamma}\in\{1,2\}$.  
Hence, there is no $B\in\GL_2(\R)$ such that $B^2=A$.
\end{example}

First consider $\overline{A}\in \GL_n(k)$ such that $f(t)=\Char_{k}(\overline{A})(t)\in k[t]$ is an irreducible polynomial of degree $n$. 
If monic $F(t)\in \R[t]$ is such that $\theta(F)=f$, $F(t)$ must be irreducible in $R[t]$, by \Cref{lem:f-irred-F-irred}. 
Let $F(t)=t^n+a_{n-1}t^{n-1}+\cdots+a_1t+a_0$ and $f(t)= t^n+b_{n-1}t^{n-1}+\cdots+b_1t+b_0$ where $\theta({a_i})=b_i$ for $0\leq i\leq n-1$ and $b_0\neq 0$.
The number of choices for  each $a_i$ is $|\mathfrak{m}|$ (indeed for each fixed $b_i$, corresponding $a_i$ can be written as $b_i+m$ where $m\in \mathfrak{m}$). Thus, the total number of such irreducible monic polynomials $F(t)$ is $|\mathfrak{m}|^n$ and they are distinct.
Let these $|\mathfrak{m}|^n$ distinct polynomials in $\R[t]$ be $F_1(t),F_2(t),\cdots,F_{|\mathfrak{m}|^n}(t)$ and associate $A_1,A_2,\cdots,A_{|\mathfrak{m}|^n}\in\GL_n(\R)$ to each of the respective polynomials where $A_i=C_{F_i}$.
Then each of the $\R$-similarity classes $[A_1]_{\R},[A_2]_{\R},\cdots,[A_{|\mathfrak{m}|^n}]_{\R}$ lies over the $k$-similarity class of $[\overline{A}]_k$.
By \Cref{prop-A-regsem-irred}, these $\R$-similarity classes are also in $\im(\Phi_L)$; consequently all the monic lifts $F_i$ of $f$ is an $L$-power polynomial.
Hence, corresponding to each such conjugacy class of $\overline{A}\in\GL_n(k)$ (with irreducible characteristic polynomial) there exists $|\mathfrak{m}|^n$ distinct conjugacy classes in $\GL_n(R)\cap\im(\Phi_L)$.
Since the conjugacy classes of elements of $\GL_n(k)$ having an irreducible characteristic polynomial (of degree $n$) are determined by its characteristic polynomial, to count which of them belong to $\im(\overline{\Phi}_L)$, it is enough to count the number of $L$-power irreducible polynomials in $k[t]$.
This has been done in \cite[Proposition 3.3]{KunduSinghGL}. 
We recall that, if $N_{k,L}(q,d)$ denote the number of $L$-power polynomials of degree $d>1$ in $k=\Fq$, 
\begin{align*}
    N_{k,L}(q,d)=\frac{1}{d}\sum_{s|d}\mu(s)\frac{\gcd(L(q^{\frac{d}{s}}-1),(q^d-1))}{\gcd(L,q^s-1)},
\end{align*}
where $\mu$ is the M\"{o}bius function.
Since the $L$-power map $a\mapsto a^L$ is a homomorphism $\Fq^\times\longrightarrow\Fq^\times$, it follows that $N_{k,L}(q,1)=\dfrac{q-1}{(L,q-1)}$.
Hence, we have the following result concerning the count of monic fundamental irreducible polynomials in $\R[t]$.
\begin{lemma}\label{lem:count-fund-irred-L}
    Let $\R$ be a finite principal ideal ring of length two with unique maximal ideal $\mathfrak{m}$, and $N_{{\R}, L}(q,d)$ denote the number of $L$-power monic fundamental irreducible polynomials of degree $d$. Then
    \begin{align*}
        N_{{\R}, L}(q,d)=\begin{cases}
        \dfrac{q|\mathfrak{m}|-|\mathfrak{m}|}{\gcd(L,q-1)}&\text{ if }d=1\\
        \left(\dfrac{1}{d}\sum_{s|d}\mu(s)\dfrac{\gcd(L(q^{\frac{d}{s}}-1),(q^d-1))}{\gcd(L,q^s-1)}\right)|\mathfrak{m}|^d &\text{ if } d>1
        \end{cases},
    \end{align*}
    where $\mu$ is the M\"{o}bius function.
\end{lemma}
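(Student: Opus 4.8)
The plan is to reduce the count over $\R[t]$ to the already-known count over $k[t]$ by fibering the set of polynomials along the reduction map $\theta$. The central fact I would establish first is that a monic fundamental irreducible $F(t)\in\R[t]$ of degree $d$ is an $L$-power polynomial if and only if its reduction $f(t)=\overline{F}(t)$ is an $L$-power polynomial in $k[t]$. Granting this, the set of $L$-power monic fundamental irreducibles of degree $d$ in $\R[t]$ partitions, via $F\mapsto\overline{F}$, into fibers indexed by the $L$-power monic irreducibles of degree $d$ in $k[t]$, and it then remains only to show that each fiber has exactly $|\mathfrak{m}|^d$ elements.

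For the equivalence I would pass through the companion matrix. Setting $A=C_F$, the element $A\in\GL_n(\R)$ is regular semisimple with fundamental irreducible characteristic polynomial $F(t)$, so \Cref{prop:reg-sem-baby-case} gives that $A\in\im(\Phi_L)$ iff $F$ is an $L$-power polynomial, while \Cref{prop-A-regsem-irred} gives that $A\in\im(\Phi_L)$ iff $\overline{A}=C_f\in\im(\overline{\Phi}_L)$. On the field side, $C_f\in\im(\overline{\Phi}_L)$ is equivalent to $f$ being an $L$-power polynomial by \cite[Proposition 4.2]{KunduSinghGL}. Chaining these three statements yields precisely that $F$ is $L$-power iff $f$ is $L$-power; the degrees match automatically, since $\deg f=\deg F=d$ and $f$ irreducible forces $F$ to be fundamental irreducible by \Cref{lem:f-irred-F-irred}.

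To count the fiber over a fixed $L$-power monic irreducible $f(t)=t^d+b_{d-1}t^{d-1}+\cdots+b_0\in k[t]$, I would observe that a monic lift $F(t)=t^d+a_{d-1}t^{d-1}+\cdots+a_0$ is determined by choosing, independently for each $0\le i\le d-1$, a lift $a_i\in\R$ of $b_i$; each fiber of $\theta\colon\R\to k$ has cardinality $|\mathfrak{m}|$, so there are exactly $|\mathfrak{m}|^d$ monic lifts. Every such lift is automatically fundamental irreducible (its reduction is $f$) and, by the equivalence above, is $L$-power, while distinct lifts are distinct polynomials. Hence $N_{\R,L}(q,d)=|\mathfrak{m}|^d\,N_{k,L}(q,d)$. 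Substituting the recalled field counts---$N_{k,L}(q,1)=\frac{q-1}{\gcd(L,q-1)}$ when $d=1$ and $N_{k,L}(q,d)=\frac{1}{d}\sum_{s\mid d}\mu(s)\frac{\gcd(L(q^{d/s}-1),\,q^d-1)}{\gcd(L,q^s-1)}$ when $d>1$---gives the stated formula, the case $d=1$ simplifying as $|\mathfrak{m}|\cdot\frac{q-1}{\gcd(L,q-1)}=\frac{q|\mathfrak{m}|-|\mathfrak{m}|}{\gcd(L,q-1)}$.

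The only genuine content, and thus the step I would treat with care, is the equivalence that $F$ is $L$-power iff $\overline{F}$ is $L$-power. A priori, being an $L$-power polynomial is a condition on factorizations in $\R[t]$, where factorization is poorly behaved and need not be governed by the reduction modulo $\mathfrak{m}$; it is exactly the regular-semisimple Hensel-lifting results \Cref{prop:reg-sem-baby-case} and \Cref{prop-A-regsem-irred} that force this property to descend to and ascend from $k[t]$. Once that is in hand, the remainder is the routine fiber count described above, together with the separate bookkeeping for degree one, where the matrix corresponds to a unit $b_0\in k^\times$ and $L$-power-ness is just $L$-th-power-ness in $\Fq^\times$.
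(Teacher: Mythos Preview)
Your proposal is correct and takes essentially the same approach as the paper. The paper's argument, given in the discussion immediately preceding the lemma, likewise counts the $|\mathfrak{m}|^d$ monic lifts of each monic irreducible $f\in k[t]$, invokes \Cref{prop-A-regsem-irred} via the companion matrices to conclude that every such lift is an $L$-power polynomial precisely when $f$ is, and then multiplies by the known field count $N_{k,L}(q,d)$ from \cite{KunduSinghGL}; your write-up is in fact slightly more explicit about both directions of the equivalence.
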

For example, if $\R= \mathbb{Z}/9\mathbb{Z}$, $d=2$, and $L=2$, one has $N_{\mathbb{F}_3,2}(3,2)=1$, and hence $3^2 N_{\mathbb{F}_3,2}(3,2)=9$. 
Therefore, up to conjugacy, there are $9$ classes in $\GL_2(\R)$ that lie in $\im(\Phi_2)$ above the conjugacy class (in $\GL_2(\mathbb{F}_3)$) corresponding to the polynomial $t^2+1\in \mathbb{F}_3[t]$.
The corresponding monic irreducible characteristic polynomials of these $9$ distinct classes are obtained using SageMath \cite{SageMath} and are listed below in \Cref{tab:irred-F-f}.

\begin{table}[h!]
\centering
\begin{tabular}{|c|c|c|}
\hline
{$F(t)$} & {Irreducibility in $\mathbb{Z}/9\mathbb{Z}[t]$} & $f(t)$ \\
\hline
$t^2 + 4$         & Irreducible & $t^2 + 1$ \\
$t^2 + 7$         & Irreducible & $t^2 + 1$ \\
$t^2 + 1$         & Irreducible & $t^2 + 1$ \\
$t^2 - 6t + 4$    & Irreducible & $t^2 + 1$ \\
$t^2 - 6t + 7$    & Irreducible & $t^2 + 1$ \\
$t^2 - 6t + 1$    & Irreducible & $t^2 + 1$ \\
$t^2 - 3t + 4$    & Irreducible & $t^2 + 1$ \\
$t^2 - 3t + 7$    & Irreducible & $t^2 + 1$ \\
$t^2 - 3t + 1$    & Irreducible & $t^2 + 1$ \\
\hline
\end{tabular}
\caption{\centering Irreducibility of polynomials in $\R[t]=\mathbb{Z}/9\mathbb{Z}[t]$ and their images in $\mathbb{F}_3[t]$}
\label{tab:irred-F-f}
\end{table}

We must mention that a monic irreducible polynomial need not be a fundamental irreducible polynomial.
Let $F(t)=t^2+t+1\in \R[t]$, where $\R=\Z/9\Z$.
The reduction is $\overline{F}(t)=(t-1)^2\in k[t]$, so $F(t)$ is not a fundamental irreducible polynomial.
However, $F(t)$ is irreducible in $\R[t]$.
If possible, let $F(t)=F_1(t)F_2(t)$ where $F_i(t)$ are both non-nilpotent non-units in $\R[t]$.
Then both are lifts of $t-1$. 
Then $F_i(t)=t-1+m_i(t)$ for some $m_i(t)\in\mathfrak{m}[t]=3\Z/9\Z[t]$.
Then $t^2+t+1=(t-1+m_1(t))(t-1+m_2(t))$ leads to $3t=(t-1)(m_1(t)+m_2(t))$, which is not possible (plug $t=1$).

Let $N(q,d)$ denote the number of monic irreducible polynomials (other than $t$) in $\Fq[t]$ of degree $d$. 
Then it is known that
\begin{align*}
    N(q,d)=\begin{cases}
        q-1&\text{if }d=1\\
        \dfrac{1}{d}\left(\sum\limits_{r|d}\mu(r)q^{\frac{d}{r}}\right)&\text{otherwise}
    \end{cases};
\end{align*}
see \cite[Lemma 1.3.10]{FulmanNeumannPraeger2005}.
\begin{proposition}\label{prop:gen-func-reg-sem-class}
    Let $\R$ be a finite local principal ideal ring of length two, with unique maximal ideal $\mathfrak{m}$ and residue field $\mathbb{F}_q$.  
Let $cs_n$ denote the number of conjugacy classes of regular semisimple elements in $\GL_n(\R)$, and let $cs_{n,L}$ denote the number of conjugacy classes of regular semisimple elements in $\GL_n(\R) \cap \im(\Phi_L)$, where $\gcd(L,p)=1$.  
Then the following equalities hold.

    \begin{equation}\label{eq:conj-reg-sem}
    1+\sum\limits_{n=1}^{\infty}cs_nz^n=\prod\limits_{d\geq 1} (1+z^d)^{|\mathfrak{m}|^d\cdot N(q,d)},
    \end{equation}
    where $N(q,d)$ denotes the number of irreducible polynomial of degree $d$ in $\Fq[t]$, and
    \begin{equation}\label{eq:conj-reg-sem-L}
   1+\sum\limits_{n=1}^{\infty}cs_{n,L}z^n=\prod\limits_{d\geq 1}(1+z^d)^{N_{\R,L}(q,d)}.
    \end{equation}
\end{proposition}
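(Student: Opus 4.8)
The plan is to recast both identities \eqref{eq:conj-reg-sem} and \eqref{eq:conj-reg-sem-L} as instances of the classical Euler-product (exponential-formula) count for finite sets of weighted atoms, where the atoms are the monic fundamental irreducible polynomials of $\R[t]$. The whole argument hinges on first showing that the characteristic polynomial is a complete invariant for the $\R$-conjugacy class of a regular semisimple element, and then translating the enumeration of such classes into an enumeration of sets of fundamental irreducibles.

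First I would establish the invariant-and-factorization reduction. If $A \in \GL_n(\R)$ is regular semisimple then $\overline{\Char}_{\R,A}(t)$ is separable, so by \Cref{lem:tech-1-thm-1} the polynomial $\Char_{\R,A}(t)$ admits a factorization $\prod_{i=1}^{r} F_i(t)$ into distinct monic fundamental irreducibles, unique up to order. The proof of \Cref{thm:reg-sem-poly} then gives, via \Cref{lem:F-G-fund-irred} and the Chinese Remainder Theorem, that $A \sim_{\R} \operatorname{diag}(C_{F_1}, \ldots, C_{F_r})$; hence the class $[A]_\R$ both determines and is determined by the unordered set $\{F_1, \ldots, F_r\}$ of its fundamental irreducible factors. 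Consequently $cs_n$ equals the number of finite sets of distinct monic fundamental irreducibles of $\R[t]$ whose degrees sum to $n$, with each fundamental irreducible used at most once (a regular semisimple characteristic polynomial has no repeated fundamental irreducible factor).

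Next I would insert the count of atoms by degree and apply the product formula. The paragraph preceding the proposition shows that each monic irreducible $f \in k[t]$ of degree $d$ has exactly $|\mathfrak{m}|^d$ monic fundamental irreducible lifts in $\R[t]$; since there are $N(q,d)$ choices of $f$, there are $|\mathfrak{m}|^d N(q,d)$ monic fundamental irreducibles of degree $d$ in $\R[t]$. Letting $F$ range over all of these atoms and recording for each the binary choice ``omit $F$'' or ``include $F$,'' the generating function for finite sets graded by total degree factors as
$$
1 + \sum_{n \geq 1} cs_n z^n \;=\; \prod_{F} \bigl(1 + z^{\deg F}\bigr) \;=\; \prod_{d \geq 1} (1 + z^d)^{|\mathfrak{m}|^d N(q,d)},
$$
which is \eqref{eq:conj-reg-sem}; collecting the $|\mathfrak{m}|^d N(q,d)$ atoms of each fixed degree $d$ produces the exponent.

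For the $L$-power refinement I would invoke \Cref{thm:reg-sem-poly} directly: a regular semisimple $A$ lies in $\im(\Phi_L)$ exactly when every fundamental irreducible factor of $\Char_{\R,A}(t)$ is an $L$-power polynomial. Thus $cs_{n,L}$ counts finite sets of distinct monic $L$-power fundamental irreducibles of total degree $n$, so the atoms are now precisely these; by \Cref{lem:count-fund-irred-L} there are $N_{\R,L}(q,d)$ of them in each degree $d$, and the identical Euler-product argument yields
$$
1 + \sum_{n \geq 1} cs_{n,L} z^n \;=\; \prod_{d \geq 1}(1 + z^d)^{N_{\R,L}(q,d)},
$$
which is \eqref{eq:conj-reg-sem-L}. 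The step I expect to demand the most care is the first one: verifying that $[A]_\R \mapsto \{F_1, \ldots, F_r\}$ is a genuine bijection onto sets of distinct fundamental irreducibles. Since factorization in $\R[t]$ is not unique in general, one must lean on the separability of $\overline{\Char}_{\R,A}$ to legitimately apply the uniqueness in \Cref{lem:tech-1-thm-1}, and on the companion-block decomposition from \Cref{thm:reg-sem-poly} to recover the conjugacy class from the factor set. Once that correspondence is secured, the passage to the two product formulas is purely formal.
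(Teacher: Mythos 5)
Your write-up is a faithful reconstruction of the paper's own argument --- the same unique factorization via \Cref{lem:tech-1-thm-1}, the same companion-block canonical form to identify classes with characteristic polynomials, the same atom count $|\mathfrak m|^d N(q,d)$, and the same appeal to \Cref{thm:reg-sem-poly} and \Cref{lem:count-fund-irred-L} for the $L$-power refinement --- but the step you yourself single out as the delicate one is precisely where the argument as written breaks. The map $[A]_\R \mapsto \{F_1,\dots,F_r\}$ is well defined and injective, but it is \emph{not} surjective onto all finite sets of distinct monic fundamental irreducibles: distinctness of the lifts does not force distinctness of their reductions. Take $\R=\Z/9\Z$ and the set $\{t-1,\ t-4\}$: these are two distinct monic fundamental irreducibles, yet both reduce to $t-1\in\mathbb F_3[t]$, so their product reduces to $(t-1)^2$, which is not separable; correspondingly $\mathrm{diag}(1,4)$ reduces to the identity matrix, which is not regular semisimple. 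So a set $\{F_1,\dots,F_r\}$ lies in the image of your correspondence exactly when the reductions $\overline F_1,\dots,\overline F_r$ are pairwise distinct, and the atoms cannot be toggled independently as in your Euler product. The correct bookkeeping groups the $|\mathfrak m|^d$ lifts of each monic irreducible $f\in\Fq[t]$ together (choose a set of distinct $f$'s over the residue field, then one lift of each), which yields
\begin{equation*}
1+\sum_{n\geq 1}cs_n z^n=\prod_{d\geq 1}\bigl(1+|\mathfrak m|^d z^d\bigr)^{N(q,d)},
\qquad
1+\sum_{n\geq 1}cs_{n,L} z^n=\prod_{d\geq 1}\bigl(1+|\mathfrak m|^d z^d\bigr)^{N_{k,L}(q,d)},
\end{equation*}
where $N_{k,L}(q,d)=N_{\R,L}(q,d)/|\mathfrak m|^d$ (using, as you note via \Cref{prop-A-regsem-irred}, that every monic lift of an $L$-power $f$ is again $L$-power). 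These products agree with the displayed ones only through the coefficient of $z^1$ and diverge from $z^2$ onward: for $\R=\Z/9\Z$ a direct count gives $cs_2=3\cdot 3+9\cdot N(3,2)=9+27=36$, whereas the coefficient of $z^2$ in $(1+z)^{6}(1+z^2)^{27}$ is $\binom{6}{2}+27=42$; the six phantom classes are exactly the pairs of distinct lifts of a common linear polynomial over $\mathbb F_3$.

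In fairness, the paper's own proof is equally terse at this point --- it passes from ``the number of monic fundamental irreducibles of degree $d$ is $|\mathfrak m|^d N(q,d)$'' directly to \eqref{eq:conj-reg-sem} without imposing the pairwise-distinct-reduction constraint --- so your blind reconstruction matches the published argument step for step, including this leap. But since you explicitly assert that the bijection ``onto sets of distinct fundamental irreducibles'' is secured by separability and \Cref{lem:tech-1-thm-1}, the assertion needs correcting: separability of $\overline{\Char}_{\R,A}(t)$ restricts which sets occur, it does not deliver all of them. Everything else in your proposal --- injectivity via the block-companion form, the reduction of the $L$-power case to \Cref{thm:reg-sem-poly}, and the purely formal passage from a free choice of atoms to an Euler product --- is sound once the atoms are regrouped as above.
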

\begin{proof}
By definition, an element $A\in\GL_n(\R)$ is regular semisimple if $\overline{A}\in\GL_n(k)$ is regular semisimple.
Since $\overline{\Char_{\R,A}(t)}$ is separable (as $\overline{A}$ is regular semisimple), one gets that ${\Char_{\R,A}(t)}$ has a unique factorization into fundamental irreducible polynomials, using \Cref{lem:tech-1-thm-1}. 
It is enough to find the number of monic fundamental irreducible polynomials in $\R[t]$. 
Since $F(t)\in\R[t]$ is fundamental irreducible if it is irreducible in $\R[t]$ and $\theta(f)$ is irreducible in $\Fq[t]$, the number of monic fundamental irreducible polynomial in $\R[t]$ is $|\mathfrak{m}|^dN(q,d)$, using \Cref{lem:f-irred-F-irred}.
Hence \cref{eq:conj-reg-sem} follows.

Next, let $\Char_{\R,{A}}(t)=F(t)$, where $F(t)\in \R[t]$ is a monic polynomial such that $\overline{F}=f=\Char_{{k},\overline{A}}(t)$ is separable.
By \Cref{lem:tech-1-thm-1}, there exists a (unique) factorization of $F(t)$ in $\R[t]$ as $F(t)=F_1(t)F_2(t)\cdots F_r(t)$, where each $F_i(t)\in\R[t]$ is a monic fundamental irreducible polynomial, and $\overline{F}_i(t)=f_i(t)$. 
Furthermore, by \Cref{thm:reg-sem-poly}, $A\in\GL_n(\R)\cap\im(\Phi_L)$ if and only if each $F_i$ is an $L$-power polynomial. 
Then \cref{eq:conj-reg-sem-L} follows from \Cref{lem:count-fund-irred-L}
\end{proof}
We conclude this section with the proof of \Cref{thm:gen-fun} and \Cref{thm:gen-fun-cyc}.
The two results in the first theorem are analogous to the formulae in \cite[Section 2]{Wall1999Counting} and \cite[Theorem 5.3(1)]{KunduSinghGL}
\theoremthree*
\begin{proof}
    We use \Cref{prop:gen-func-reg-sem-class} here. 
    For a given regular semisimple element $A\in\GL_d(\R)$, with fundamental irreducible characteristic polynomial, by \Cref{lem:centra-reg-sem-R}, $|\Zen_{\GL_d(\R)}(A)|=|\mathfrak{m}|^d(q^d-1)$.
    This proves both of the equalities above.
\end{proof}
To prove \Cref{thm:gen-fun-cyc}, it is necessary to determine the size of the centralizer of a compatible cyclic element.  
For this, we first describe the structure of its centralizer algebra.  
The argument parallels that of \Cref{lem:centra-reg-sem-R}, but we include the details here for completeness.

\begin{lemma}\label{lem:centra-comp-cyc-R}
Let $\R$ be a finite local principal ideal ring of length two with its unique maximal ideal $\mathfrak{m}$. Let $A\in \M_n(\R)$ be a compatible cyclic element with $\Char_{\R, A}(t) =\prod\limits_{i=1}^\ell(F_i(t))^{r_i}$. Then, $\Zen_{\M_n(\R)}(A)\cong \bigoplus\limits_{i=1}^\ell \R[J_{\R,F_i}(r_i)]$ where $J_{\R,F_i}(r_i)$ are as in \Cref{prop-canonical-form-type}.
Moreover, if $F(t)\in \R[t]$ is a fundamental monic irreducible polynomial of degree $d$, $\left|\Zen_{\GL_{dr}(\R)}\left(J_{\R,F}(r)\right)\right|=|\mathfrak{m}|^{dr}q^{d(r-1)}(q^d-1)$.
\end{lemma}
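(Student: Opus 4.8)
The plan is to follow the template of \Cref{lem:centra-reg-sem-R} almost verbatim, replacing the distinct fundamental factors $F_i$ by the prime powers $F_i(t)^{r_i}$. Since $A$ is compatible cyclic, \Cref{lem:cyclic-down-up} together with the principality of the null ideal for cyclic elements gives an $\R[t]$-module isomorphism $M^A\cong \R[t]/\langle \Char_{\R,A}(t)\rangle=\R[t]/\langle \prod_{i=1}^\ell F_i(t)^{r_i}\rangle$, and the discussion preceding \Cref{lem:centra-reg-sem-R} yields $\Zen_{\M_n(\R)}(A)\cong \mathrm{End}_{\R[t]}(M^A)$. Thus the entire computation takes place inside the endomorphism ring of this cyclic module.

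First I would record that the ideals $\langle F_i(t)^{r_i}\rangle$ are pairwise comaximal. By the definition of a compatible cyclic element $\langle F_i,F_j\rangle=\R[t]$ for $i\neq j$, and comaximal ideals remain comaximal under powers (expand $1=(a+b)^{r_i+r_j-1}$ with $a\in\langle F_i\rangle$, $b\in\langle F_j\rangle$), so $\langle F_i^{r_i},F_j^{r_j}\rangle=\R[t]$. The Chinese Remainder Theorem then gives $M^A\cong\bigoplus_{i=1}^\ell \R[t]/\langle F_i(t)^{r_i}\rangle$. To split the endomorphism ring along this decomposition I would prove the analogue of \Cref{lem:triv-hom}, namely $\mathrm{Hom}_{\R[t]}(\R[t]/\langle F_i^{r_i}\rangle,\R[t]/\langle F_j^{r_j}\rangle)=0$ for $i\neq j$; the argument is identical, since for comaximal ideals $I,J$ any $\R[t]$-map sends $1$ to some $v+J$ with $Iv\subseteq J$, whence $(I+J)v=\R[t]\,v\subseteq J$ forces $v\in J$. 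Consequently $\mathrm{End}_{\R[t]}(M^A)\cong\bigoplus_{i=1}^\ell \mathrm{End}_{\R[t]}(\R[t]/\langle F_i^{r_i}\rangle)\cong\bigoplus_{i=1}^\ell \R[t]/\langle F_i^{r_i}\rangle$, using $\mathrm{End}_{\mathscr R}(\mathscr R/\mathscr I)\cong\mathscr R/\mathscr I$. Finally each $\R[t]/\langle F_i^{r_i}\rangle\cong \R[J_{\R,F_i}(r_i)]$: the polynomial $\Char_{\R,J_{\R,F_i}(r_i)}(t)=F_i(t)^{r_i}$ lies in the null ideal by \Cref{lem:cay-ham}, its reduction has degree $r_id_i=\deg\mathrm{Min}_{k,\overline{J_{\R,F_i}(r_i)}}(t)$ (the reduction is the field Jordan block $J_{k,f_i}(r_i)$, which is cyclic), so \Cref{lem:null-ideal-prin} makes the null ideal principal, generated by $F_i^{r_i}$. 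This establishes the first assertion.

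For the cardinality it suffices to treat a single block, since $\Zen_{\GL_{dr}(\R)}(J_{\R,F}(r))=(\R[J_{\R,F}(r)])^{\times}\cong S^{\times}$ where $S:=\R[t]/\langle F(t)^r\rangle$. I would use that $S$ is a free $\R$-module of rank $dr$ (as $F^r$ is monic of degree $dr$) and that reduction modulo $\mathfrak m$ gives a surjection $S\twoheadrightarrow \overline S:=k[t]/\langle f(t)^r\rangle$ with kernel $\mathfrak m S$. Since $\mathfrak m^2=0$, this kernel is nilpotent, so $s\in S$ is a unit if and only if its image in $\overline S$ is a unit; hence $|S^{\times}|=|\mathfrak m S|\cdot|\overline S^{\,\times}|$. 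Freeness gives $|\mathfrak m S|=|\mathfrak m|^{dr}$, while $\overline S$ is a finite local ring with maximal ideal $\langle f\rangle$ of index $q^d$ and residue field $\mathbb F_{q^d}$, so $|\overline S^{\,\times}|=q^{dr}-q^{d(r-1)}=q^{d(r-1)}(q^d-1)$. Multiplying yields $|\Zen_{\GL_{dr}(\R)}(J_{\R,F}(r))|=|\mathfrak m|^{dr}q^{d(r-1)}(q^d-1)$, as claimed.

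The routine steps mirror \Cref{lem:centra-reg-sem-R}; the only places demanding genuine care are the passage from comaximality of the $\langle F_i\rangle$ to that of their powers, and the verification that each block $J_{\R,F_i}(r_i)$ has principal null ideal generated by $F_i^{r_i}$, which is precisely where \Cref{prop-canonical-form-type} and the cyclicity of the reduction enter. I do not expect a serious obstacle beyond assembling these ingredients correctly.
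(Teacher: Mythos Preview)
Your proposal is correct and follows essentially the same route as the paper: reduce to $\mathrm{End}_{\R[t]}(M^A)$, split via the Chinese Remainder Theorem into the summands $\R[t]/\langle F_i^{r_i}\rangle\cong \R[J_{\R,F_i}(r_i)]$, and compute the order of the unit group by reducing modulo $\mathfrak m$ (the paper phrases this last step as a short exact sequence of unit groups, which amounts to your equality $|S^\times|=|\mathfrak m S|\cdot|\overline S^{\,\times}|$). The paper only writes out the case $\ell=1$ and leaves the decomposition implicit, whereas you spell out the comaximality of the powers $\langle F_i^{r_i}\rangle$ and the vanishing of the cross $\mathrm{Hom}$'s; these are exactly the details one would supply, and there is no substantive difference in approach.
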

\begin{proof}
We will prove the result for the case $\ell=1$.
Let $A\in GL_n(\R)$ be a compatible cyclic matrix, $\Char_{\R,A}(t)=F(t)^r$, where $F(t)$ is monic fundamental irreducible in $\mathcal{O}_2[t]$ of degree $d$ with reduction $\overline{F}(t)=f(t)$.
By \Cref{lem:tech-1-thm-1}, the null ideal $N_A=\langle F(t)^r\rangle$.
From the isomorphism $\Zen_{\M_n(\R)}\left(A\right)\cong \mathrm{End}_{\R[t]}(M^{A},M^{A})
 =\mathrm{Hom}_{\R[t]}( \R[t]/N_A, \R[t]/N_A)\cong \R[t]/N_A$, $N_A=\langle(F(t)^r)\rangle$, and \Cref{prop-canonical-form-type}, it follows that $\Zen_{\M_n(\R)}(A)= \R[A]\cong \R[J_{\R,F}(r)]$.

It remains to prove the second part of the statement.
In this regard, note that $\Zen_{\GL_{dr}(\R)}(A)=\R[A]^\times$.
Consider the following exact sequence of groups
$$ \begin{tikzcd}
	\mathbf0 & {\ker(\theta)} & {(\R[A])^{\times}} & {(k[\Bar{A}])^{\times}} & \mathbf 0
	\arrow[from=1-1, to=1-2]
	\arrow[from=1-2, to=1-3]
	\arrow["{\theta}", from=1-3, to=1-4]
	\arrow[from=1-4, to=1-5]
\end{tikzcd},$$
induced by $\theta$.
Then $$\ker\theta=\left\{\sum\limits_{i=0}^{rd-1}a_iA^i:a_i\in\mathfrak{m}\right\},$$ which further proves that $|\Zen_{\GL_n(\R)}(A)|=|\mathfrak{m}|^{rd}|(k[\Bar{A}])^{\times}|$.
\end{proof}
The following theorem may be viewed as an analogue of \cite[Theorem 5.3(2)]{KunduSinghGL}.
\theoremfour*
\begin{proof}
By definition, an element $A\in\GL_n(\R)$ is called compatible cyclic if $\overline{A}\in\GL_n(k)$ is cyclic, and the characteristic polynomials satisfy  
\[
\Char_{\R,A}(t)=\prod_{i=1}^{\ell}F_i(t)^{r_i}, 
\qquad 
\Char_{k,\overline{A}}(t)=\prod_{i=1}^{\ell}\overline{F}_i(t)^{r_i},
\]
where each $F_i$ is a monic fundamental irreducible polynomial, and $F_i$ and $F_j$ are coprime whenever $i\neq j$.

By \Cref{lem:snap-factor}, a factorization of $\Char_{\R,A}(t)$, as above, is unique. 
Since $F_i(t)\in\R[t]$ is fundamental irreducible if it is irreducible in $\R[t]$ and $\theta(F_i)$ is irreducible in $\Fq[t]$, the number of monic fundamental irreducible polynomial in $\R[t]$ is $|\mathfrak{m}|^dN(q,d)$, using \Cref{lem:f-irred-F-irred}.

We are given with $\Char_{\R,A}(t)=\prod\limits_{i=1}^{\ell}F_i(t)^{r_i}$, where each $F_i(t)\in\R[t]$ is a monic fundamental irreducible polynomial, and $\overline{F}_i(t)=f_i(t)$. 
Furthermore, by \Cref{thm:cyc}, $A\in\GL_n(\R)\cap\im(\Phi_L)$ if and only if each $F_i$ is an $L$-power polynomial. 

Let $cr_n$ denote the number of conjugacy classes of compatible cyclic elements in $\GL_n(\R)$, and let $cr_{n,L}$ denote the number of conjugacy classes of compatible cyclic elements in $\GL_n(\R) \cap \im(\Phi_L)$, where $\gcd(L,p)=1$.  Hence because of the above discussion,

    \begin{equation*}\label{eq:conj-comp-cyc}
    1+\sum\limits_{n=1}^{\infty}cr_nz^n=\prod\limits_{d\geq 1} (1-z^d)^{-|\mathfrak{m}|^d\cdot N(q,d)},
    \end{equation*}
    where $N(q,d)$ denotes the number of irreducible polynomial of degree $d$ in $\Fq[t]$, and
    \begin{equation*}\label{eq:conj-comp-cyc-L}
   1+\sum\limits_{n=1}^{\infty}cr_{n,L}z^n=\prod\limits_{d\geq 1}(1-z^d)^{-N_{\R,L}(q,d)}.
   \end{equation*}
Then the result follows in view of \Cref{lem:centra-comp-cyc-R}.
\end{proof}
\begin{remark}
A more general version of \Cref{lem:centra-comp-cyc-R} holds: for a cyclic matrix $A\in\GL_n(\R)$, one has $\Zen_{\M_n(\R)}(A)=\R[A]$.  
Using this, one can show that an element $A\in\GL_n(\R)$ lies in $\im(\Phi_L)$ if and only if $\overline{A}\in\GL_n(k)\cap\im(\overline{\Phi}_L)$; the proof being a verbatim replica of \Cref{prop:hens-root-full}.  
However, counting such elements is more difficult since we lack a factorization analogous to the compatible cyclic case.
\end{remark}
\section{Roots in $\GL_n(\R)$, roots in $\GL_n(k)$ and the hypothesis $\gcd(L,p)=1$}\label{sec:res-cond-L-p}
It is known \cite[Proposition 4.5]{KunduSinghGL} that a regular semisimple element $X \in \GL_n(\Fq)$ is an $L$-th power if and only if all the irreducible factors of its characteristic polynomial $\Char_{\Fq}(X)(t) \in \Fq[t]$ are $L$-power polynomials.
We have proved in \Cref{thm:reg-sem-poly} that an analogous statement holds for regular semisimple elements of $\GL_n(\R)$.
This naturally leads to the question: if $A \in \GL_n(\R)$ is such that $\theta(A) \in \GL_n(\Fq) \cap \im(\overline{\Phi}_L)$, must it follow that $A \in \im(\Phi_L)$?
The following example shows that this need not hold in general.


\begin{example}
We work with the ring $\R=\mathbb{Z}/9\mathbb{Z}$, $k\cong\mathbb{F}_3$, $L=3$ and $n=2$, so that $\gcd(L,p)=3\neq 1$. 
Consider the matrix $A= \begin{pmatrix}
 3 & 1 \\
5 & 0
\end{pmatrix} \in \GL_2(\mathbb{Z}/9\mathbb{Z})$. 
Applying the reduction map $\theta$ yields $\overline{A}= \begin{pmatrix}
 0 & 1 \\
2 & 0
\end{pmatrix}\in \GL_2(k) $.
We have $\Char_{\R,A}(t)=F(t)=t^2-3t-5\in \R[t]$, and, $\Char_{k,\overline{A}}(t)=f(t)=t^2+1\in k[t]$. 
The polynomials $F(t)$ and $f(t)$ are irreducible in $\R[t]$ and $k[t]$, respectively. 
Now, $\overline{A}^3=\begin{pmatrix}
    0 & 2\\ 1 & 0
\end{pmatrix}\sim_{k}\overline{A}$, and hence $\overline{A}\in \im(\overline{\Phi}_3)$. 
We claim that there does not exist any $B\in \GL_2(\R)$ such that $B^3=A$.

If possible, let there be $B\in \GL_2(\R)$ such that $B^3=A$. It can be shown that the order of $A$ is $\textrm{ord}(A)=12$, and hence
 $B^L=A$ implies $\textrm{ord}(B^L)=12$. 
 Hence $\dfrac{\textrm{ord}(B)}{\gcd(L,\textrm{ord}(B))}=12$ and further $ \textrm{ord}(B)=12\gcd(L,\textrm{ord}(B))$. 
Similarly, $\textrm{ord}(\overline{A})=4$. 
Therefore we have $\textrm{ord}(\overline{B}^L)=4$ which implies $ \textrm{ord}(\overline{B})=4\gcd(L,\textrm{ord}(\overline{B}))=4\ell$ where $\ell= \gcd(L,\textrm{ord}(\overline{B}))$.
Now $|\GL_2(\mathbb{F}_3)|= (3^2-1)(3^2-3)=48$. 
As $\textrm{ord}(\overline{B})|48$ and $\textrm{ord}(\overline{B})$ is a multiple of $4$, $\textrm{ord}(\overline{B})\in\{4,8,12,16,24,48\}$.
As $\GL_2(\mathbb{F}_3)$ is non-abelian, $\textrm{ord}(\overline{B})\neq 48$. 
Thus $\ell\in\{1,2,3,4,6\}$.
If $\textrm{ord}(\overline{B})\in\{8,16,24\}$, $L$ must be even, which is not possible; $L$ being $3$.
Hence  $\textrm{ord}(\overline{B})\in\{4,12\}$. 
Now we investigate the possible value(s) of the order of $B$.

Let $\textrm{ord}(B)=s$ then $B^s=I$. That implies $\overline{B}^s=\overline{I}.$ When $\textrm{ord}(\overline{B})=4$, $4|s$ implies $s=4y$ for some integer $y$. 
From the equality $\textrm{ord}(B)=12\gcd(L,\textrm{ord}(B))$, plugging the values of $\textrm{ord}(B)$ and $L$, we get $y=3\gcd(3,4y)$. 
If $\gcd(3,4y)=1$ then $y=3$ which cannot be possible because then $\gcd(3,4y)\neq1$. 
As $\gcd(3,4y)|3$ and $3$ is a prime number, the only possibility is $\gcd(3,4y)=3$. 
This implies $y=9$, and $\textrm{ord}(B)=s=36$.

When $\textrm{ord}(\overline{B})=12$ then $s=12z$ for some integer $z$. 
The equality $\textrm{ord}(B)=12\gcd(L,\textrm{ord}(B))$ implies $z=\gcd(3,12z)=3$, and so $\textrm{ord}(B)=s=36$.

So, if such a $B$ exists, $\textrm{ord}(B)$ should be $36$. 
As $B^3=A$, $B\in \Zen_{\GL_2(\R)}(A)$. 
So the cyclic group $\langle B\rangle$ generated by $B$ is a subgroup of order $36\subseteq\Zen_{\GL_2(\R)}(A)$. 
It is well known that $|\Zen_{\GL_2(\R)}(A)|=\dfrac{|\GL_2(\R)|}{|[A]_{\R}|}$, where $|[A]_{\R}|$ is the size of the conjugacy class of $A$ in $\GL_2(\R).$  
Decompose the matrix $A$ as 
$$A=\begin{pmatrix}
 3 & 1 \\
5 & 0
\end{pmatrix}=\begin{pmatrix}
 6 & 0 \\
0 & 6
\end{pmatrix}+\begin{pmatrix}
 -3 & 1 \\
5 & 3
\end{pmatrix}.$$ 
Since $$\begin{pmatrix}
 1 & -3 \\
0 & 5
\end{pmatrix}^{-1}\begin{pmatrix}
 -3 & 1 \\
5 & 3
\end{pmatrix}\begin{pmatrix}
 1 & -3 \\
0 & 5
\end{pmatrix}=\begin{pmatrix}
 0 & 5 \\
1 & 0
\end{pmatrix},$$ one has $A\sim_{\R} \begin{pmatrix}
 6 & 5 \\
1 & 6
\end{pmatrix}$.
Now $A'=\begin{pmatrix}
 6 & 5 \\
1 & 6
\end{pmatrix}$ is a member of $H'(\alpha,\beta,i)$ family. Here in particular for $A'$ we have $\alpha=6$, $\epsilon=5$, $i=0$, $\beta=1$; see \Cref{subsec:conj-gl}.
So, the size of conjugacy class of $A'$ in $\GL_2(\R)$ is $|[A']_{\R}|=(3-1)3^{4-1}=54$, \cite[p. 1291]{BarringtonCliffWen2010}, which is $|[A]_{\R}|$ as well.
Now let us calculate $|\GL_2(\R)|$.
Since $|\GL_2(\R)|=3888$, we get $|\Zen_{\GL_2(\R)}(A)|=\dfrac{|\GL_2(\R)|}{|[A]_{\R}|}=\dfrac{3888}{54}=72$.

From previous discussion, $\langle B\rangle \subseteq \Zen_{\GL_2(\R)}(A) $ and $|\langle B\rangle |=36$, which implies $\langle B\rangle $ is an index $2$ subgroup of $\Zen_{\GL_2(\R)}(A)$; hence normal in $\Zen_{\GL_2(\R)}(A)$. 
Let us consider the following action by conjugation
$\Zen_{\GL_2(\R)}(A) \times \langle B\rangle \longrightarrow \langle B\rangle$, by $(g,B)\mapsto g.B=gBg^{-1}.$
Take $g\in\Zen_{\GL_2(\R)}(A)$ and assume $gBg^{-1}=B^j$ for some $j$.
This gives $\textrm{ord}(B^j)=\textrm{ord}(gBg^{-1})=\textrm{ord}(B)$, and hence 
$\dfrac{\textrm{ord}(B)}{\gcd(j,\textrm{ord}(B))}=\textrm{ord}(B).$ Hence $\gcd(j,\textrm{ord}(B))=1.$
This happens for any $g\in \Zen_{\GL_2(\R)}(A)$.
Therefore the orbit of $B$ is $\Zen_{\GL_2(\R)}(A).B=\left\{B^j| \gcd(j,\textrm{ord}(B))=1\right\}$. 
Let us denote this by $\mathrm{Orbit}(B)$. 
So, $|\mathrm{Orbit}(B)|=\phi(36)=12$, where $\phi$ is the Euler's totient function.
Then, the Orbit-Stabilizer theorem immediately gives $|\mathrm{Stab}_{\Zen_{\GL_2(\R)}(A)}(B)|=\dfrac{|\Zen_{\GL_2(\R)}(A)|}{|\mathrm{Orbit}(B)|}=6$.

Note that $|\R^{\times}|=6$ and all the $\lambda I$'s such that $\lambda\in R^{\times}$ are in $\mathrm{Stab}_{\Zen_{\GL_2(\R)}(A)}(B)$. Moreover $B$ itself, which should not be of the form $\lambda I$, is also a member of $\mathrm{Stab}_{\Zen_{\GL_2(\R)}(A)}(B)$.
As $\mathrm{Stab}_{\Zen_{\GL_2(\R)}(A)}(B)\subseteq \Zen_{\GL_2(\R)}(A) $, therefore $|\mathrm{Stab}_{\Zen_{\GL_2(\R)}(A)}(B)|\geq 7$; which is a contradiction. So no such $B$ exists in $\GL_2(\R)$ such that $B^3=A$. Hence $A\not\in \im(\Phi_3)$. 
Hence, we have shown that the \ul{existence of an $L$-th root of $\overline{A}$ need not guarantee the existence of an $L$-th root of $A$ in $\GL_2(\R)$}. 
This establishes the claims we made in the beginning of the example.
\end{example}
Surprisingly, this phenomenon is reflected in the irreducible factors of $F(t^3)$, as we shall demonstrate below.

We check whether $F(t^3)$ has any monic irreducible factor of degree $2$ in $\R[t]$.
Suppose, for contradiction, that $F(t^3) = T_1(t) T_2(t)$ where $T_1(t)$ is a monic irreducible polynomial of degree $2$, and $T_2(t)$ is a polynomial of degree $4$ in $\R[t]$.
Note that $T_2$ cannot have a degree greater than $4$, because the leading coefficient of $T_1$ is a unit, and thus the leading term of $F(t^3)$ would then have degree strictly greater than $6$, contradicting the degree of $F(t^3)$.
Without loss of generality, we may assume that both $T_1$ and $T_2$ are monic; indeed, since the leading coefficient of $F$ is $1$, if $T_1$ and $T_2$ are not monic, one can apply the inverse of the leading coefficients to make them monic.
This gives $f(t^3)=\overline{T_1}(t)\overline{T_2}(t)=(t^2+1)^3$. As the degrees of $\overline{T_1}$ and $\overline{T}_2$ must be $2$ and $4$ respectively, and $\mathbb{F}_3[t]$ is a unique factorization domain, $\overline{T}_1(t)=t^2+1$ and $\overline{T}_2(t)=(t^2+1)^2$. 
Therefore $T_1(t)=t^2+1+m_1(t)$ and $T_2(t)=(t^2+1)^2+m_2(t)$ for some $m_1(t),m_2(t)\in\mathfrak{m}[t]$, with $\deg(m_i)<\deg(T_i)$ for $i=1,2$. 
Let $m_1(t)=a_1t+a_2$ and $m_2(t)=b_1t^3+b_2t^2+b_3t+b_4$ where $a_i,b_j\in\mathfrak{m}$ for $i=1,2$, $j=1,2,3,4$.
As $T_1(t)T_2(t)=t^6-3t^3-5\in \R[t]$, comparing the coefficients we obtain 
\begin{align}
\text{Comparing the coefficient of }t^5 &: b_1 + a_1 = 0 \label{eq:1} \\
\text{Comparing the coefficient of }t^4 &: 3 + b_2 + a_1 b_1 + a_2 = 0 \label{eq:2} \\
\text{Comparing the coefficient of }t^3 &: b_3 + 2a_1 + a_1 b_2 + b_1 + a_2 b_1 = -3 \label{eq:3} \\
\text{Comparing the coefficient of }t^2 &: 1 + b_4 + a_1 b_3 + 2 + b_2 + 2a_2 + a_2 b_2 = 0 \label{eq:4} \\
\text{Comparing the coefficient of }t &: a_1 + a_1 b_4 + b_3 + a_2 b_3 = 0 \label{eq:5} \\
\text{Constant term coefficient} &: (1 + a_2)(1+b_4) = -5 \label{eq:6}
\end{align}
From \cref{eq:6}, the possible $(a_2,b_4)$ pairs are as follows
\begin{table}[h]
\centering

\begin{tabular}{ccc}
\toprule 
\multicolumn{3}{c}{$(a_2, b_4)$ pairs} \\
\midrule 
$(0, 3)$ & $(3, 0)$ & $(6, 6)$ \\
\bottomrule 
\end{tabular}

\end{table}\\
When $a_2=0$, \cref{eq:1,eq:2} imply that $b_2=a_1^2-3$ and hence the possible $(a_1,b_1,b_2)$ tuple are as follows:
\begin{table}[h]
\centering

\begin{tabular}{ccc}
\toprule 
\multicolumn{3}{c}{$(a_1, b_1,b_2)$ tuple for $(a_2,b_4)=(0,3)$} \\
\midrule 
$(0, 0,6)$ & $(3, 6,6)$ & $(6,3, 6)$ \\
\bottomrule 
\end{tabular}

\end{table}\\
Similarly, one achieves 
\begin{table}[h]
\centering

\begin{tabular}{ccc}
\toprule 
\multicolumn{3}{c}{$(a_1, b_1,b_2)$ tuple for $(a_2,b_4)=(3,0)$} \\
\midrule 
$(0, 0,3)$ & $(3, 6,3)$ & $(6,3, 3)$ \\
\bottomrule 
\end{tabular},
\begin{tabular}{ccc}
\toprule 
\multicolumn{3}{c}{$(a_1, b_1,b_2)$ tuple for $(a_2,b_4)=(6,6)$} \\
\midrule 
$(0, 0,0)$ & $(3, 6,0)$ & $(6,3, 0)$ \\
\bottomrule 
\end{tabular}

\end{table}\\

From \cref{eq:3} we obtain the corresponding values of $b_3$ and we have achieved the following data set:
\begin{table}[H]
\begin{tabular}{|l|l|l|l|l|l|}
\hline
$a_1$ & $a_2$ & $b_1$ & $b_2$ & $b_3$ & $b_4$ \\ \hline
0    &      & 0    & 6    & 6    &      \\ \cline{1-1} \cline{3-5}
3    & 0    & 6    & 6    & 3    & 3    \\ \cline{1-1} \cline{3-5}
6    &      & 3    & 6    & 0    &      \\ \hline
0    &      & 0    & 3    & 6    &      \\ \cline{1-1} \cline{3-5}
3    & 3    & 6    & 3    & 3    & 0    \\ \cline{1-1} \cline{3-5}
6    &      & 3    & 3    & 0    &      \\ \hline
0    &      & 0    & 0    & 6    &      \\ \cline{1-1} \cline{3-5}
3    & 6    & 6    & 0    & 3    & 6    \\ \cline{1-1} \cline{3-5}
6    &      & 3    & 0    & 0    &      \\ \hline
\end{tabular}
\end{table}
If we consider the tuple $(a_1, a_2, b_1, b_2, b_3, b_4)$, then in the cases where $b_3 = 6$, the values fail to satisfy \cref{eq:4}, and in all other cases where $b_3 = 3$ or $0$, the values fail to satisfy \cref{eq:5}. Hence, \ul{no such $T_1(t)$ exists as a degree $2$ irreducible factor of $F(t^3)$}.

Next, we present an example to illustrate that the assumption $\gcd(L, p) = 1$ is essential for the validity of \Cref{thm:cyc}.
\begin{example}
    We work with the ring $\R=\mathbb{Z}/9\mathbb{Z}$, $k\cong\mathbb{F}_3$, $L=3$ and $n=2$, so that $\gcd(L,p)=3\neq 1$. 
Consider the matrix $A= \begin{pmatrix}
 1 & 1 \\
0 & 1
\end{pmatrix} \in \GL_2(\mathbb{Z}/9\mathbb{Z})$.
\end{example}
Since the matrix $\overline{A}=\begin{pmatrix}
    1 & 1\\ 0 & 1
\end{pmatrix}\in\GL_2(\mathbb F_3)$ is not a member of $\im(\overline{\Phi}_3)$, it follows that $A\not\in\im(\Phi_3)$.
But $\Char_{\R,A}(t)=(t-1)^2$. Setting $F(t)=t-1$, $F(t^3)$ has an irreducible factor of degree $1$. 
Thus, an analogous statement to \Cref{thm:cyc} need not hold when we omit the condition $\gcd(L,p)=1$.

\section{Concluding remarks}\label{res-conc-rem}
\subsection{On $L$-th powers in $\GL_n(\R)$ and $\GL_n(k)$}
As mentioned in the introduction, our motivation for this work arises from questions concerning word problems in matrix groups over a local principal ideal ring.
As demonstrated in \Cref{exm:down-not-up}, it is not always the case that a matrix $A \in \GL_n(\R)$ is an $L$-th power if and only if its reduction $\overline{A} \in \GL_n(k)$ is an $L$-th power.
However, this phenomenon does hold for all $A \in \GL_n(\R)$ such that $\overline{A} \in \GL_n(k)$ is either regular semisimple or cyclic.
It would therefore be desirable to classify all elements of $\GL_n(\R)$ that are an $L$-th power precisely when their $\mathfrak m$-reduction in $\GL_n(k)$ is an $L$-th power.
\subsection{A question of Ofir Gorodetsky
} Gorodetsky asked the following in 2018; see \cite{GorodetskyOverflow}:
\emph{``We have a structure theorem for finitely generated modules over $R$, whenever $R$ is a PID. In the case of $R=\mathbb{Z}/p\mathbb{Z}[x]$ ($p$ a prime), the structure theorem can be used to obtain the rational canonical form for matrices over the finite field $\mathbb{Z}/p\mathbb{Z}$. I am interested in some kind of canonical form for matrices over $\mathbb{Z}/p^k\mathbb{Z}$. Is there such a canonical form in the literature?
This question naturally leads to a more concrete one: What is known about the structure of f.g. modules over $\mathbb{Z}/p^k\mathbb{Z}[x]$?
If I have a matrix $A \in \mathrm{Mat}_n(\mathbb{Z}/p^k\mathbb{Z})$, the relevant $\mathbb{Z}/p^k\mathbb{Z}[x]$-module is the ``vector space'' $(\mathbb{Z}/p^k\mathbb{Z})^n$, on which $x$ acts as multiplication by $A$.”}

In the course of proving \Cref{thm:reg-sem-poly}, we have in fact obtained a canonical form for matrices whose reduction modulo $\mathfrak m$ is a regular semisimple element; this is implicit in the proof. Moreover, as presented in \Cref{sec:cyc}, we have established a canonical form for compatible cyclic matrices over $\R$. 
However, the same proof shows that these types of canonical forms can be achieved over a finite principal ideal ring $\mathscr O_\ell$ of length $\ell$; more precisely
\begin{enumerate}
    \item Let $A\in\M_n(\mathscr O_\ell)$ be regular semisimple (i.e. its mod-$\mathfrak m$ reduction in $\M_n(k)$ is regular semisimple). 
    Then $A$ is conjugate to a matrix of the form $\mathrm{diag}(C_{F_{1}},C_{F_2},\ldots,C_{F_r})\in\M_n(\mathscr O_\ell)$, where $F_i$s are unique monic fundamental irreducible factor of $\mathrm{Min}_{\mathscr O_\ell,A}(t)$, and
    \item Let $A\in\M_n(\mathscr O_\ell)$ be a compatible cyclic matrix (see \Cref{sec:cyc}). 
    Then $A$ is conjugate to a matrix of the form $\mathrm{diag}(J_{\mathscr O_\ell,F_{1}}(r_1),J_{\mathscr O_\ell,F_{2}}(r_2),\ldots,J_{\mathscr O_\ell,F_{s}}(r_s))$ where $\Char_{\mathscr O_\ell,A}(t)=\mathrm{Min}_{\mathscr O_\ell,A}(t)=\prod\limits_{i=1}^sF_i(t)^{r_i}\in\mathscr O_\ell[t]$.
\end{enumerate}
We hope this line of investigation can be pursued further to develop canonical forms for matrices of other types.

\section*{Declarations}


\noindent\textbf{Conflict of interest.} The authors declare that they have no conflict of interest.  

\noindent\textbf{Availability of data and materials.} Not applicable.  

\noindent\textbf{Code availability.} No codes were used during the preparation of the results.

\noindent\textbf{Ethics approval.} Not applicable.  

\noindent\textbf{Consent to participate.} Not applicable.  

\noindent\textbf{Consent for publication.} Not applicable. 

\noindent\textbf{Contribution statement.} All authors contributed equally to every aspect of this article. The order of authors is alphabetical by surname.
\printbibliography
\vspace{2em}
\end{document}